\documentclass[a4paper, 11pt]{amsart}   
\usepackage{mathptmx, amssymb,amscd,latexsym, eulervm}   
\usepackage{amsmath}
\usepackage{amsthm}
\usepackage{mathdots}
\usepackage[pagebackref,colorlinks=true,linkcolor=blue,urlcolor=blue]{hyperref}
\usepackage{color}
\usepackage[onehalfspacing]{setspace}
\usepackage{tabularx}
\usepackage{amsfonts}
\usepackage{paralist}
\usepackage{aliascnt}
\usepackage[initials, lite]{amsrefs}
\usepackage{amscd}
\usepackage{blkarray}
\usepackage{mathbbol}
\usepackage{setspace}
\usepackage[inner=2.4cm,outer=2.4cm, bottom=3.2cm]{geometry}
\usepackage{tikz, tikz-cd}
\usepackage{calligra,mathrsfs}

\usepackage{tikz}
\usetikzlibrary{matrix}
\usetikzlibrary{arrows,calc}
\allowdisplaybreaks


\BibSpec{collection.article}{%
	+{}  {\PrintAuthors}                {author}
	+{,} { \textit}                     {title}
	+{.} { }                            {part}
	+{:} { \textit}                     {subtitle}
	+{,} { \PrintContributions}         {contribution}
	+{,} { \PrintConference}            {conference}
	+{}  {\PrintBook}                   {book}
	+{,} { }                            {booktitle}
	+{,} { }                            {series}
	+{, vol.} { }                            {volume}
	+{,} { }                            {publisher}
	+{,} { \PrintDateB}                 {date}
	+{,} { pp.~}                        {pages}
	+{,} { }                            {status}
	+{,} { \PrintDOI}                   {doi}
	+{,} { available at \eprint}        {eprint}
	+{}  { \parenthesize}               {language}
	+{}  { \PrintTranslation}           {translation}
	+{;} { \PrintReprint}               {reprint}
	+{.} { }                            {note}
	+{.} {}                             {transition}
	+{}  {\SentenceSpace \PrintReviews} {review}
}
\AtBeginDocument{%
	\def\MR#1{}
}

\makeatletter
\@namedef{subjclassname@2020}{%
	\textup{2020} Mathematics Subject Classification}
\makeatother


\newcommand{\kk}{\mathbb{k}}

\newcommand{\bu}{\mathbf{u}}

\newcommand{\A}{\mathcal{A}}

\newcommand{\Pl}{\mathscr{P}_{\ls }}

\newcommand{\NN}{\normalfont\mathbb{N}}
\newcommand{\ZZ}{\normalfont\mathbb{Z}}
\newcommand{\gin}{{\normalfont\text{gin}}}

\newcommand{\PP}{{\normalfont\mathbb{P}}}

\newcommand{\mm}{{\normalfont\mathfrak{m}}}
\newcommand{\K}{{\mathcal{K}}}
\newcommand{\init}{{\normalfont\text{in}}}

\newcommand{\bbS}{\mathfrak{J}}
\newcommand{\QQ}{\mathbb{Q}}
\newcommand{\pp}{{\normalfont\mathfrak{p}}}

\newcommand{\nn}{{\normalfont\mathfrak{N}}}
\newcommand{\bn}{{\normalfont\mathbf{n}}}
\newcommand{\bm}{{\normalfont\mathbf{m}}}
\newcommand{\supp}{{\normalfont\text{supp}}}
\newcommand{\Msupp}{{\normalfont\text{MSupp}}}
\newcommand{\bx}{{\normalfont\mathbf{x}}}

\newcommand{\ttt}{{\normalfont\mathbf{t}}}
\newcommand{\www}{{\normalfont\mathbf{w}}}
\newcommand{\zzz}{{\normalfont\mathbf{z}}}

\newcommand{\nnn}{{\normalfont\mathfrak{n}}}
\newcommand{\fP}{{\mathfrak{P}}}
\newcommand{\fJ}{{\mathfrak{J}}}

\newcommand{\reg}{\normalfont\text{reg}}
\newcommand{\ttop}{ {\normalfont\text{top}} }

\newcommand{\codim}{{\normalfont\text{codim}}}

\newcommand{\Ker}{\normalfont\text{Ker}}

\newcommand{\Quot}{\normalfont\text{Quot}}

\newcommand{\Sym}{\normalfont\text{Sym}}
\newcommand{\Rees}{\mathcal{R}}

\newcommand{\ee}{{\normalfont\mathbf{e}}}

\newcommand{\OO}{\mathcal{O}}

\newcommand{\LL}{\mathbb{L}}

\newcommand{\C}{\mathscr{C}}

\newcommand{\HH}{\normalfont\text{H}}

\newcommand{\St}{\normalfont\text{St}}

\newcommand{\Hilb}{{\normalfont\text{Hilb}}}
\newcommand{\Spec}{\normalfont\text{Spec}}

\newcommand{\multProj}{\normalfont\text{MultiProj}}
\newcommand{\msupp}{\normalfont\text{MSupp}}
\newcommand{\hsupp}{\normalfont\text{HSupp}}

\newcommand{\exc}{\hyperref[exch]{Exchange} }
\newcommand{\expa}{\hyperref[expa]{Expansion} } 
\newcommand{\excs}{\hyperref[exch]{Exchange} }



\def\fu{\mathbf{u}}
\def\f0{\mathbf{0}}

\def\fe{\mathbf{e}}
\def\fw{\mathbf{w}}
\def\fv{\mathbf{v}}

\def\fs{\mathbf{s}}
\def\bb{\mathbf{b}}

\def\fa{\mathbf{a}}

\def\fn{\mathbf{n}}
\def\1{\mathbf{1}}
\def\ls{\leqslant}
\def\gs{\geqslant}

\def\ba{\mathbf{a}}
\def\bc{\mathbf{c}}
\def\be{\mathbf{e}}
\def\bw{\mathbf{w}}
\def\bv{\mathbf{v}}
\def\bx{\mathbf{x}}

\def\bb{\mathbf{b}}
\def\bc{\mathbf{c}}

\def\ba{\mathbf{a}}

\newtheorem{theorem}{Theorem}[section]

\newtheorem{headthm}{Theorem}

\newaliascnt{headcor}{headthm}

\aliascntresetthe{headcor}

\newaliascnt{headconj}{headthm}

\aliascntresetthe{headconj}

\newaliascnt{corollary}{theorem}
\newtheorem{corollary}[corollary]{Corollary}
\aliascntresetthe{corollary}

\newaliascnt{claim}{theorem}

\aliascntresetthe{claim}

\newaliascnt{lemma}{theorem}
\newtheorem{lemma}[lemma]{Lemma}
\aliascntresetthe{lemma}

\newaliascnt{conjecture}{theorem}
\newtheorem{conjecture}[conjecture]{Conjecture}
\aliascntresetthe{conjecture}

\newaliascnt{proposition}{theorem}
\newtheorem{proposition}[proposition]{Proposition}
\aliascntresetthe{proposition}

\theoremstyle{definition}
\newaliascnt{definition}{theorem}
\newtheorem{definition}[definition]{Definition}
\aliascntresetthe{definition}

\newaliascnt{notation}{theorem}
\newtheorem{notation}[notation]{Notation}
\aliascntresetthe{notation}

\newaliascnt{example}{theorem}
\newtheorem{example}[example]{Example}
\aliascntresetthe{example}

\newaliascnt{examples}{theorem}

\aliascntresetthe{examples}

\newaliascnt{remark}{theorem}
\newtheorem{remark}[remark]{Remark}
\aliascntresetthe{remark}

\newaliascnt{fact}{theorem}
\newtheorem{fact}[fact]{Fact}
\aliascntresetthe{fact}

\newaliascnt{question}{theorem}
\newtheorem{question}[question]{Question}
\aliascntresetthe{question}

\newaliascnt{questions}{theorem}

\aliascntresetthe{questions}

\newaliascnt{problem}{theorem}

\aliascntresetthe{problem}

\newaliascnt{construction}{theorem}

\aliascntresetthe{construction}

\newaliascnt{setup}{theorem}
\newtheorem{setup}[setup]{Setup}
\aliascntresetthe{setup}

\newaliascnt{algorithm}{theorem}

\aliascntresetthe{algorithm}

\newaliascnt{observation}{theorem}

\aliascntresetthe{observation}

\newaliascnt{defprop}{theorem}

\aliascntresetthe{defprop}

\def\equationautorefname~#1\null{(#1)\null}
\def\sectionautorefname~#1\null{Section #1\null}
\def\subsectionautorefname~#1\null{\S #1\null}

\def\trdeg{{\rm trdeg}}



\title{$K$-polynomials of multiplicity-free varieties}

\author{Federico Castillo}
\address[Castillo]{Departamento de Matem\'aticas, Pontificia Universidad Cat\'olica de Chile, Santiago, Chile}
\email{federico.castillo@mat.uc.cl}

\author{Yairon Cid-Ruiz}
\address[Cid-Ruiz]{Department of Mathematics, North Carolina State University, Raleigh, NC 27695, USA}
\email{ycidrui@ncsu.edu}

\author{Fatemeh Mohammadi}
\address[Mohammadi]{Department of Mathematics, KU Leuven, Celestijnenlaan 200B, Leuven, Belgium, and Department of Computer Science, KU Leuven, Celestijnenlaan 200A, 3001 Leuven, Belgium, and
	Department of Mathematics and Statistics,
	UiT – The Arctic University of Norway, 9037 Troms\o, Norway}
\email{fatemeh.mohammadi@kuleuven.be}

\author{Jonathan Monta\~no}
\address[Monta\~no]{School of Mathematical and Statistical Sciences, Arizona State University, AZ, USA}
\email{montano@asu.edu}

\begin{document}

\keywords{multiplicity-free varieties, K-polynomials, multiprojective schemes, shellability, polymatroids, M\"obius functions, saturated Newton polytopes, Grothendieck polynomials.}
\subjclass[2020]{Primary 14C17, 14M05, 14M15, 13H15, 52B40, 52B22.}

	\maketitle

	\begin{abstract}
	We describe the twisted $K$-polynomial of multiplicity-free varieties in a multiprojective setting.
	More precisely, for multiplicity-free varieties, we show that the support of the twisted $K$-polynomial is a generalized polymatroid.
	As applications, we show that the support of the M\"obius function of a linear polymatroid is a generalized polymatroid and we settle a conjecture of Monical, Tokcan and Yong regarding Grothendieck polynomials  for the case of zero-one Schubert polynomials.
	\end{abstract}

		\section{Introduction}

		Let $\kk$ be a field, $\PP = \PP_\kk^{m_1} \times_\kk \cdots \times_\kk \PP_\kk^{m_p}$ be a multiprojective space, and $X \subset \PP$ be a subvariety. 
		Following Brion \cite{BRION_MULT_FREE}, we say that $X \subset \PP$ is \emph{multiplicity-free} if the corresponding class $[X]$ in the Chow ring $A^*(\PP) \cong \ZZ[t_1,\ldots,t_p]/\big(t_1^{m_1+1},\ldots,t_p^{m_p+1}\big)$ is a linear combination only admitting  coefficients $0$ or $1$, of the classes of   $\PP_\kk^{a_1}\times_\kk \cdots \times_\kk \PP_\kk^{a_p} \subset \PP$ with $a_1+ \cdots+ a_p = \dim(X)$.
		As proved by Brion \cite{BRION_MULT_FREE}, these varieties have a number of surprising and desirable properties (e.g., they are arithmetically Cohen-Macaulay and normal, and they admit a flat degeneration to a reduced union of products of projective spaces).
		The family of multiplicity-free varieties is large enough that it has become of remarkable importance in applications in algebraic geometry, commutative algebra, representation theory, and combinatorics (see, e.g., \cite{aholt2013hilbert, BRION_FLAG, PERRIN_SPHERICAL, knutson2009frobenius_point, BRION_MULT_FREE, YONG_SCHUBERT_CALC, ESCOBAR_KNUTSON, knutson2009frobenius, PERRIN_LECTURES, li2013images, CS_PAPER,CDNG_CS_IDEALS,CDNG_GIN,CDNG_GRAPH,CDNG_MINORS, caminata2022multidegrees}).
		
		
		The main goal of this paper is to provide a precise description of the class $[X]$ of a multiplicity-free variety $X \subset \PP$ in the Grothendieck ring $K(\PP) \cong \ZZ[t_1,\ldots,t_p]/\big((t_1-1)^{m_1+1},\ldots,(t_p-1)^{m_p+1}\big)$ of coherent sheaves on $\PP$.
		We write the Hilbert series of $X$ as $\Hilb_X(\ttt) = \mathcal{K}(X;\ttt) / (1-t_1)^{m_1+1} \cdots (1-t_p)^{m_p+1}$.
		Following \cite{KNUTSON_MILLER_SCHUBERT}, we say that $\mathcal{K}(X;\ttt)$ is the \emph{$K$-polynomial of $X$} and we call $\mathcal{K}(X;\zzz) = \mathcal{K}(X;1-z_1,\ldots,1-z_p)$ 
		 the \emph{twisted $K$-polynomial of $X$}.
		The coefficients of $\K(X;\zzz)$ correspond to the expression of $[X] \in K(\PP)$ as a linear combination of the classes of $\OO_{\PP_{\kk}^{a_1} \times_\kk \cdots \times_\kk \PP_\kk^{a_p}}$ with $\PP_{\kk}^{a_1} \times_\kk \cdots \times_\kk \PP_\kk^{a_p} \subset \PP$.
		 Our main result is the following theorem.
		
			\begin{headthm}[\autoref{main}]
				\label{thmA}
			Let $X \subset \PP=\PP_\kk^{m_1} \times_{\kk} \cdots \times_{\kk} \PP_\kk^{m_p}$ be a multiplicity-free  variety. 
			Then the support of $\mathcal{K}(X;\zzz)$ is a generalized  polymatroid. 
			Furthermore, we have the specific description that $\bn = (n_1,\ldots,n_p) \in \supp\left(\mathcal{K}(X;\zzz)\right)$ if and only if the following inequality holds
			$$
			\codim_\fJ\left(X, \PP\right) \;\le\; \sum_{j \in \fJ} n_j \;\le\;  b_\fJ(X, \PP)
			$$
			for any nonempty subset $\fJ \subseteq \{1,\ldots,p\}$.
		\end{headthm}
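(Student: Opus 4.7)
The plan is a three-step reduction: degenerate $X$ to a combinatorial union of coordinate subvarieties, compute its twisted $K$-polynomial by inclusion–exclusion on the cover, and then identify the support of the resulting polynomial with the generalized polymatroid cut out by the stated inequalities.

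\emph{Step 1 (degeneration).} By Brion's theorem \cite{BRION_MULT_FREE}, every multiplicity-free $X \subset \PP$ admits a flat degeneration to a reduced union $X_0 = \bigcup_{\ba \in \mathcal{B}} Y_\ba$ of coordinate products $Y_\ba \cong \PP_\kk^{a_1} \times_\kk \cdots \times_\kk \PP_\kk^{a_p}$, indexed by the dimension vectors $\ba \in \mathcal{B}$ appearing in the multidegree of $X$. Since the Hilbert series---and hence the twisted $K$-polynomial---is preserved under flat families, I would replace $X$ by $X_0$ and work with this combinatorial object. In particular, $\mathcal{B}$ is a polymatroid base, a fact established for multiplicity-free varieties in the authors' prior work and in \cite{caminata2022multidegrees}; this structural input will drive the combinatorics of the remaining steps.

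\emph{Step 2 (inclusion–exclusion).} The twisted $K$-polynomial of a single coordinate subvariety $Y_\ba$ is the monomial $\prod_{j=1}^p z_j^{m_j - a_j}$. Intersections of the $Y_\ba$ are themselves coordinate subvarieties (componentwise minimum), so the Mayer–Vietoris / Čech complex associated with the cover $\{Y_\ba\}_{\ba \in \mathcal{B}}$ is exact and yields
$$
\K(X_0;\zzz) \;=\; \sum_{\emptyset \ne \fS \subseteq \mathcal{B}} (-1)^{|\fS|+1} \prod_{j=1}^p z_j^{\,m_j - \min_{\ba \in \fS} a_j}.
$$
Collecting terms by exponent vector turns this into a Möbius-type sum over the meet-semilattice generated by $\mathcal{B}$ under componentwise minimum, indexed by the possible lower envelopes of subsets of $\mathcal{B}$.

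\emph{Step 3 (polymatroid identification).} Using the shellability of the simplicial complex built from $\mathcal{B}$ (polymatroid complexes being shellable/vertex-decomposable), I would evaluate the Möbius sum of Step 2 in a sign-controlled way, showing that the coefficient of $\prod_j z_j^{n_j}$ is nonzero precisely when $\bn$ lies in the generalized polymatroid defined by $\codim_\fJ(X,\PP) \le \sum_{j\in\fJ} n_j \le b_\fJ(X,\PP)$ for every nonempty $\fJ$. The lower inequalities recover the multidegree support $\mathcal{B}$ as the lowest-degree piece of $\K(X;\zzz)$. The upper inequalities reflect projection: applying the $\fJ$-coordinate projection $\pi_\fJ \colon \PP \to \prod_{j\in\fJ} \PP_\kk^{m_j}$ and tracking the $K$-polynomial under pushforward bounds the admissible exponents along coordinates in $\fJ$ from above by $b_\fJ(X,\PP)$.

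\emph{Main obstacle.} The technical heart is the non-cancellation argument in Step 3: one must prove that for every lattice point $\bn$ satisfying both families of inequalities, the corresponding Möbius sum is genuinely nonzero. This requires exhibiting a convenient shelling of the polymatroid complex of $\mathcal{B}$ (or an equivalent sign-definite Möbius formula on the meet-semilattice) and then matching the resulting support with the exact bounds, especially the more subtle upper bounds $b_\fJ(X,\PP)$ arising from the projection analysis, where the interplay between the lattice structure on $\mathcal{B}$ and the fibers of $\pi_\fJ$ has to be tracked carefully.
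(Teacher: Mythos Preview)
Your Steps 1 and 2 align with the paper: it too degenerates $X$ (via the multigraded generic initial ideal, \autoref{thm_gin_mult_free}) to a reduced union of coordinate subspaces indexed by the polymatroid $\mathcal{B}=\msupp_\PP(X)$, and it too uses an inclusion--exclusion formula (\autoref{lem_general_IE}, \autoref{shell_prop}) refined by shellability to express $\K(X;\zzz)$ as a sign-alternating sum over monomials. So the skeleton is right.

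The genuine gap is in Step 3. You correctly identify non-cancellation as the heart of the matter, but ``shellability of the polymatroid complex'' is not by itself a mechanism for proving that the surviving support is a g-polymatroid. Shellability gives you sign alternation (the paper records this as \autoref{cor_positivity_shellable}) and a path-connectedness statement (\autoref{cor_path_connected}), but neither yields the Exchange and Expansion axioms of \autoref{def_generalized_polymatroid}. The paper's solution is substantially more elaborate than your sketch suggests: it first proves that \emph{any} lex order on $\mathcal{B}$ induces a shelling (\autoref{lemma_Lex}), then uses this to write $\hsupp_\PP(X)$ as a union of explicit ``stalactites'' hanging from each $\ba\in\mathcal{B}$ (\autoref{lem_stalactite_mult_free}). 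The key structural idea you are missing is an \emph{induction on $\dim(X)$ via truncation}: cutting $X$ by a generic hyperplane in the $i$-th factor (\autoref{prop_mult_free_statements}(i)) corresponds to truncating $\hsupp_\PP(X)$ at $\ee_i$, and the result is again multiplicity-free. This is abstracted into the notion of a ``cave'' (\autoref{def_caves}), a set whose nontrivial truncations are already known to be g-polymatroids and whose stalactite decomposition holds for every lex order; the purely combinatorial \autoref{thm_glue_poly} then shows every cave is a g-polymatroid. Without this inductive/truncation framework, there is no evident way to pass from the inclusion--exclusion formula to the g-polymatroid axioms.

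Your treatment of the bounds is also too vague. The lower bounds $\codim_\fJ$ do come from the multidegree support via \cite{POSITIVITY}, as you say, but the upper bounds $b_\fJ(X,\PP)$ are not obtained by tracking $K$-polynomials under pushforward along $\Pi_\fJ$. Rather, once one knows the support is a g-polymatroid, Frank's inequality description reduces the problem to computing $\max\{\sum_{j\in\fJ}n_j:\bn\in\supp(\K(X;\zzz))\}$, i.e.\ the degree of $\K(X;\zzz)$ in the variables $z_j$, $j\in\fJ$; the paper computes this in \autoref{prop_deg_twisted_K_poly} by substituting $z_j\mapsto z'$ for $j\in\fJ$ and $z_j\mapsto z''$ for $j\notin\fJ$, using sign alternation to rule out cancellation under the substitution, and then reading off the answer from the Hilbert series of the bigraded section ring $\mathcal{S}_\fJ(X)$.
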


		We now explain the notation in \autoref{thmA}.
		For a given subset $\fJ = \{j_1,\ldots,j_k\} \subseteq [p] = \{1,\ldots,p\}$, we introduce the following notation. 
		We consider the natural projection 
		$$
		\Pi_\fJ :   \PP_\kk^{m_1} \times_\kk \cdots \times_\kk \PP_\kk^{m_p} \rightarrow \PP_\kk^{m_{j_1}} \times_\kk \cdots \times_\kk \PP_\kk^{m_{j_k}}.
		$$
		The codimension of $\Pi_\fJ(X)$ in $\Pi_\fJ(\PP)$ is denoted as 
		$
		\codim_\fJ\left(X, \PP\right) = \codim\left(\Pi_\fJ(X), \Pi_\fJ(\PP)\right). 
		$
		We also define a certain bounding number $b_\fJ(X, \PP)$; see \autoref{nota_bounds} for the precise definition.
		When $\fJ = [p]$, we have the 
		expression $b_{[p]}(X, \PP) = \codim(X, \PP) + \reg\left(\mathcal{S}(X)\right)$, where the second summand denotes the Castelnuovo-Mumford regularity of the graded section ring 
		$$
		\mathcal{S}(X)  \;=\; \bigoplus_{u \in \NN} \left[\mathcal{S}(X)\right]_{u} \quad\text{ where }\quad \left[\mathcal{S}(X)\right]_{u} \;=\; \bigoplus_{(v_1,\ldots,v_p) \in \NN^p, \;\, \sum_{j\in [p]}v_j=u } \HH^0\left(X, \OO_X(v_1,\ldots,v_p)\right).
		$$
		If we write $\K(X;\zzz) = \sum_{\bn \in \NN^p} \, c_\bn(X) \zzz^\bn$,
		then $X \subset \PP$ is multiplicity-free when $c_\bn(X) \in \{0,1\}$ for all $\bn \in \NN^p$ with $|\bn| = \codim(X, \PP)$.
		The support of $\mathcal{K}(X;\zzz)$ is given by $\supp\left(\mathcal{K}(X;\zzz)\right) = \lbrace \bn \in \NN^p \mid c_\bn(X) \neq 0 \rbrace$.
			

		The problem of studying the support of the class of $X\subset \PP $  in $A^*(\PP)$
		 is relatively well-understood. 
		In the projective case (i.e., $p=1$), this problem is vacuous as  there is only one positive coefficient to consider (i.e., the degree of $X \subset \PP_\kk^m$). 
		In the biprojective case (i.e., $p = 2$) we have the works of Trung \cite{trung2001positivity} and Huh \cite{Huh12}. 
		In \cite{POSITIVITY}, we have explicitly described which multidegrees of $X$ are positive in terms of the dimensions of the natural projections of $X$, and we have shown that  they form a polymatroid. 
		Moreover, due to the work of Br\"and\'en and Huh \cite{HUH_BRANDEN}, we have that the volume polynomial of $X$ (a polynomial encoding  the class $[X] \subset A^*(\PP)$)  is Lorentzian.
		
		On the other hand, much less is known regarding the support of the class of $X \subset \PP$ in the Grothendieck ring $K(\PP)$. 
		For the projective case (i.e., $p =1$), in \autoref{subsect_Marley}, we give a simple extension of Marley's work \cite{marley1989coefficients} on the coefficients of the Hilbert-Samuel polynomial of a zero-dimensional ideal in a Noetherian local ring. 
		As a consequence, in \autoref{cor_pos_Marley}, we obtain that the class $[X] \in K(\PP_{\kk}^m)$ of an arithmetically Cohen-Macaulay subvariety $X \subset \PP_\kk^m$ can be written as an alternating sign linear combination of the classes of $\PP_{\kk}^a \subset \PP_{\kk}^m$ and that the corresponding support forms a segment containing all the intermediate points (i.e., it is a generalized polymatroid).
		An extension of Marley's work is not possible in general for the case $p \ge 2$ (see \autoref{rem_Marley_no_multigrad}).
		For the entire multiprojective case (i.e, any $p \ge 1$), under the assumption that $X \subset \PP$ has rational singularities, another work of Brion \cite{BRION_POSITIVE}  provides the natural extension of the aforementioned sign alternation statement (see also the works of  Buch \cite{BUCH} and Anderson, Griffeth and Miller \cite{ANDERSON_GRIFFETH_MILLER}).
		In \autoref{cor_positivity_shellable}, we settle this sign alternation result for varieties with a square-free Gr\"obner degeneration whose corresponding simplicial complex is shellable.
		We point out that multiplicity-free varieties have rational singularities and their multigraded generic initial ideals yield shellable simplicial complexes.
		To the best of  our knowledge,  \autoref{thmA} is the first result that addresses the following question: 
	\begin{changemargin}{.7cm}{.7cm} 
		\emph{Given a subvariety $X \subset \PP$, when do we have that $\supp\left(\K(X;\zzz)\right)$ is a generalized polymatroid?}
	\end{changemargin}

		
		In \autoref{sect_results_mult_free}, we present additional results for multiplicity-free varieties. 
		There, we slightly change the notation as it is most convenient to work in multiprojective geometric terms.
		For each $\bn = (n_1,\ldots,n_p) \in \NN^p$, we denote by $\deg_\PP^\bn(X)$ the corresponding \emph{Hilbert coefficient} (see \autoref{sect_recap_multdeg}), when $n_1+\cdots+n_p = \dim(X)$ we have that $\deg_\PP^\bn(X)$ coincides with the usual notion of \emph{multidegree}.
		For a multiplicity-free variety $X \subset \PP$, 
		we prove the following results:
		\begin{enumerate}[\rm (i)]
			\item In \autoref{lem_eq_Hilb_pol_funct_mult_free}, we show that the Hilbert polynomial equals the Hilbert function everywhere in $\NN^p$. 
			\item In \autoref{lemma_Lex}, we prove that any lexicographical ordering of the points in $\msupp_\PP(X) = \lbrace \bn \in \NN^p \mid |\bn| = \dim(X) \text{ and }  \deg_\PP^\bn(X) \neq 0\rbrace$ induces a shelling on the simplicial complex corresponding to the multigraded generic initial ideal (which is square-free for multiplicity-free varieties, see \autoref{thm_gin_mult_free}).
			This is an important technical result for us. 
			\item In \autoref{lem_stalactite_mult_free}, we show 
			that the whole Hilbert polynomial of $X$ is determined by $\msupp_\PP(X)$, and we provide an algorithmic process to compute it. 
			\item \autoref{prop_mult_free_statements} contains several technical results.
			For instance,  part (iv) gives an alternative proof of a result of Knutson \cite{knutson2009frobenius} yielding a M\"obius-like formula: for any $\bn \in \NN^p$ such that there exists $\bw \in \msupp_\PP(X)$ with $\bw \ge \bn$, we obtain the equality 
			$
			\sum_{\bw \ge \bn} \deg_\PP^\bw(X) \;=\; 1.
			$
			\item In \autoref{thm_hsupp_polymatroid}, we show that $\hsupp_\PP(X) = \lbrace \bn \in \NN^p \mid \deg_\PP^\bn(X) \neq 0 \rbrace$ is a generalized polymatroid.
		\end{enumerate}
		Our results in \autoref{sect_results_mult_free} rely on the ones obtained in \autoref{sect_shellable} and \autoref{sect_combinatorial}.
		\autoref{sect_shellable} contains several results regarding the Hilbert polynomial of a square-free monomial ideal such that the corresponding simplicial complex is shellable (see \autoref{shell_prop}, \autoref{cor_positivity_shellable}, \autoref{cor_Mobius_shellable_ideals} and \autoref{cor_path_connected}).
		\autoref{sect_combinatorial} provides a gluing result 
		for generalized polymatroids that allows us to conclude that $\hsupp_\PP(X)$ is a generalized polymatroid (see \autoref{thm_glue_poly}); this is the most involved 
		combinatorial part of our work and for better exposition we introduce certain combinatorial terms: the {\it stalactites} and  the {\it caves}.

		\medskip

		We now present some applications of our main result: \autoref{thmA}.
		
		\medskip
		
		\emph{Schubert} and {\it Grothendieck polynomials} were introduced by Lascoux and Sch\"utzenberger \cite{SCUBERT_POLY_L_S, LASCOUX_SCHUTZENGERGER} (for precise definitions, see \autoref{subsect_Groth_poly}).
		These polynomials have played a central role in algebraic combinatorics (see,~e.g.,~\cite{fink2018schubert, monical2019newton,BERGERON_BILLEY,BILLEY_STANLEY, KNUTSON_MILLER_SUBWORD, KNUTSON_MILLER_SCHUBERT}).
		In \cite{monical2019newton}, Monical, Tokcan and Yong made conjectures on the Newton polytopes of several combinatorially defined polynomials.
		In particular, they conjectured that Schubert and Grothendieck polynomials have the saturated Newton polytope property (see \cite[\S 5]{monical2019newton}).
		These conjectures have been settled in the positive for Schubert polynomials: 
		in \cite{fink2018schubert}, it was shown that the support of an ordinary Schubert polynomial is a polymatroid;
		in \cite{DOUBLE_SCHUBERT}, it was shown that the support of a double Schubert polynomial is also a polymatroid.
		More recently,  Huh--Matherne--M\'esz\'aros--St.~Dizier \cite{HuhLogarithmic} and M\'esz\'aros--Setiabrata--St.~Dizier \cite{meszaros2022support} conjectured that the support of a Grothendieck polynomial is a generalized polymatroid, and they proved it for a special class of permutations called Grassmannian.
		Here we settle another important case.

\begin{headthm}[\autoref{thm:grothendieck}]
	\label{thmB}
	Let $w \in S_p$  be a permutation such that $\mathfrak{S}_w$ is a zero-one Schubert polynomial, then the support $\supp(\mathfrak{G}_{w})$ of the Grothendieck polynomial $\mathfrak{G}_{w}$ is a generalized polymatroid. 
\end{headthm}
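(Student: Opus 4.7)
The strategy is to derive \autoref{thmB} from \autoref{thmA} by realizing $\mathfrak{G}_w$ as the twisted $K$-polynomial of a multiplicity-free subvariety in multiprojective space. The bridge is the identification, due to Knutson and Miller \cite{KNUTSON_MILLER_SCHUBERT}, between the Grothendieck polynomial of a permutation and the $K$-polynomial of the associated matrix Schubert variety.

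Concretely, first I would endow the polynomial ring $R = \kk[x_{ij}]_{1\le i,j\le p}$ with the $\ZZ^p$-grading $\deg(x_{ij}) = \ee_j$, so that the Schubert determinantal ideal $I_w$ becomes multigraded homogeneous. The corresponding subvariety $Y_w \subset \PP = \PP_\kk^{p-1} \times_\kk \cdots \times_\kk \PP_\kk^{p-1}$ (with $p$ factors) is irreducible and Cohen-Macaulay by the classical results of Fulton, and so it is a bona fide multiprojective subvariety to which \autoref{thmA} applies. Next, by Knutson-Miller, the multigraded Hilbert series of $R/I_w$ has the form
$$
\Hilb_{R/I_w}(\ttt) \;=\; \frac{\mathfrak{G}_w(1-t_1,\dots,1-t_p)}{\prod_{j=1}^{p}(1-t_j)^p},
$$
so in the paper's notation $\K(Y_w;\zzz) = \mathfrak{G}_w(\zzz)$; moreover, the multidegree polynomial (i.e., the top-degree part of $\K(Y_w;\zzz)$) is exactly the Schubert polynomial $\mathfrak{S}_w$.

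With this dictionary in hand, the zero-one hypothesis on $\mathfrak{S}_w$ translates directly into $c_\bn(Y_w) \in \{0,1\}$ for every $\bn \in \NN^p$ with $|\bn| = \codim(Y_w, \PP)$, which is precisely the definition of $Y_w \subset \PP$ being multiplicity-free. Then \autoref{thmA} asserts that $\supp(\K(Y_w;\zzz))$ is a generalized polymatroid, and the identification $\K(Y_w;\zzz) = \mathfrak{G}_w(\zzz)$ finishes the proof.

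The main obstacle is essentially bookkeeping: one must carefully match the sign and variable conventions between the Knutson-Miller formula for $\mathfrak{G}_w$ and the definition of the twisted $K$-polynomial used in this paper (the substitution $\ttt \leftrightarrow 1-\zzz$), and verify that the multigraded $K$-theoretic content of $R/I_w$ coincides with that of the multiprojective variety $Y_w$ in the sense needed by \autoref{thmA}. Once this dictionary is in place, multiplicity-freeness follows from the zero-one hypothesis by inspection, and \autoref{thmA} does all the remaining work.
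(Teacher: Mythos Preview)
Your proposal is correct and follows essentially the same approach as the paper: identify $\mathfrak{G}_w$ with the twisted $K$-polynomial of the matrix Schubert variety via Knutson--Miller, observe that the zero-one hypothesis on $\mathfrak{S}_w$ is precisely the multiplicity-free condition, and then invoke \autoref{thmA}. The only cosmetic difference is that the paper uses the row grading on $\kk[x_{i,j}]$ (so $\deg(x_{i,j})=\ee_i$) rather than the column grading you wrote, but this is immaterial to the argument.
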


	Our proof of \autoref{thmB} follows by combining \autoref{thmA} and the work of Knutson and Miller \cite{KNUTSON_MILLER_SCHUBERT}.
	By \cite[Theorem A]{KNUTSON_MILLER_SCHUBERT}, the Schubert polynomial $\mathfrak{S}_w$ equals the multidegree polynomial $\mathcal{C}(\overline{X}_w;\zzz)$ and the Grothendieck polynomial $\mathfrak{G}_w$ coincides with the twisted $K$-polynomial $\mathcal{K}(\overline{X}_w;\zzz)$. 
	Here $\overline{X}_w$ denotes the \emph{matrix Schubert variety} of $w$ which  can be seen as a subvariety of the product of projective spaces $\big(\PP_{\kk}^{p-1}\big)^p$.
	If $\mathfrak{S}_w$ is a zero-one Schubert polynomial (see \cite{FINK_MESZAROS_DIZIER}), then $\overline{X}_w$ is a multiplicity-free variety.
	Notice that \autoref{thmA} also provides defining inequalities for the Newton polytope of the Grothendieck polynomial $\mathfrak{G}_w$.
	The description of these inequalities is in terms of algebraic invariants of the corresponding matrix Schubert variety $\overline{X}_w$.
	It would be interesting to find  combinatorial analogues of these inequalities.
	In particular, the global upper bound (i.e., when $\fJ = [p]$) involves the regularity of the matrix Schubert variety $\overline{X}_w$ which has been recently studied in \cite{pechenik2021castelnuovo} from a combinatorial perspective.
	
	We point out that our general approach under shellability assumptions (as in \autoref{sect_shellable}) could be used to study arbitrary Grothendieck polynomials.
	For instance, since the initial ideal of the \emph{Schubert determinantal ideal} $I_w$ with respect to the antidiagonal order is square-free and the corresponding simplicial complex is shellable (see \cite[\S 16.5]{miller2005combinatorial}), \autoref{cor_positivity_shellable} gives yet another proof of the alternation of sign of the coefficients in Grothendieck polynomials (also, see \autoref{cor_Mobius_shellable_ideals} and \autoref{cor_path_connected} for further possible consequences).
	We restrict ourselves to the case of multiplicity-free varieties because the corresponding multigraded generic initial ideals are square-free and their associated primes are in bijection with the positive multidegrees (see \autoref{rem_corr_prime_mult}).
	This latter fact is a stepping stone for our combinatorial considerations in \autoref{sect_combinatorial}.


	\emph{Polymatroids} are fundamental objects in the areas of combinatorics and optimization (see \cite[Chapter 12]{HERZOG_HIBI} and \cite[Part IV]{schrijver2003combinatorial}).
	They are a 
	 natural extension of \emph{matroids}. 
	Polymatroids  
	 have also been studied  under the names of \emph{M-convex sets} \cite{MUROTA} and  \emph{generalized permutohedron} \cite{POSTNIKOV}.
	There are 
	two distinguished classes of  polymatroids, \emph{linear} and {\it algebraic polymatroids}, whose main properties are inherited by  their respective representations. 
	By utilizing \autoref{thmA}, we obtain interesting consequences for linear polymatroids. 
	The general idea is that we can use multiplicity-free varieties as a model to study linear polymatroids.
	Indeed, from the work of Li \cite{li2013images}, for a given linear polymatroid $\mathscr{P}$ on $[p]$ we can find a \emph{multiview variety}  $X \subset \PP = \PP_{\kk}^{m_1} \times_\kk \cdots \times_\kk \PP_\kk^{m_p}$ (a special type of multiplicity-free variety) that satisfies $\mathscr{P} = \msupp_\PP(X)$ (see \autoref{prop_linear_polymatroid}).
	When $\mathscr{P}$ is a matroid on $[p]$, we can assume that $X$ is  embedded in the product $\big(\PP_\kk^1\big)^p$ of projective lines (see \cite{ARDILA_BOOCHER}). 
	Our main result in this direction is the following theorem regarding the M\"obius function of the partially ordered set associated to a polymatroid.

\begin{headthm}[\autoref{thm:linear_polymatroid}]
	\label{thmC}
Let  $ \mathscr{P} $ be a linear polymatroid.
Consider  the set 
$$
\mathscr{P}_{\le} \;=\; \{ \fu\in\NN^p ~\mid~ \fu \ls \fv \text{ for some }   \fv\in \mathscr{P}\}
$$ 
and the partially ordered set $\Gamma$ on it given by componentwise ordering, with a maximum $\hat{1}$ added.
Let $\mu_\Gamma$ be the M\"obius function of $\Gamma$.
Then the set
\[
	\mu\text{\rm-supp}(\mathscr{P}) \;=\; \left\{\fu \in \mathscr{P}_\leq ~\mid~ \mu_\Gamma(\fu,\hat{1}) \neq 0 \right\}
\]
is a generalized polymatroid.
\end{headthm}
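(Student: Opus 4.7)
The plan is to derive \autoref{thmC} from \autoref{thmA} by identifying $\mu\text{-supp}(\mathscr{P})$ with an affine reflection of the support of the twisted $K$-polynomial of a multiplicity-free model of $\mathscr{P}$. By \autoref{prop_linear_polymatroid} we realize $\mathscr{P}$ as $\msupp_\PP(X)$ for a multiview variety $X \subset \PP = \PP_\kk^{m_1} \times_\kk \cdots \times_\kk \PP_\kk^{m_p}$, which is multiplicity-free. Writing $\mathbf{m} = (m_1, \ldots, m_p)$, \autoref{thmA} yields that $\supp(\K(X; \zzz))$ is a generalized polymatroid in $\NN^p$. Since the map $\bn \mapsto \mathbf{m} - \bn$ sends generalized polymatroids to generalized polymatroids (the defining sub- and supermodular bounds swap roles under negation and translate under a vector shift), it suffices to establish the identification
$$\mu\text{-supp}(\mathscr{P}) \;=\; \bigl\{\mathbf{m} - \bn \,:\, \bn \in \supp(\K(X; \zzz))\bigr\}. \qquad (\star)$$

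To prove $(\star)$, the plan is to compute $\K(X;\zzz)$ via the flat degeneration and then compare with the defining recursion of $\mu_\Gamma$. By Brion's theorem, $X$ degenerates flatly to the reduced union $Y = \bigcup_{\bv \in \mathscr{P}} Y'_\bv$, where each $Y'_\bv$ is the coordinate multi-projective subspace of $\PP$ of type $\bv$, i.e., $Y'_\bv \cong \PP_\kk^{v_1} \times_\kk \cdots \times_\kk \PP_\kk^{v_p}$; flatness gives $[\OO_X] = [\OO_Y]$ in $K(\PP)$. Iterating Mayer--Vietoris, and using that the scheme-theoretic intersection of a subcollection of the $Y'_\bv$'s is the coordinate subspace of type equal to the componentwise minimum, one obtains
$$[\OO_X] \;=\; \sum_{\emptyset \neq T \subseteq \mathscr{P}} (-1)^{|T|+1} \bigl[\OO_{Y'_{\min T}}\bigr] \;=\; \sum_{\fu \in \mathscr{P}_\le} a_\fu \bigl[\OO_{Y'_\fu}\bigr],$$
with $a_\fu := \sum_{\min T = \fu}(-1)^{|T|+1}$. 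A direct calculation shows $\K(Y'_\fu; \zzz) = \zzz^{\mathbf{m} - \fu}$, so $\K(X; \zzz) = \sum_{\fu \in \mathscr{P}_\le} a_\fu\, \zzz^{\mathbf{m} - \fu}$, and therefore $\mathbf{m} - \supp(\K(X;\zzz)) = \{\fu \in \mathscr{P}_\le : a_\fu \neq 0\}$.

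It remains to show $\mu_\Gamma(\fu, \hat 1) = -a_\fu$ for every $\fu \in \mathscr{P}_\le$, from which $(\star)$ follows at once. For any such $\fu$ the set $\mathscr{P}^\fu := \{\bv \in \mathscr{P} : \bv \ge \fu\}$ is nonempty, and swapping the order of summation gives
$$\sum_{\fv \in \mathscr{P}_\le,\, \fv \ge \fu} a_\fv \;=\; \sum_{\emptyset \neq T \subseteq \mathscr{P}^\fu} (-1)^{|T|+1} \;=\; 1,$$
by the identity $\sum_{T \subseteq S}(-1)^{|T|} = 0$ for $S \neq \emptyset$. On the other hand, the relation $\sum_{\fu \le \fv \le \hat 1} \mu_\Gamma(\fv, \hat 1) = 0$ defining the M\"obius function yields $\sum_{\fv \in \mathscr{P}_\le,\, \fv \ge \fu} \mu_\Gamma(\fv, \hat 1) = -1$. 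Hence the function $f(\fv) := \mu_\Gamma(\fv, \hat 1) + a_\fv$ has vanishing up-set sums on the finite poset $\mathscr{P}_\le$, and M\"obius inversion on $\mathscr{P}_\le$ forces $f \equiv 0$, i.e., $\mu_\Gamma(\fu, \hat 1) = -a_\fu$.

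The main obstacle is justifying the Mayer--Vietoris inclusion--exclusion in $K(\PP)$ with the advertised intersections: one must verify that in the multiplicity-free degeneration of $X$ the components $Y'_\bv$ intersect scheme-theoretically in coordinate multi-projective subspaces of types exactly the componentwise minima. This should follow from the shellability of the multigraded GIN (cf.\ \autoref{lemma_Lex} and \autoref{thm_gin_mult_free}) together with the bijection between its associated primes and $\mathscr{P}$ (\autoref{rem_corr_prime_mult}), but careful bookkeeping is required to ensure reducedness and correct codimension of the iterated intersections.
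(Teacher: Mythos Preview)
Your proof is correct and follows essentially the same route as the paper. The paper realizes $\mathscr{P}=\msupp_\PP(X)$ via \autoref{prop_linear_polymatroid}, invokes \autoref{thm_hsupp_polymatroid} (equivalently \autoref{thmA} through \autoref{cor_transl_hilb_K}) to get that $\hsupp_\PP(X)$ is a g-polymatroid, and identifies $\hsupp_\PP(X)$ with $\mu\text{-supp}(\mathscr{P})$ via \autoref{thm_deg_mobius}, whose content is exactly your recursion argument $\sum_{\fv\ge\fu}a_\fv=1$ (this is \autoref{prop_mult_free_statements}(iii)). Your ``main obstacle'' is not an obstacle: the associated primes of the square-free $\gin_>(\fP)$ are the Borel-fixed primes $\pp_{\bm-\bv}$ for $\bv\in\mathscr{P}$ (\autoref{rem_corr_prime_mult}), and $\pp_{\ba_1}+\cdots+\pp_{\ba_k}=\pp_{\max\{\ba_i\}}$, so the scheme-theoretic intersections are automatically the reduced coordinate subspaces of type $\min\{\bv_i\}$ as you need; this is precisely how \autoref{lem_Hilb_poly_mult_free} and \autoref{lem_eq_Hilb_pol_funct_mult_free} proceed.
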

We expect the result of \autoref{thmC} to hold for arbitrary polymatroids (see \autoref{conjecture}).

	\medskip
	
	\noindent
	\textbf{Outline.} 
	The structure of the paper is as follows.
	In \autoref{sect_prelim}, we fix the notation and recall some results that are needed throughout the rest of the paper.
	\autoref{sect_motivation} presents two results that initially motivated our approach: \autoref{subsect_Marley} contains an extension of Marley's work \cite{marley1989coefficients}, and \autoref{subsect_Brion} provides a short account of Brion's positivity result \cite{BRION_POSITIVE} restricted to a multiprojective setting.
	In \autoref{sect_shellable}, we study square-free monomial ideals whose corresponding simplicial complexes are shellable.
	We provide our combinatorial approach for gluing generalized polymatroids in \autoref{sect_combinatorial}.
	Our technical results regarding multiplicity-free varieties are proved in \autoref{sect_results_mult_free}.
	Finally, \autoref{sect_proof_main} contains the proofs of our main results: \autoref{thmA}, \autoref{thmB}, and \autoref{thmC}.

	\section{Preliminaries}
	\label{sect_prelim}
	
	\noindent\textbf{Notation.} We first fix the notation that is used throughout the article.  
	We let $[p]:=\{1,\ldots,p\}$ for  $p\in \NN$ and we denote the standard basis vectors of $\mathbb{R}^n$ by $\be_1,\ldots,\be_p$. 
	Given any vector $\ba\in \mathbb{R}^p$, we denote $[\ba]_i$ for its $i^{\rm th}$ coordinate and set $\supp(\ba):=\{i\mid [\ba]_i\neq 0\}$. 
	We also denote $|\ba|$ for the $1$-norm of $\ba$, i.e.,~$\sum_{i=1}^p [\ba]_i$. 
	For a pair of vectors $\ba$ and $\bb$ we write $\ba\leq \bb$ if $[\ba]_i\leq [\bb]_i$ for all $i$.  
	We denote by $\mathbf{0}$ the vector $(0,\ldots, 0)\in \NN^p$.

	Following \cite{monical2019newton}, we say that a polynomial $f = \sum_{\bn}c_{\bn}\zzz^\bn\in \ZZ[z_1,\ldots,z_p]$ has the \emph{Saturated Newton Polytope property} (SNP property for short) if the support  $\supp(f)=\{\bn\in\NN^p\mid c_\bn \neq 0\}$ of $f$ is equal to $\text{Newt}(f)\cap\NN^p$, where $\text{Newt}(f) := \text{ConvexHull}\{\bn\in\NN^p\mid c_\bn \neq 0\}$ denotes the \emph{Newton polytope} of $f$; in other words, if the support of $f$ consists of the integer points of a polytope.

	\subsection{A short recap on multidegrees}
		\label{sect_recap_multdeg}
		
		Here we briefly recall the notion of multidegrees and some of its basic properties (for more details the reader is referred to \cite{miller2005combinatorial, cidruiz2021mixed}).
		
		Let $p >0$ be a positive integer, $\kk$ be a field and $S = \kk[x_1,\ldots,x_n]$ be a standard graded $\NN^p$-graded polynomial ring (i.e., $\deg(x_i) \in \NN^p$ and $|\deg(x_i)| = 1$ for all $i \in [p]$).  
		Let $M$ be a finitely generated $\ZZ^p$-graded $S$-module and $F_\bullet$ be a $\ZZ^p$-graded free $S$-resolution 
		$
		F_\bullet : \; \cdots \rightarrow F_i \rightarrow F_{i-1} \rightarrow \cdots \rightarrow F_1 \rightarrow F_0
		$
		of $M$.
		Let $t_1,\ldots,t_p$ be variables over $\ZZ$ and consider the polynomial ring $\ZZ[\ttt] = \ZZ[t_1,\ldots,t_p]$, where the variable $t_i$ corresponds with the $i$-th elementary vector $\ee_i \in \ZZ^p$. 
		If we write $F_i = \bigoplus_{j} S(-\mathbf{b}_{i,j})$ with $\mathbf{b}_{i,j} = (\mathbf{b}_{i,j,1},\ldots,\mathbf{b}_{i,j,p}) \in \ZZ^p$, then we define the Laurent polynomial 
		$
		\left[F_i\right]_\ttt \, := \, \sum_{j} \ttt^{\mathbf{b}_{i,j}} = \sum_{j} t_1^{\mathbf{b}_{i,j,1}} \cdots t_p^{\mathbf{b}_{i,j,p}}.
		$
		The \emph{Hilbert function} of $M$ is denoted by $h_M : \ZZ^p \rightarrow \NN$ with  $h_M(\fv) := \dim_\kk\left(\left[M\right]_\fv\right)$ for any $\fv \in \ZZ^p$.
		
		\begin{definition}
			Under the above notation, we have the following notions: 
			\begin{enumerate}[\rm (i)]
				\item The \emph{K-polynomial} of $M$ is given by 
				$
				\mathcal{K}(M;\ttt) \, := \, \sum_{i} {(-1)}^i \left[ F_i \right]_\ttt.
				$
				\item The {\it twisted $K$-polynomial} of $M$ is defined as $\K(M;\zzz) = \K(M;1-z_1,\ldots,1-z_p)$, and we write $\K(M;\zzz) = \sum_\bn c_\bn(M) \,\zzz^\bn$.
				\item The \emph{multidegree polynomial} of $M$ is the homogeneous polynomial $\mathcal{C}(M; \zzz) \in \ZZ[\zzz]$ given as the sum of all terms in 
				$
				\mathcal{K}(M; \zzz)
				$
				of smallest degree, which is equal to $\codim(M) = n - \dim(M)$.
			\end{enumerate}
		\end{definition}
		
		
		Another approach to study multidegrees is via the use of multigraded Hilbert polynomials.
		For each $i \in [p]$, suppose that $S$ has $m_i+1$ variables with degree equal to $\ee_i$.
		Thus we have $\multProj(S) = \PP := \PP_\kk^{m_1} \times_\kk \cdots \times_\kk \PP_\kk^{m_p}$.
		We denote the (multigraded) \emph{Hilbert polynomial} of $M$ by $P_M(\ttt)=P_M(t_1,\ldots,t_p) \in \QQ[\ttt]=\QQ[t_1,\ldots,t_p]$.
		Then, the degree of $P_M$  equals the dimension $\dim(\widetilde{M})$ of the correspondent coherent sheaf $\widetilde{M}$ on $\PP$ and it satisfies 
		$
		P_M(\fv) = h_M(\fv) 
		$
		for all $\fv \in \NN^p$ such that $\fv \gg \mathbf{0}$.
		Furthermore, if we write 
		\begin{equation*}
			\label{eq_Hilb_poly}
			P_{M}(\ttt) \;=\; \sum_{\bn\in \NN^p} e_\bn(M)\binom{t_1+n_1}{n_1}\cdots \binom{t_p+n_p}{n_p},
		\end{equation*}
		then $e_\bn(M)\in \ZZ$ for all $\bn$ and $e_\bn(M) \ge 0$ when $|\bn| = \dim\big(\widetilde{M}\big)$.	
		
		\begin{definition}
			For any $\bn \in \NN^p$, we say that $e_\bn(M)$ is the \emph{Hilbert coefficient of $M$ of type $\bn$}.
			In particular, when $|\bn| = \dim(\widetilde{M})$, we say that $e_\bn(M)$ is the \emph{mixed multiplicity of $M$ of type $\bn$}.
		\end{definition}
		
		Let $J \subset S$ be an $S$-homogeneous ideal that is saturated with respect to the multigraded irrelevant ideal of $S$, and consider the standard $\NN^p$-graded $\kk$-algebra $R = S/J$.
		Let 
		$$
		X = \multProj(R)\subset \PP = \PP_\kk^{m_1} \times_\kk \cdots \times_\kk \PP_\kk^{m_p} = \multProj(S)
		$$ 
		be the corresponding closed subscheme.
		The \emph{$K$-polynomial} and the \emph{multidegree polynomial} of $X$ are given by $\mathcal{K}(X;\ttt) := \mathcal{K}(R;\ttt)$ and  $\mathcal{C}(X;\ttt) := \mathcal{C}(R;\ttt)$, respectively.
		Similarly, the \emph{Hilbert polynomial} of $X$ is given by $P_X(\ttt) := P_R(\ttt)$.
		
		\begin{definition}
			Under the above notation, we have the following notions: 
			\begin{enumerate}[\rm (i)]
				\item For any $\bn \in \NN^p$, we say that $\deg_\PP^\bn(X):=e_\bn(R)$ is the \emph{Hilbert coefficient of $X$ of type $\bn$}.
				In particular, when $|\bn| = \dim(X)$, we say that $\deg_\PP^\bn(X)$ is the \emph{multidegree of $X$ of type $\bn$}.
				\item We say that the \emph{Hilbert support of $X$} is given by the set 
				$$
				\hsupp_\PP(X) \;:=\; \big\{\bn \in \NN^p  \mid \deg_\PP^\bn(X) \neq 0 \big\}.
				$$
				\item We say that the \emph{multidegree support of $X$} is given by the set 
				$$
				\msupp_\PP(X) \;:=\; \big\{\bn \in \NN^p  \mid |\bn| = \dim(X) \text{  and } \deg_\PP^\bn(X) \neq 0 \big\}.
				$$
			\end{enumerate}
		\end{definition}

		Under the above notation, we have the equality 
		$$
		\mathcal{C}(X;\zzz) \; = \; \sum_{\substack{\bn \in \NN^p\\ |\bn| = \dim(X)}} \deg_\PP^\bn(X) \, z_1^{m_1-n_1} \cdots z_p^{m_p-n_p};
		$$
		see \cite[Remark 2.9]{POSITIVITY}.

		In \autoref{lem_K_poly_reduce_Hilb}, we shall prove further connections between Hilbert coefficients and the $K$-polynomial.

	Following standard notations, we say that $X$ is a \emph{variety} over $\kk$ if $X$ is a reduced and irreducible separated scheme of finite type over $\kk$ (see, e.g., \cite[\href{https://stacks.math.columbia.edu/tag/020C}{Tag 020C}]{stacks-project}). 
	Since multidegrees are non-negative integers, it becomes natural to address the positivity of these numbers. 
	We now recall a complete characterization for the positivity of multidegrees from \cite{POSITIVITY}, which is an important tool in our work.
	For each subset $\mathfrak{J} = \{j_1,\ldots,j_k\} \subseteq [p]$, consider the natural projection $\Pi_\fJ :  \PP_\kk^{m_1} \times_\kk \cdots \times_\kk \PP_\kk^{m_p} \rightarrow \PP_\kk^{m_{j_1}} \times_\kk \cdots \times_\kk \PP_\kk^{m_{j_k}}$ and denote by $R_{(\fJ)}$ the $\ZZ^k$-graded $\kk$-algebra given by 
	$$
	R_{(\fJ)} := \bigoplus_{\substack{i_1\ge 0,\ldots, i_p\ge 0\\ i_{j} = 0 \text{ if } j \not\in \fJ}} {\left[R\right]}_{(i_1,\ldots,i_p)}.
	$$
	The following theorem characterizes the positivity of multidegrees.

	\begin{theorem}[{\cite[Theorem A, Proposition 5.1]{POSITIVITY}}]
		\label{thm_pos_multdeg}
		Let $X \subset \PP = \PP_\kk^{m_1} \times_\kk \cdots \times_\kk \PP_\kk^{m_p}$ be a variety.
		Then, for any $\bn = (n_1,\ldots,n_p) \in \NN^p$ with $|\bn| = \dim(X)$, we have that $\deg_\PP^\bn(X) > 0$ if and only if 
		$
		\sum_{j \in \fJ}n_j \;\le \; \dim\left(\Pi_\fJ(X)\right)
		$ 
		for all $\fJ \subseteq [p]$.
		Furthermore, $\msupp_\PP(X)$ is an algebraic 
		polymatroid.
	\end{theorem}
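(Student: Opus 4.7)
The plan is to prove both directions via the geometric interpretation of multidegree as an intersection number, then deduce the polymatroid structure from the submodularity of the projection-dimension function via transcendence degrees. Recall that $\deg_\PP^\bn(X)$ equals the degree of the zero-dimensional scheme $X\cap L$, where $L$ is the intersection of $m_i-n_i$ generic divisors of multidegree $\be_i$ for each $i\in[p]$ (the integer $\deg_\PP^\bn(X)$ is independent of the generic choice and agrees with the multigraded Bezout-type intersection number in $A^*(\PP)$).

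For the necessary direction, fix $\fJ\subseteq[p]$ and assume $\deg_\PP^\bn(X)>0$, so the generic complete intersection $X\cap L$ is nonempty. The image $\Pi_\fJ(X\cap L)$ then meets a generic linear subspace of $\Pi_\fJ(\PP)$ of codimension $\sum_{j\in\fJ}n_j$, which by the standard dimension count for projections forces $\dim(\Pi_\fJ(X))\ge\sum_{j\in\fJ}n_j$.

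For the sufficient direction, I would induct on $\dim(X)=|\bn|$. The base case is trivial. For the inductive step, choose an index $j\in[p]$ with $n_j\ge 1$ and cut $X$ by a generic divisor $H$ of multidegree $\be_j$; by Bertini this gives a pure-dimensional subscheme $Y=X\cap H$ with $\deg_\PP^{\bn-\be_j}(Y)=\deg_\PP^\bn(X)$, and for each $\fJ$ containing $j$ one has $\dim(\Pi_\fJ(Y))\ge\dim(\Pi_\fJ(X))-1$, while for $\fJ$ not containing $j$ the dimension does not drop. The main obstacle is selecting $j$ so that the hypothesis inequalities transport to the pair $(Y,\bn-\be_j)$; namely, if $\fJ$ is ``tight'' (equality in the hypothesis) and $j\in\fJ$, the inequality is preserved, but if $\fJ$ is tight and $j\notin\fJ$ one could lose positivity. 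To overcome this one shows the family of tight subsets is closed under union and intersection (a consequence of submodularity, proved below), so there is a unique maximal tight set $\fJ_{\max}$; taking $j\in\fJ_{\max}$ with $n_j\ge 1$ (which must exist since $|\bn|=\dim(X)$ and the inequality for $\fJ_{\max}$ is tight) keeps all tight inequalities in balance. One then applies reduction to a variety (after taking a suitable component of $Y$ preserving the relevant projections) and invokes induction.

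For the polymatroid claim, define $\rho\colon 2^{[p]}\to\NN$ by $\rho(\fJ)=\dim(\Pi_\fJ(X))$. Using generators $\xi_{j,0},\dots,\xi_{j,m_j}$ for the homogeneous coordinate ring of the $j$-th factor, restricted to $X$, one sees that $\rho(\fJ)$ is the transcendence degree over $\kk$ of the subfield of $\kk(X)$ generated by affine charts of the factors indexed by $\fJ$. Submodularity of $\rho$ is the classical submodularity of transcendence degree for compositums of subfields. Hence $\rho$ is the rank function of an algebraic polymatroid on $[p]$, whose base polytope has integer points described precisely by the displayed inequalities $\sum_{j\in\fJ}n_j\le\rho(\fJ)$ together with $|\bn|=\dim(X)=\rho([p])$. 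By the first part of the theorem, this set of integer points is exactly $\msupp_\PP(X)$, completing the proof.
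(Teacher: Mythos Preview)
The present paper does not prove this theorem at all: it is quoted verbatim from the authors' earlier work \cite{POSITIVITY} (Theorem~A and Proposition~5.1 there), so there is no argument here to compare against. Your outline is in fact close in spirit to the proof in that reference: one direction via generic linear sections, the other by an induction that hinges on the submodularity of $\fJ\mapsto\dim(\Pi_\fJ(X))$, with submodularity itself deduced from transcendence-degree considerations.

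Two concrete issues in your sketch. First, the hyperplane count is inverted: with the paper's conventions (see the formula $\mathcal{C}(X;\zzz)=\sum_{|\bn|=\dim X}\deg_\PP^\bn(X)\,z_1^{m_1-n_1}\cdots z_p^{m_p-n_p}$), the multidegree $\deg_\PP^\bn(X)$ is obtained by intersecting $X$ with $n_i$ generic hyperplanes of type $\ee_i$, not $m_i-n_i$. Your own projection step in the necessary direction already presupposes the correct count, since otherwise $\Pi_\fJ(L)$ would have codimension $\sum_{j\in\fJ}(m_j-n_j)$ rather than $\sum_{j\in\fJ}n_j$. Second, the assertion that ``for $\fJ$ not containing $j$ the dimension does not drop'' is false without further hypotheses: if $\Pi_\fJ|_X$ is generically finite then a generic vertical cut by $H$ of type $j\notin\fJ$ does lower $\dim\Pi_\fJ(Y)$. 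The actual inductive step has to argue more carefully that the chosen $j\in\fJ_{\max}$ lies in \emph{every} tight set (not just the maximal one), or else control the non-tight inequalities separately when the projection is generically finite; your sentence ``keeps all tight inequalities in balance'' elides exactly this point. These are fixable gaps rather than a wrong strategy.
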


	\subsection{Running example}\label{sec_running_example}
	The following is our  running example.
	Consider the permutation $ w = [1,5,3,2,4]$ in $S_5 $.
	Its {\it Rothe diagram} is depicted in  \autoref{rothe}.

	\begin{figure}[ht]
		\includegraphics[scale=0.4]{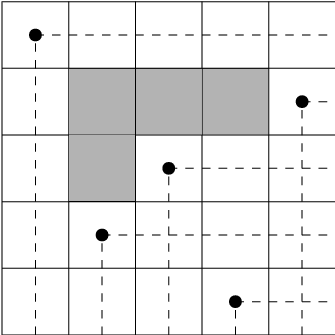}
		\caption{Rothe diagram of $ w = [1,5,3,2,4] $.}
		\label{rothe}
	\end{figure}
	
	The corresponding  \emph{Schubert determinantal ideal} is given by
	\begin{equation*}\label{eq:ideal-example}
		I_w = I_2
		 \begin{pmatrix}x_{1,1}&x_{1,2}&x_{1,3}&x_{1,4} \\  x_{2,1}&x_{2,2}&x_{2,3}&x_{2,4} \end{pmatrix}  
		+
	I_2\begin{pmatrix}x_{1,1}&x_{2,1}&x_{3,1} \\  x_{1,2}&x_{2,2}&x_{3,2}
		\end{pmatrix}\subset S,
	\end{equation*}
	where $I_2(-)$ denotes the ideal generated by $2\times 2$ minors.	
	Each multidegree of the corresponding matrix Schubert variety $X=\multProj(S/I_w) \subset \big(\PP_{\kk}^4\big)^5$ is either zero or one, i.e., $X$ is a multiplicity-free variety (see \autoref{def_multideg}).
	We have that $$
	\msupp_\PP(X) = \big\lbrace
	(1,3,4,4,4),(1,4,3,4,4),(2,2,4,4,4),(2,3,3,4,4),(3,1,4,4,4),(3,2,3,4,4),(4,1,3,4,4) \big\rbrace.
	$$

	\section{Motivating results in the literature}
	\label{sect_motivation}
	
	\subsection{Marley's result for Cohen-Macaulay modules in a single graded setting}
	\label{subsect_Marley}
	
	In this subsection, we extend Marley's work \cite{marley1989coefficients} on the coefficients of the Hilbert-Samuel polynomial of a zero-dimensional ideal in a Noetherian local ring. 
	Here we prove the equivalent of Marley's result for any module over a standard graded algebra. 
	
	\begin{setup}
		\label{setup_Marley}
		Let $(A, \nnn)$ be an Artinian local ring, $R$ be a standard $\NN$-graded finitely generated $A$-algebra, and $\mm = [R]_+ = \bigoplus_{n \ge 1} [R]_n$ be the graded irrelevant ideal of $R$.
	\end{setup}

	Let $M$ be a finitely generated $\ZZ$-graded $R$-module.
	We denote the \emph{Hilbert function of $M$} as 
	$$
	h_M : \ZZ \rightarrow \ZZ, \quad h_M(n) := \text{length}_A\left([M]_n\right). 
	$$
	Let $d = \dim(M)$ be the dimension of $M$.
	The \emph{Hilbert polynomial of $M$} can be written as 
	$$
	P_M(t) \;=\; \sum_{i=0}^{d-1}e_i(M) \binom{t+i}{i} \;\in\; \QQ[t], 
	$$
	and it satisfies $e_{i}(M) \in \ZZ$ for all $i \ge 0$, $e_{d-1}(M) >0$, and   $P_M(n) = \text{length}_A\left([M]_n\right)$ for $n \gg 0$ (see, e.g., \cite[\S 4.1]{BRUNS_HERZOG}).
	The integers $e_0(M), \ldots, e_{d-1}(M)$ are called the \emph{Hilbert coefficients of $M$}.
	In particular, $e(M):=e_{d-1}(M)$ is called the \emph{multiplicity of $M$}.

	Given a function $f: \ZZ \rightarrow \ZZ$, we define its first difference function as 
	$$
	\Delta^1(f) : \ZZ \rightarrow \ZZ,  \quad \Delta^1(f)(n) := f(n+1) - f(n).
	$$
	Inductively, we set $\Delta^i(f) := \Delta^1\left(\Delta^{i-1}(f)\right)$. 
	Also, we take the convention $\Delta^0(f) := f$.
	The main result of this subsection is the following extension of \cite[Theorem 1]{marley1989coefficients}.

	\begin{theorem}
		\label{thm_Marley}
		Assume \autoref{setup_Marley}.
		Let $M$ be a finitely generated $\ZZ$-graded $R$-module of dimension $d = \dim(M)$.
		Suppose that $M$ is a Cohen-Macaulay module. 
		Then the following inequality 
		$$
		{(-1)}^{d-i}\Delta^{i}\left(h_M-P_M\right)(n) \;\ge\; 0
		$$
		holds for all $0 \le i \le d$ and $n \in \ZZ$.
	\end{theorem}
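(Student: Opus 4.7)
The approach is to interpret the difference $h_M - P_M$ via local cohomology and then exploit a degree-one regular sequence to relate higher discrete differences of the length function of the top local cohomology module to the local cohomology of iterated quotients. The starting point is the graded Grothendieck--Serre identity: for any finitely generated $\ZZ$-graded $R$-module $N$, one has
\[
	h_N(n) - P_N(n) \;=\; \sum_{i \ge 0}(-1)^i \, \text{length}_A\!\left(\left[\HH^i_\mm(N)\right]_n\right)
\]
for every $n \in \ZZ$, the right-hand side being a finite sum since each $\HH^i_\mm(N)$ is Artinian with degrees bounded above. Because $M$ is Cohen--Macaulay of dimension $d$, local cohomology of $M$ vanishes except in cohomological degree $i = d$, so setting $f(n) := \text{length}_A\!\left([\HH^d_\mm(M)]_n\right)$ yields $(h_M - P_M)(n) = (-1)^d f(n)$. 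Consequently $(-1)^{d-i}\Delta^i(h_M - P_M)(n) = (-1)^i \Delta^i f(n)$, and the statement reduces to proving $(-1)^i \Delta^i f(n) \ge 0$ for every $0 \le i \le d$ and every $n \in \ZZ$.

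Next, after a standard faithfully flat base change $A \to A[t]_{\nnn A[t]}$, which preserves the Hilbert function, the Hilbert polynomial, the lengths of graded components of local cohomology, and the Cohen--Macaulay property, I may assume that the residue field $A/\nnn$ is infinite. Graded prime avoidance together with the Cohen--Macaulay hypothesis then lets me construct a maximal regular sequence $x_1, \ldots, x_d \in [R]_1$ on $M$. I set $M_j := M/(x_1,\ldots,x_j)M$, which is Cohen--Macaulay of dimension $d-j$, and write $f_j(n) := \text{length}_A\!\left([\HH^{d-j}_\mm(M_j)]_n\right)$; in particular $f_0 = f$.

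The short exact sequence $0 \to M_{j-1}(-1) \xrightarrow{x_j} M_{j-1} \to M_j \to 0$ induces a long exact sequence in local cohomology which, by Cohen--Macaulay vanishing applied to both $M_{j-1}$ and $M_j$, collapses to
\[
0 \;\longrightarrow\; \HH^{d-j}_\mm(M_j) \;\longrightarrow\; \HH^{d-j+1}_\mm(M_{j-1})(-1) \;\xrightarrow{\,x_j\,}\; \HH^{d-j+1}_\mm(M_{j-1}) \;\longrightarrow\; 0.
\]
Taking lengths in degree $n$ gives $f_j(n) = f_{j-1}(n-1) - f_{j-1}(n) = -\Delta^1 f_{j-1}(n-1)$, and a straightforward induction on $j$ produces $f_j(n) = (-1)^j \Delta^j f(n-j)$. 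Since each $f_i(n)$ is a non-negative integer, this forces $(-1)^i \Delta^i f \ge 0$ pointwise, which is precisely what we wanted.

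The main technical obstacle I anticipate is the careful bookkeeping of signs and degree shifts in the iterated collapse of the long exact sequences, coupled with verifying that the residue-field enlargement and the existence of a degree-one regular sequence truly preserve every invariant in play. A secondary concern is confirming that the surjectivity on the right of the collapsed short exact sequence genuinely holds at each step, which requires the vanishing $\HH^{d-j}_\mm(M_j) = 0$ in exactly the range in which the Cohen--Macaulay hypothesis applies.
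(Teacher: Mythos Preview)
Your proof is correct but follows a genuinely different route from the paper's.

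The paper avoids local cohomology entirely. It first proves the top case $i=d$ by induction on $d$, cutting by a single degree-one nonzerodivisor $\ell$ and using $\Delta^1(h_M-P_M)(n)=(h_{M/\ell M}-P_{M/\ell M})(n+1)$. It then runs a downward induction on $i$: knowing $(-1)^{d-i}\Delta^i(h_M-P_M)\ge 0$ and that $\Delta^{i-1}(h_M-P_M)(n)=0$ for $n\gg 0$, one sees that $(-1)^{d-i}\Delta^{i-1}(h_M-P_M)$ is weakly increasing and eventually zero, hence $\le 0$, which gives the case $i-1$. Your approach instead uses Grothendieck--Serre to rewrite $h_M-P_M=(-1)^d f$ with $f(n)=\text{length}_A([\HH^d_\mm(M)]_n)$, and then identifies each $(-1)^i\Delta^i f$, up to a degree shift, with the length function of $\HH^{d-i}_\mm(M_i)$ via the collapsed long exact sequences. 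Both arguments ultimately rely on the same base change and on cutting by degree-one regular elements; the paper cuts once per inductive step and finishes with an elementary monotonicity argument, while you cut by a full regular sequence and read off nonnegativity as an explicit length. Your version is more conceptual and actually proves more (each signed difference equals a concrete length), whereas the paper's version is more elementary in that it needs neither the Grothendieck--Serre formula nor any local cohomology machinery.

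One minor slip in your final paragraph: the surjectivity on the right of the collapsed sequence comes from the vanishing of $\HH^{d-j+1}_\mm(M_j)$ (since $M_j$ has cohomological dimension $d-j$), not of $\HH^{d-j}_\mm(M_j)$, which is exactly the nonzero module $f_j$ is measuring. This does not affect the argument itself.
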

	\begin{proof}
		By utilizing the (standard) faithfully flat extension $A \rightarrow A[x]_{\nnn A[x]}$, we may assume that the residue field of $A$ is infinite.
		Set $f:=h_M - P_M$.
		
		First, we prove that $\Delta^{d}(f)(n) \ge 0$.
		To that end, we proceed by induction on $d = \dim(M)$.
		If $d=0$, then $P_M(t) = 0$, and so the inequality $\Delta^0(f)(n) = h_M(n) - P_M(n) \ge 0$ holds for all $n \in \ZZ$.
		Suppose that $d \ge 1$.
		Since the residue field of $A$ is infinite and $M$ is Cohen-Macaulay, we may find a general element $\ell
		\in [R]_1$ which is a nonzerodivisor over $M$.
		It follows that $M/\ell M$ is a Cohen-Macaulay module of dimension $d-1$ and that the equalities $\Delta^1(h_M)(n) = h_{M/\ell M}(n+1)$ and $\Delta^1(P_M)(n) = P_{M/\ell M}(n+1)$ hold.
		Therefore, the induction hypothesis yields that
		$$
		\Delta^d(f)(n) = \Delta^{d-1}\left(\Delta^1(f)\right)(n) = \Delta^{d-1}(h_{M/\ell M} - P_{M/\ell M})(n+1) \ge 0
		$$
		for all $n \in \ZZ$.
		Hence we have settled the case $i = d$.
		
		Let $0 < i \le d$.
		Suppose that we have showed ${(-1)}^{d-i}\Delta^i(f)(n) \ge 0$ for all $n \in \ZZ$.
		On the other hand, we know that $f(n) = 0$ for all $n \gg 0$.
		Thus, we have
		$$
		{(-1)}^{d-i}\Delta^{i-1}(f)(n+1) \,-\, {(-1)}^{d-i}\Delta^{i-1}(f)(n) \,\ge\, 0 \quad \text{for all $n \in \ZZ$}
		$$
		and  $\Delta^{i-1}(f)(n) = 0$ for all $n \gg 0$.
		This implies that ${(-1)}^{d-(i-1)}\Delta^{i-1}(f)(n) \ge 0$ for all $n \in \ZZ$, finishing the proof. 
	\end{proof}	
	
	\begin{corollary}
		\label{cor_stabilization_Marley}
		Let $M$ be a Cohen-Macaulay finitely generated $\ZZ$-graded $R$-module of positive dimension.
		If $h_M(k) = P_M(k)$ for some $k \in \ZZ$, then $h_M(n) = P_M(n)$ for all $n \ge k$.
	\end{corollary}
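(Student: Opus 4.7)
The plan is to read off the corollary directly from \autoref{thm_Marley} by using only the cases $i=0$ and $i=1$ of the inequalities there, applied to $f := h_M - P_M$. Specifically, the case $i=0$ says $(-1)^{d}f(n)\ge 0$ for all $n\in\ZZ$, and the case $i=1$ says $(-1)^{d-1}\Delta^1(f)(n)\ge 0$ for all $n\in\ZZ$, which rewrites as $(-1)^{d}\bigl(f(n)-f(n+1)\bigr)\ge 0$, i.e., $(-1)^{d}f(n)\ge (-1)^{d}f(n+1)$.

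Hence the sequence $\bigl\{(-1)^{d}f(n)\bigr\}_{n\in\ZZ}$ is nonnegative and monotonically nonincreasing in $n$. Since by hypothesis $f(k)=h_M(k)-P_M(k)=0$, we get $(-1)^{d}f(k)=0$, and by monotonicity together with nonnegativity we conclude $(-1)^{d}f(n)=0$ for all $n\ge k$. Equivalently, $h_M(n)=P_M(n)$ for every $n\ge k$, which is exactly what is asserted.

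The only subtlety is that \autoref{thm_Marley} requires $d=\dim(M)\ge 0$ and the indices $0\le i\le d$ to make sense; the assumption that $M$ has positive dimension ensures that the index $i=1$ is allowed, so the monotonicity step is legitimate. No further induction or auxiliary construction is needed: the corollary is an immediate extraction from the two lowest-order inequalities provided by \autoref{thm_Marley}.
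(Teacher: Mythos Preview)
Your proof is correct and essentially the same as the paper's: both extract the result from the low-order inequalities of \autoref{thm_Marley} applied to $f=h_M-P_M$, obtaining that $(-1)^{d}f$ is nonnegative and nonincreasing, and then sandwiching at $n=k$. The only cosmetic difference is that the paper invokes just the case $i=1$ together with the eventual vanishing $f(n)=0$ for $n\gg 0$ to deduce the nonnegativity of $(-1)^{d}f$, whereas you cite the case $i=0$ directly; since the latter is already part of \autoref{thm_Marley}, this is a harmless variation.
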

	\begin{proof}
		Let $d = \dim(M) \ge 1$.
		By \autoref{thm_Marley}, we have ${(-1)}^{d-1}\Delta^1(h_M-P_M)(n)\ge 0$ for all $n \in \ZZ$, and thus we obtain 
		$$
		0 \;\ge\; {(-1)}^{d-1}(h_M - P_M)(n)  \;\; \text{ for all $n \in \ZZ$}
		$$
		due to the fact that $h_M(n) = P_M(n)$ for all $n \gg 0$.
		Therefore 
		$$
		0 \;\ge\; {(-1)}^{d-1}(h_M - P_M)(n) \;\ge\;   {(-1)}^{d-1}(h_M - P_M)(k) \;=\; 0
		$$
		for all $n \ge k$, and the result follows.
	\end{proof}
	
	Finally, the following corollary provides positivity and rigidity results for the Hilbert coefficients of a Cohen-Macaulay module.
	
	\begin{corollary}
		\label{cor_pos_Marley}
		Let $M$ be a Cohen-Macaulay finitely generated $\ZZ$-graded $R$-module of dimension $d = \dim(M)$.
		Then the following statements hold:
		\begin{enumerate}[\rm (i)]
			\item ${(-1)}^{d-1-j}e_j(M) \ge 0$ for all $0 \le j \le d-1$.
			\item For $0 \le j <  i < d-1$, if $e_i(M) = 0$ then $e_{j}(M) = 0$.
		\end{enumerate}
	\end{corollary}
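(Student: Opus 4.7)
The plan is to extract each coefficient $e_j(M)$ via a single finite-difference evaluation of $P_M$, and then feed the inequalities of \autoref{thm_Marley}. A direct computation using $\Delta^j \binom{t+i}{i} = \binom{t+i}{i-j}$ together with $\binom{-1}{0}=1$ and $\binom{k-1}{k}=0$ for $k\ge 1$ yields the extraction identity
\[
e_j(M)\;=\;\Delta^j P_M(-j-1),\qquad 0\le j\le d-1.
\]
By extending scalars along $A\to A[x]_{\nnn A[x]}$ we may assume the residue field is infinite, and (after a suitable grade shift if needed) we may arrange that $h_M(n)=0$ for every $n<0$.

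For part (i), the reductions above force $\Delta^j h_M(-j-1)=0$, since this value is a $\ZZ$-linear combination of $h_M$ evaluated on $\{-j-1,\ldots,-1\}$. Invoking \autoref{thm_Marley} with parameter index $j$ and point $n=-j-1$ then reads
\[
(-1)^{d-j}\bigl(0-e_j(M)\bigr)\;\ge\;0,
\]
which rearranges to $(-1)^{d-1-j}e_j(M)\ge 0$.

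For part (ii), I would induct on $d$. Picking $\ell\in [R]_1$ a nonzerodivisor on $M$, the quotient $M/\ell M$ is Cohen-Macaulay of dimension $d-1$ and satisfies $e_k(M/\ell M)=e_{k+1}(M)$; the induction hypothesis then propagates the vanishing of $e_i(M)$ down through every $e_j(M)$ with $1\le j<i$. The residual and most delicate step is to deduce $e_0(M)=0$ once the higher $e_i(M)$ vanish, since \autoref{thm_Marley} does not isolate the constant term $e_0$. The intended fix is to complement \autoref{thm_Marley} with the Hilbert-series representation $H_M(z)=Q(z)/(1-z)^d$, where $Q(z)=\sum_k q_k z^k$ has non-negative coefficients (standard for Cohen-Macaulay modules after reducing modulo a maximal regular sequence of linear forms). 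Taylor expanding $Q(1-u)$ about $u=0$ produces
\[
e_j(M)\;=\;\tfrac{(-1)^{d-1-j}}{(d-1-j)!}\,Q^{(d-1-j)}(1),
\]
and the non-negativity of the $q_k$ simultaneously recovers the sign in (i) and delivers (ii): the vanishing $e_i(M)=0$ forces $q_m=0$ for every $m\ge d-1-i$, which in turn annihilates every $e_j(M)$ with $j<i$. The main obstacle throughout is handling the $e_0$ coefficient, which neither the extraction identity nor the induction touches directly; bridging from the first-difference inequalities of \autoref{thm_Marley} to the global structural information encoded by $h$-vector positivity is the technical heart of the argument.
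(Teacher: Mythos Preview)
Your extraction identity $e_j(M)=\Delta^j P_M(-j-1)$ is correct, and feeding it into \autoref{thm_Marley} at $n=-j-1$ is precisely what the paper does---only the paper first cuts by a length-$j$ linear regular sequence to reduce to $j=0$ and then evaluates at $-1$, which amounts to the same computation. One genuine slip: the phrase ``after a suitable grade shift'' is wrong. Replacing $M$ by $M(-a)$ sends $P_M(t)$ to $P_M(t-a)$, and the coefficients $e_j$ in the basis $\binom{t+j}{j}$ are \emph{not} shift-invariant (e.g.\ $e_0(\kk[x,y])=0$ but $e_0(\kk[x,y](1))=1$, and indeed part (i) is false for $\kk[x,y](1)$). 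The paper's proof also silently uses $h_M(-1)=0$, so the hypothesis $[M]_{<0}=0$ is implicitly needed in both arguments; your attempt to manufacture it by shifting, however, does not work.

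For part (ii) you take a genuinely different route. The paper stays inside the Marley framework: it reduces to the implication $e_1(M)=0\Rightarrow e_0(M)=0$ and then invokes \autoref{cor_stabilization_Marley} to push the equality $h_M=P_M$ down to $n=-2$, whence $e_0(M)=P_M(-1)=h_M(-1)=0$. Your Hilbert-series argument is cleaner and entirely self-contained: the formula $e_j(M)=(-1)^{d-1-j}\sum_k q_k\binom{k}{d-1-j}$ with $q_k\ge 0$ (Cohen--Macaulay $h$-vector non-negativity) yields (i) immediately, and $e_i(M)=0$ forces $q_k=0$ for all $k\ge d-1-i$, which kills every $e_j$ with $j<i$. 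This bypasses \autoref{thm_Marley} and \autoref{cor_stabilization_Marley} altogether, so the inductive scaffolding you set up at the start of (ii) is redundant---you may drop it. Note that the $h$-vector route also needs $q_k=0$ for $k<0$, i.e.\ the same assumption $[M]_{<0}=0$.
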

	\begin{proof}
		As in \autoref{thm_Marley}, we may assume that the residue field of $A$ is infinite, and thus may find a  regular sequence $\ell_1,\ldots,\ell_j \in [R]_1$ over $M$.
		Then $M'=M/(\ell_1,\ldots,\ell_j)M$ is Cohen-Macaulay of dimension $d-j$ and we have the equality $e_k(M) = e_{k-j}(M')$ for all $k \ge j$.
		Hence we may assume that $j = 0$ to prove both parts of the corollary.
		
		(i) From \autoref{thm_Marley}, we get the inequality
		$$
		0 \;\le\; {(-1)}^d\left(h_M(-1) - P_M(-1)\right) \;=\; {(-1)}^d\left(0-e_0(M)\right) \;=\; {(-1)}^{d-1}e_0(M)
		$$
		that completes the proof of this part.
		
		(ii) It suffices to show that: if $e_1(M)=0$ then $e_0(M)=0$.
		Thus assume that $e_1(M)=0$.
		Choose a nonzerodivisor $\ell \in [R]_1$ over $M$.
		Then $e_0(M/\ell M) = e_1(M) = 0$, $\Delta^1(h_M)(n) = h_{M/\ell M}(n+1)$, and $\Delta^1(P_M)(n) = P_{M/\ell M}(n+1)$.
		Consequently, $P_{M/\ell M}(-1) = e_0(M/\ell M) = 0$, and \autoref{cor_stabilization_Marley} implies  that $h_{M/\ell M}(n) = P_{M/\ell M}(n)$ for all $n \ge -1$.
		Inductively, since $h_{M}(n) = P_{M}(n)$ for $n\gg 0$, we obtain  
		$$
		h_{M}(n) \;=\; P_{M}(n)  \;\; \text{for all $n \ge -2$}.
		$$
		Therefore, $P_M(-1)=0$ and this implies that $e_0(M)=0$, finishing  the proof.
	\end{proof}

	\begin{example}
		\label{rem_Marley_no_multigrad}
		Here we discuss a basic reason why an extension of Marley's work \cite{marley1989coefficients} is not possible for the case $p \ge 2$.
		Let $S = \kk[x_0,\ldots,x_r]$ and $I = (f_0,\ldots,f_s) \subset S$ be an ideal generated by homogeneous polynomials of the same positive degree.
		The Rees algebra $\Rees(I) = \bigoplus_{n=0}^\infty I^nt^n$ can be written as a quotient of the standard bigraded polynomial ring $T = \kk[x_0,\ldots,x_r,y_0,\ldots,y_s]$.
		It gives the bihomogeneous coordinate ring of the graph $\Gamma \subset \PP_{\kk}^r \times_\kk \PP_{\kk}^s$ of the rational map $\mathcal{F} : \PP_{\kk}^r \dashrightarrow \PP_\kk^s$ with representative $(f_0:\cdots:f_s)$.
		Let $\mm_1 = (x_0,\ldots,x_r) \Rees(I)$, $\mm_2 = (y_0,\ldots,y_s) \Rees(I)$, and $Y \subset \PP_{\kk}^s$ be the image of $\mathcal{F}$.
		To emulate the approach in the proof of \autoref{thm_Marley} for the bigraded algebra $\Rees(I)$, we would need to successively cut by nonzerodivisors that are either in $\mm_1$ or $\mm_2$.
		However, we have very few such elements in this case.
		Indeed, we have that $\codim(\mm_2) = 1$  and  $\codim(\mm_1) = (r+1) - \dim(Y)$ (see, e.g., \cite[Chapter 5]{SwHu}); in particular, we also obtain that $\codim(\mm_1) = 1$ when $\mathcal{F}$ is generically finite.
 	\end{example}
	
	\subsection{Brion's positivity result for varieties with rational singularities}
	\label{subsect_Brion}
	
	In this short subsection, we quickly present Brion's positivity result \cite{BRION_POSITIVE} for varieties with rational singularities (also, see \cite{BRION_FLAG}), and we restrict ourselves to a multiprojective setting. 
	Since this result has motivated our positivity result for ideals with a shellable initial ideal, we provide a short account.

	We recall the definition of rational singularities (for more details, see \cite[\S 5.1]{KOLLAR_MORI}).
	Let $X$ be a variety over an algebraically closed field $\kk$ of characteristic zero, and $f : \widetilde{X} \rightarrow X$ be a resolution of singularities. 
	We say that $f : \widetilde{X} \rightarrow X$ is a \emph{rational resolution} if 
	\begin{enumerate}[(a)]
		\item $f_* \OO_{\widetilde{X}} \cong \OO_X$ (equivalently, $X$ is normal), and
		\item $R^if_*\OO_{\widetilde{X}} =0$ for $i >0$.
	\end{enumerate}
	We say that $X$ has \emph{rational singularities}  if every resolution $f : \widetilde{X} \rightarrow X$ is rational.
	
	\medskip
	We present the following result of Brion that partly motivated our positivity results. 
	
	\begin{theorem}[{Brion \cite{BRION_POSITIVE}}]
		\label{thm_Brion}
		Let $X \subset \PP_\kk^{m_1} \times_{\kk} \cdots \times_{\kk} \PP_\kk^{m_p}$ be  variety with rational singularities. 
		Then the Hilbert coefficients of $X$ alternate in sign.
		More precisely, we have the inequality
		$$
		(-1)^{\dim(X) - |\bn|} \, \deg_\PP^\bn(X) \,\ge\, 0
		$$
		for all $\bn \in \NN^p$.
	\end{theorem}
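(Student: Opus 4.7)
The plan is to induct on $\dim X$ via general hyperplane sections, and handle the base case $\bn = \mathbf{0}$ directly using Serre duality and vanishing on a resolution. Throughout one assumes $\kk$ is algebraically closed of characteristic zero, as required by the definition of rational singularities.

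For the inductive step, I would invoke that rational singularities are preserved under a general hyperplane section in characteristic zero (a Bertini-type result due to Elkik and others). For any $i$ with $m_i \ge 1$, a general hyperplane $H \subset \PP$ of bidegree $\mathbf{e}_i$ gives a short exact sequence
\[
0 \to \OO_X(-\mathbf{e}_i) \to \OO_X \to \OO_{X \cap H} \to 0,
\]
so $P_{X \cap H}(\ttt) = P_X(\ttt) - P_X(\ttt - \mathbf{e}_i)$. Pascal's identity $\binom{a}{k} - \binom{a-1}{k} = \binom{a-1}{k-1}$ unpacks this into the coefficient-level identity $\deg_\PP^{\bn'}(X \cap H) = \deg_\PP^{\bn' + \mathbf{e}_i}(X)$ for every $\bn' \in \NN^p$. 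Hence for any $\bn \ne \mathbf{0}$ one picks $i$ with $n_i \ge 1$, applies the inductive hypothesis to $X \cap H$ (which has dimension $\dim X - 1$ and rational singularities), and reindexes to obtain $(-1)^{\dim X - |\bn|}\deg_\PP^\bn(X) \ge 0$.

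The base case $\bn = \mathbf{0}$ reduces to a direct evaluation. Since $\binom{-1+n_j}{n_j} = 0$ for $n_j \ge 1$ and $\binom{-1}{0} = 1$, substituting $\ttt = -\mathbf{1}$ in the expansion of $P_X$ gives $P_X(-\mathbf{1}) = \deg_\PP^\mathbf{0}(X)$. Since $X$ is Cohen--Macaulay, Serre vanishing for $\ttt \gg \mathbf{0}$ together with polynomiality upgrades this to the identity $P_X(\ttt) = \chi(X, \OO_X(\ttt))$ on all of $\ZZ^p$, so $\deg_\PP^\mathbf{0}(X) = \chi(X, \OO_X(-\mathbf{1}))$. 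Serre duality on the Cohen--Macaulay variety $X$ gives
\[
\chi(X, \OO_X(-\mathbf{1})) = (-1)^{\dim X}\chi(X, \omega_X(\mathbf{1})),
\]
and passing to a resolution $f : \widetilde{X} \to X$, Grauert--Riemenschneider (which yields $Rf_*\omega_{\widetilde{X}} = \omega_X$ precisely because $X$ has rational singularities) together with Kawamata--Viehweg applied to the nef and big line bundle $f^*\OO_\PP(\mathbf{1})$ on $\widetilde{X}$ yields $H^i(X, \omega_X(\mathbf{1})) = 0$ for all $i > 0$. Therefore $\chi(X, \omega_X(\mathbf{1})) = h^0(X, \omega_X(\mathbf{1})) \ge 0$, giving $(-1)^{\dim X}\deg_\PP^\mathbf{0}(X) \ge 0$.

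The main obstacle is the Bertini-type input: verifying that a general hyperplane section of a variety with rational singularities again has rational singularities, and in particular remains reduced and irreducible. This is classical in characteristic zero but requires genuine input (e.g., a log resolution argument). A secondary issue is handling degenerate configurations where the projection $\Pi_{\{i\}}(X)$ has unexpectedly low dimension, so that a general hyperplane of bidegree $\mathbf{e}_i$ either contains or misses $X$; in those cases one checks directly via \autoref{thm_pos_multdeg} that the Hilbert coefficients with $n_i \ge 1$ vanish, so the claim is vacuous and the induction passes through unaffected.
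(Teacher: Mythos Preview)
Your proposal is correct and follows essentially the same strategy as the paper: induct on $\dim X$ via general hyperplane sections (invoking a Bertini-type result for rational singularities, which the paper cites as \cite[Remark 3.4.11(3)]{JOINS_INTERS}), and handle the base case $\bn = \mathbf{0}$ via Kawamata--Viehweg vanishing on a resolution. The only cosmetic difference is that you pass through Serre duality to $\omega_X(\mathbf{1})$ and Grauert--Riemenschneider, whereas the paper works directly with $\OO_X(-\mathbf{1})$ and the Leray spectral sequence; these are Serre-dual formulations of the same vanishing.
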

	\begin{proof}
		Let $f : \widetilde{X} \rightarrow X$ be a rational resolution of $X$.
		Consider the ample line bundle $\mathcal{L} = \OO_X(1, \ldots,1)$.
		From the rational singularities assumption and the projection formula (see, e.g., \cite[Exercise III. 8.3]{HARTSHORNE}), it follows that the Leray spectral sequence
		$$
		E_2^{p,q} \;=\; \HH^p\big(X,\, R^qf_*\left(f^*\mathcal{L}^{-1}\right)\big) \;\cong\; \HH^p\big(X,\, R^qf_*\OO_{\widetilde{X}} \otimes_{\OO_X} \mathcal{L}^{-1}\big) \; \Longrightarrow \; \HH^{p+q}\big(\widetilde{X}, f^*\mathcal{L}^{-1}\big)
		$$
		degenerates, and so we get the isomorphism $\HH^i(X, \mathcal{L}^{-1}) \cong \HH^i(\widetilde{X}, f^*\mathcal{L}^{-1})$ for all $i \ge 0$.
		The Kawamata--Viehweg vanishing theorem (see, e.g., \cite[\S 5]{VANISHING_THMS}) implies that $\HH^i(X, \mathcal{L}^{-1}) \cong \HH^i(\widetilde{X}, f^*\mathcal{L}^{-1}) = 0$ for all $i < r:=\dim(X)$.
		By utilizing this vanishing and the expression of the (multigraded) Hilbert polynomial in terms of Euler characteristic (see, e.g., \cite[Lemma 4.3]{KLEIMAN_THORUP})
		$$
		P_X(n_1,\ldots,n_p) \;=\; \sum_{i=0}^{r} (-1)^{i} \dim_\kk\left(\HH^i\big(X, \OO_X(n_1,\ldots,n_p)\big)\right),
		$$
		we obtain the equalities 
		$$
		\deg_\PP^{\mathbf{0}}(X) \;=\; P_X(-1,\ldots,-1) = (-1)^{r} \dim_\kk\left(\HH^{r}\big(X, \OO_X(-1,\ldots,-1)\big)\right). 
		$$
		Therefore the inequality $(-1)^{r}\deg_\PP^{\mathbf{0}}(X) \ge 0$ follows, and we can conclude the proof by induction on $r$.
		Indeed, for any $1 \le i \le p$ and any general hyperplane $H \subset \PP_\kk^{m_i} \subset \PP_\kk^{m_1} \times_{\kk} \cdots \times_{\kk} \PP_\kk^{m_p}$, we have that $Y = X \cap H$ also has rational singularities (see the corresponding adaptation of Bertini's theorem in \cite[Remark 3.4.11(3)]{JOINS_INTERS}) and that $\deg_\PP^{\bn-\ee_i}(Y) = \deg_\PP^{\bn}(X)$ for all $\bn \in \NN^p$ with $\bn \ge \ee_i$.
	\end{proof}

	\section{Shellability and Hilbert polynomials}
	\label{sect_shellable}
	
	In this section, we deal with the Hilbert polynomial of a square-free monomial ideal whose corresponding simplicial complex is shellable. 
	For this family of ideals, we recover and strengthen the previous motivating results from \autoref{sect_motivation} (see \autoref{cor_pos_Marley} and \autoref{thm_Brion}).

	\begin{definition}
		Let $\Delta$ be a simplicial complex with facets $F_1,\ldots,F_k$.
		We say that $\Delta$ is \emph{shellable with order $F_1,\ldots,F_k$} if $\Delta$ is of pure dimension and $\langle F_1,\ldots,F_{i-1} \rangle \cap \langle F_i \rangle$ is pure of dimension $\dim(F_i)-1$  for all $2 \le i \le k$, where $\langle F_1,\ldots,F_{i-1} \rangle$ denotes the simplicial complex generated by $F_1,\ldots, F_{i-1}$.
	\end{definition}
	\begin{setup}
		Let $\kk$ be a field and $S = \kk[x_1,\ldots,x_n]$ be a standard $\NN^p$-graded polynomial ring. 
		Let $\PP = \multProj(S)$ be the corresponding multiprojective space. 
		For a square-free monomial ideal $J \subset S$, we denote by $\Delta(J)$ its Stanley-Reisner simplicial complex.
	\end{setup}

	First, we have a general formula for the Hilbert polynomial of a square-free monomial ideal; the following Inclusion-Exclusion polynomial  is useful to provide a simplified version of this formula.
	
	\begin{notation}
		Given a sequence of  prime ideals $\pp_1,\ldots,\pp_k \in \Spec(S)$ and a subset $\bbS \subseteq [k]$, we define the ideal $\pp_\bbS := \sum_{i \in \bbS} \pp_i$.
		If $\bbS = \emptyset$, we set $\pp_\bbS = S$.
		We further specify 
		$$
		\text{IE}(\{\pp_1,\ldots,\pp_k\})(\ttt) :=\sum_{\bbS\subseteq [k]} (-1)^{|\bbS|-1}P_{S/\pp_\bbS}(\ttt).
		$$
	\end{notation}
	
	\begin{lemma}
		\label{lem_general_IE}
		Let $J \subset S$ be a square-free monomial ideal, and consider the corresponding minimal primary decomposition $J = \pp_1 \cap \cdots \cap \pp_k$.
		Then we have the equality
		$$
		P_{S/J}(\ttt) \;= \; {\rm IE}(\{\pp_1,\ldots,\pp_k\})(\ttt).
		$$
	\end{lemma}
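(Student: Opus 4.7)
The plan is to reduce the identity to a purely combinatorial inclusion--exclusion on standard monomial bases and then invoke the fact that the multigraded Hilbert polynomial of a finitely generated module agrees with its Hilbert function once all coordinates of $\fv$ are sufficiently large.

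First I would exploit the square-free hypothesis. Since $J$ is a square-free monomial ideal, each associated prime $\pp_i$ is generated by a subset of the variables, say $\pp_i = (x_j : j \in A_i)$ for some $A_i \subseteq [n]$. Consequently, for every $\bbS \subseteq [k]$ the sum $\pp_\bbS = \sum_{i \in \bbS} \pp_i = (x_j : j \in \bigcup_{i \in \bbS} A_i)$ is again a monomial prime, and in particular a monomial ideal whose standard monomials are the monomials supported outside $\bigcup_{i \in \bbS} A_i$. In multidegree $\fv$ I would write $h_{S/\pp_\bbS}(\fv)$ as the cardinality of this explicit set of monomials.

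Next, for each $i$ let $T_i(\fv)$ denote the set of monomials of multidegree $\fv$ that are \emph{not} in $\pp_i$; this is a $\kk$-basis of $[S/\pp_i]_\fv$. Since $J = \pp_1 \cap \cdots \cap \pp_k$, a monomial lies outside $J$ precisely when it lies outside some $\pp_i$, so the standard monomials of $S/J$ in multidegree $\fv$ are $\bigcup_{i=1}^k T_i(\fv)$. Crucially, for any $\bbS \neq \emptyset$,
\[
\bigcap_{i \in \bbS} T_i(\fv) \;=\; \{m \text{ of multidegree } \fv : m \notin \pp_i \text{ for all } i \in \bbS\} \;=\; \{m \text{ of multidegree } \fv : m \notin \pp_\bbS\},
\]
so its cardinality equals $h_{S/\pp_\bbS}(\fv)$. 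Applying the classical inclusion--exclusion formula to the finite sets $T_1(\fv), \ldots, T_k(\fv)$ gives the Hilbert function identity
\[
h_{S/J}(\fv) \;=\; \sum_{\emptyset \neq \bbS \subseteq [k]} (-1)^{|\bbS|-1}\, h_{S/\pp_\bbS}(\fv).
\]
The $\bbS = \emptyset$ term contributes $0$ since $\pp_\emptyset = S$ gives $S/\pp_\emptyset = 0$, so this agrees with the definition of $\mathrm{IE}(\{\pp_1,\ldots,\pp_k\})(\fv)$.

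Finally, I would promote this identity of Hilbert functions to an identity of Hilbert polynomials. For every finitely generated $\NN^p$-graded $S$-module $M$ we have $h_M(\fv) = P_M(\fv)$ whenever $\fv \gg \mathbf{0}$ (coordinatewise), as recalled in \autoref{sect_recap_multdeg}. Applying this simultaneously to $S/J$ and to each $S/\pp_\bbS$, the displayed equality becomes $P_{S/J}(\fv) = \mathrm{IE}(\{\pp_1,\ldots,\pp_k\})(\fv)$ for all $\fv \in \NN^p$ with all coordinates sufficiently large. Since two polynomials in $\QQ[t_1,\ldots,t_p]$ that agree on such a set must coincide, the asserted polynomial identity follows. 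There is no real obstacle here; the only thing to be careful about is the disambiguation between the $\emptyset$ term and the standard inclusion--exclusion formula, which is handled by the convention $\pp_\emptyset = S$.
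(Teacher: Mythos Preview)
Your proof is correct and takes a genuinely different route from the paper's. The paper argues by induction on $k$: it uses the short exact sequence
\[
0 \to S/J \to S/(\pp_1\cap\cdots\cap\pp_{k-1})\oplus S/\pp_k \to S/\big((\pp_1\cap\cdots\cap\pp_{k-1})+\pp_k\big) \to 0,
\]
the additivity of Hilbert polynomials, and the distributive law $(\pp_1\cap\cdots\cap\pp_{k-1})+\pp_k=(\pp_1+\pp_k)\cap\cdots\cap(\pp_{k-1}+\pp_k)$ (valid because each $\pp_i$ is generated by variables) to reduce to two smaller square-free monomial ideals. You instead prove the identity directly at the level of Hilbert functions, by noting that the standard monomials of $S/J$ in a fixed multidegree are exactly $\bigcup_i T_i(\fv)$ and that each finite intersection $\bigcap_{i\in\bbS}T_i(\fv)$ counts $h_{S/\pp_\bbS}(\fv)$, so the classical inclusion--exclusion on sets applies; then you pass to polynomials via agreement for $\fv\gg\mathbf{0}$.

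Your approach is more elementary and self-contained: it avoids induction and any homological input, relying only on the combinatorics of monomial supports. The paper's approach, while slightly less direct here, is the one that generalizes in \autoref{shell_prop}, where the inductive short-exact-sequence peeling is exactly what interacts well with a shelling order to produce the refined formula \autoref{hilb_pol_sum}. So each buys something: yours is cleaner for this lemma in isolation, while the paper's sets up the machinery reused immediately afterward.
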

	\begin{proof}
		We proceed by induction on $k$.
		The case $k = 1$ is clear.
		We have the short exact sequence
		$$
		0 \rightarrow S/J \rightarrow S/\pp_1\cap\cdots\cap \pp_{k-1}\oplus S/\pp_k \rightarrow S/(\pp_1\cap\cdots\cap \pp_{k-1})+\pp_k \rightarrow 0.
		$$
		The additivity of Hilbert polynomials gives the equality 
		$$
		P_{S/J}(\ttt)= P_{S/\pp_{1}\cap\cdots\cap \pp_{k-1}}(\ttt)+ P_{S/\pp_k}(\ttt) - P_{S/(\pp_{1}\cap\cdots\cap \pp_{k-1})+\pp_k}(\ttt).
		$$
		Since each $\pp_i$ is generated by a collection of variables, we have the distributive equality 
		$$
		(\pp_1 \cap \cdots \cap \pp_{k-1}) + \pp_k \;=\; (\pp_1+\pp_k) \cap \cdots \cap (\pp_{k-1} + \pp_k).
		$$
		After applying the inductive hypothesis to the square-free monomial ideals $J_1 = \pp_{1}\cap\cdots\cap \pp_{k-1}$ and $J_2 = (\pp_1+\pp_k) \cap \cdots \cap (\pp_{k-1} + \pp_k)$, we obtain
		$$
		P_{S/J}(\ttt) \;=\; \sum_{\bbS \subseteq [k-1]} (-1)^{|\bbS|-1} P_{S/\pp_\bbS}(\ttt) + P_{S/\pp_k}(\ttt) + \sum_{\{k\} \subsetneq \bbS \subseteq [k]} (-1)^{|\bbS|-1} P_{S/\pp_\bbS}(\ttt),
		$$
		whence the result follows.
	\end{proof}

	\begin{remark}
		\label{rem_mon_prime}
		If $\pp \subset S$ is a monomial prime ideal, then the corresponding Hilbert polynomial is given by 
		$$
		P_{S/\pp}(\ttt) \;=\; \binom{t_1+n_1(\pp)-1}{n_1(\pp)-1} \cdots \binom{t_p + n_p(\pp)-1}{n_p(\pp)-1}
		$$
		where $n_i(\pp)$ equals the number of variables of degree $\ee_i$ not in $\pp$.
		In particular, $P_{S/\pp}(\ttt)$ has only one nonzero Hilbert coefficient.
	\end{remark}
	
	Under a shellable assumption, we have a significant improvement of the formula given in \autoref{lem_general_IE}.
	
	%
	%
	%

	\begin{proposition}\label{shell_prop}
		Let $J \subset S$ be a square-free monomial ideal, and consider the corresponding minimal primary decomposition $J = \pp_1 \cap \cdots \cap \pp_k$. 
		Suppose that 
		$\Delta(J)$ is shellable, and that the shelling order of the facets is given by $F_1,\ldots,F_k$
		where $F_i$ corresponds to $\pp_i$.
		Let $c$ be the common codimension of the primes $\pp_i$.
		Then, depending on this shelling order, we have the equality
		\begin{equation}\label{hilb_pol_sum}
			P_{S/J}(\ttt) \;=\; \sum_{j=1}^k \left(P_{S/\pp_j}(\ttt)-D_j(\ttt)\right),
		\end{equation}
		where $D_j(\ttt)={\rm IE}(\mathfrak{P}_j)(\ttt)$ and $\mathfrak{P}_j$ is the following sequence {\rm(}without repetitions{\rm)} of prime ideals
		$$
		\mathfrak{P}_j \,:= \, \lbrace \pp_{\ell}+\pp_j \mid \ 1\leq \ell\leq j-1 \text{ and } \codim(\pp_\ell+\pp_j)=c+1  \rbrace. 
		$$
		Furthermore,  for each $1\le j\le k$, there exists a set of  distinct variables $\{x_{\ell_{j,1}},\ldots, x_{\ell_{j,b_j}}\}\subset \pp_{[j-1]}\setminus \pp_j$ such that $\mathfrak{P}_j=\{(\pp_j,x_{\ell_{j,1}}),\ldots, (\pp_j,x_{\ell_{j,b_j}})\}$.
	\end{proposition}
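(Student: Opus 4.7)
The plan is to prove the identity by induction on $k$, telescoping the additivity of Hilbert polynomials along a chain of short exact sequences built from the primary decomposition, and to identify the ``correction term'' at each step via the shellability hypothesis. Set $J_j := \pp_1 \cap \cdots \cap \pp_j$, so $J_k = J$ and $J_1 = \pp_1$. For each $j \geq 2$, the standard short exact sequence
$$0 \to S/J_j \to S/J_{j-1} \oplus S/\pp_j \to S/(J_{j-1}+\pp_j) \to 0$$
yields, after taking Hilbert polynomials,
$$P_{S/J_j}(\ttt) \;=\; P_{S/J_{j-1}}(\ttt) + P_{S/\pp_j}(\ttt) - P_{S/(J_{j-1}+\pp_j)}(\ttt).$$
Telescoping, and using $\mathfrak{P}_1 = \emptyset$ (so $D_1 = 0$), this collapses to
$$P_{S/J}(\ttt) \;=\; \sum_{j=1}^k \bigl(P_{S/\pp_j}(\ttt) - P_{S/(J_{j-1}+\pp_j)}(\ttt)\bigr),$$
and what remains is to identify $P_{S/(J_{j-1}+\pp_j)}(\ttt)$ with $D_j(\ttt)$ for each $j \ge 2$.

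For that identification I would pass to the Stanley-Reisner dictionary. Since $J$ is squarefree, $\pp_i = (x_v : v \notin F_i)$ for each facet $F_i$, and all facets have size $n - c$ by the pure-codimension assumption. Because sums of Stanley-Reisner ideals correspond to intersections of the underlying simplicial complexes, $J_{j-1} + \pp_j$ is the Stanley-Reisner ideal of $\langle F_1,\ldots,F_{j-1}\rangle \cap \langle F_j\rangle$. By shellability, the facets of this intersection are pure of dimension $\dim(F_j)-1$, so each has the form $F_j \setminus \{v\}$ for some $v \in F_j$, and each such facet arises as $F_\ell \cap F_j$ for some $\ell < j$. The condition $|F_\ell \cap F_j| = |F_j|-1$ translates via $|F_i| = n-c$ exactly into $\codim(\pp_\ell + \pp_j) = c+1$.

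A short computation now shows that whenever $\codim(\pp_\ell + \pp_j) = c+1$, we have $\pp_\ell + \pp_j = \pp_j + (x_v)$ for the unique $v \in F_j \setminus F_\ell$, and this variable $x_v$ lies in $\pp_{[j-1]} \setminus \pp_j$; this yields the ``furthermore'' statement about the distinct variables $x_{\ell_{j,1}},\ldots,x_{\ell_{j,b_j}}$. In particular, the minimal primes of $J_{j-1} + \pp_j$ are exactly the primes appearing in $\mathfrak{P}_j$, with no repetitions. Since $J_{j-1} + \pp_j$ is squarefree and hence equal to the intersection of its minimal primes, \autoref{lem_general_IE} applies and gives
$$P_{S/(J_{j-1}+\pp_j)}(\ttt) \;=\; \mathrm{IE}(\mathfrak{P}_j)(\ttt) \;=\; D_j(\ttt),$$
which is the desired identification.

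The main obstacle is the combinatorial bookkeeping in the middle step: converting the purely combinatorial shellability condition (pure-dimensional intersections in the face poset) into the precise algebraic statement that the minimal primes of $J_{j-1} + \pp_j$ are exactly the ideals $\pp_j + (x_v)$ coming from codimension-one intersections with earlier facets, and simultaneously confirming that the same set is correctly parametrized as $\{\pp_\ell + \pp_j : \ell < j,\ \codim(\pp_\ell + \pp_j) = c+1\}$ with no double-counting. Once this Stanley-Reisner translation is in place, the remaining work is routine: the short exact sequences telescope, and \autoref{lem_general_IE} handles the inclusion-exclusion bookkeeping on $\mathfrak{P}_j$.
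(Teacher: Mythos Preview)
Your proof is correct and follows essentially the same approach as the paper's: both proceed by induction on $k$ via the short exact sequence $0 \to S/J_j \to S/J_{j-1} \oplus S/\pp_j \to S/(J_{j-1}+\pp_j) \to 0$, and both identify $J_{j-1}+\pp_j$ as the intersection of the primes in $\mathfrak{P}_j$ (the paper via the distributive law $(\pp_1\cap\cdots\cap\pp_{j-1})+\pp_j = \bigcap_{\ell<j}(\pp_\ell+\pp_j)$ followed by discarding redundant components using shellability, you via the Stanley--Reisner dictionary applied to $\langle F_1,\ldots,F_{j-1}\rangle\cap\langle F_j\rangle$). Your telescoping presentation and the paper's inductive presentation are equivalent, and both then invoke \autoref{lem_general_IE} on the squarefree ideal $J_{j-1}+\pp_j$.
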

	\begin{proof}
		Again, the proof follows by induction on $k$, and the base case $k=1$ is clear.
		The shellability assumption yields the equalities  
		$$
		J'  := (\pp_1\cap\cdots\cap\pp_{k-1}) + \pp_k 
		= (\pp_1+\pp_k) \cap \cdots \cap (\pp_{k-1} + \pp_{k}) \\
		= \bigcap_{\pp \in \mathfrak{P}_k} \pp. 
		$$
		Moreover, if $\codim(\pp_j + \pp_k)=c+1$, then the ideal $\pp_j + \pp_k$ equals the sum of $\pp_k$ and a variable not in $\pp_k$.
		The short exact sequence $
		0 \rightarrow S/J \rightarrow S/\pp_1\cap\cdots\cap \pp_{k-1}\oplus S/\pp_k \rightarrow S/J' \rightarrow 0
		$ and the induction hypothesis give the equality
		$$
		P_{S/J}(\ttt) \;=\; \sum_{j=1}^{k-1}   \left(P_{S/\pp_j}(\ttt)-D_j(\ttt)\right) + P_{S/\pp_k}(\ttt) - D_k(\ttt),
		$$
		and so the result follows.
	\end{proof}

	We discuss several important consequences of the formula given in \autoref{shell_prop}.
	
	\begin{corollary}
		\label{cor_positivity_shellable}
		Adopt the same assumptions and notation of \autoref{shell_prop}.
		Consider the corresponding closed subscheme $Z = \multProj(S/J) \subset \PP$.
		Then, for every $1\le j\le k$ and $\fJ \subset[b_j]$, we have $\codim(\pp_{j,\fJ})=\codim(J)+|\fJ|$ where $\pp_{j,\fJ}:=\sum_{i\in \bbS}(\pp_j,x_{\ell_{j,i}})$. 
		Thus,  \autoref{hilb_pol_sum} provides a cancellation-free expression for $P_{S/J}(\ttt)$ as a sum of the polynomials $P_{S/\pp_j}(\ttt)$ and  $(-1)^{|\fJ|}P_{S/\pp_{j,\fJ}}(\ttt)$ {\rm(}see \autoref{rem_mon_prime}{\rm)}.
		In particular,  $(-1)^{\dim(Z)-|\bn|}\deg^\bn(Z)\ge 0$ for all $\bn\in \NN^p$.
	\end{corollary}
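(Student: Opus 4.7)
The plan is to combine \autoref{shell_prop} with \autoref{rem_mon_prime}. The decisive input from \autoref{shell_prop} is its final sentence: each element of $\mathfrak{P}_j$ is of the form $(\pp_j, x_{\ell_{j,i}})$ for a single \emph{variable} $x_{\ell_{j,i}}$ outside $\pp_j$, and these $b_j$ variables are pairwise distinct. Once this rigid structure is in hand, both the codimension statement and the sign alternation become almost formal, and the main bookkeeping step is translating codimensions into multi-indices $\bn$.

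First I would verify the codimension claim. Since $\pp_j$ is a monomial prime of codimension $c$, it is generated by a set of $c$ variables, say $V_j$. For $\fJ \subseteq [b_j]$, the ideal $\pp_{j,\fJ}$ is generated by $V_j$ together with the distinct variables $\{x_{\ell_{j,i}} : i \in \fJ\}$, none of which lie in $V_j$. Hence $\pp_{j,\fJ}$ is a monomial prime of codimension $c+|\fJ|=\codim(J)+|\fJ|$. Next, unpacking the definition of $D_j$ yields
\[
P_{S/\pp_j}(\ttt) - D_j(\ttt) \;=\; P_{S/\pp_j}(\ttt) + \sum_{\emptyset \neq \fJ \subseteq [b_j]} (-1)^{|\fJ|} \, P_{S/\pp_{j,\fJ}}(\ttt).
\]
By \autoref{rem_mon_prime}, each $P_{S/\pp_{j,\fJ}}(\ttt)$ is a single product of binomial coefficients, that is, a basis element of $\QQ[\ttt]$ (with coefficient $+1$) in the standard binomial basis. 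Summing \autoref{hilb_pol_sum} over $j$ thus presents $P_{S/J}(\ttt)$ as a signed $\ZZ$-linear combination of such basis elements, establishing the cancellation-free statement.

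It remains to trace the signs. By the codimension computation, the unique nonzero Hilbert coefficient of $P_{S/\pp_{j,\fJ}}(\ttt)$ is of type $\bn$ with $|\bn| = \dim\bigl(\multProj(S/\pp_{j,\fJ})\bigr) = \dim(Z) - |\fJ|$, and it contributes to $P_{S/J}(\ttt)$ with sign $(-1)^{|\fJ|} = (-1)^{\dim(Z)-|\bn|}$; note that this sign depends only on $|\bn|$, so contributions from different pairs $(j,\fJ)$ to the same basis element necessarily agree in sign. Summing these contributions gives $\deg^{\bn}(Z) = (-1)^{\dim(Z)-|\bn|}\cdot (\text{nonneg.\ integer})$, proving the sign alternation. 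There is no real obstacle here, the only subtle point being the correspondence between the codimension of $\pp_{j,\fJ}$ and the $|\bn|$-stratum it affects, which is handled by the explicit formula in \autoref{rem_mon_prime}.
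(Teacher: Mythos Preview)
Your proof is correct and follows essentially the same approach as the paper: both arguments rely on the final sentence of \autoref{shell_prop} to identify $\pp_{j,\fJ}$ as a monomial prime of codimension $c+|\fJ|$, invoke \autoref{rem_mon_prime} to express each $P_{S/\pp_{j,\fJ}}(\ttt)$ as a single binomial product, and then read off the sign from the identity $|\fJ|=\dim(Z)-|\bn|$. The paper is slightly terser (it treats the codimension statement and the expansion of $D_j$ as immediate), but the content is the same.
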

	\begin{proof}
		Only the last statement requires a proof. 
		For that, we note that for each $\fJ \subset [b_j]$, we obtain  
		$$
		P_{S/\pp_{j,\fJ}}(\ttt) \;=\; \binom{t_1+n_1(\pp_{j,\fJ})-1}{n_1(\pp_{j,\fJ})-1} \cdots \binom{t_p + n_p(\pp_{j,\fJ})-1}{n_p(\pp_{j,\fJ})-1},\quad \text{and} 
		$$
		$|\fJ|=\codim(\pp_{j,\fJ})-\codim(J)=\left(\dim(S/J)-p\right)-\left(\dim(S/\pp_{j,\fJ})-p\right) = \dim(Z) - \sum_{i=1}^p(n_i(\pp_{j,\fJ})-1)$.
	\end{proof}

	\begin{corollary}
		\label{cor_Mobius_shellable_ideals}
		Let $J \subset S$ be a square-free monomial ideal such that  $\Delta(J)$ is shellable. 
		Consider the corresponding closed subscheme $Z = \multProj(S/J) \subset \PP$.
		Then we have the equality
		$$\sum_{\fn\in \NN^p}\deg_\PP^\bn(Z)=1.
		$$
	\end{corollary}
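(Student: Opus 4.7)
The plan is to evaluate the Hilbert polynomial $P_{S/J}(\ttt)$ at the origin and exploit the cancellation-free formula provided by \autoref{shell_prop}. First I would observe that, since $\binom{0+n_i}{n_i}=1$ for every $n_i \ge 0$, the expansion of $P_{S/J}$ in the binomial basis $\binom{t_1+n_1}{n_1}\cdots\binom{t_p+n_p}{n_p}$ gives
\[
\sum_{\bn \in \NN^p} \deg_\PP^\bn(Z) \;=\; P_{S/J}(\mathbf{0}).
\]
So the task reduces to proving that $P_{S/J}(\mathbf{0}) = 1$.

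Next I would substitute the expression from \autoref{shell_prop},
\[
P_{S/J}(\ttt) \;=\; \sum_{j=1}^k \bigl(P_{S/\pp_j}(\ttt) - D_j(\ttt)\bigr),
\qquad D_j(\ttt) \;=\; \sum_{\emptyset \neq \fJ \subseteq [b_j]} (-1)^{|\fJ|-1} P_{S/\pp_{j,\fJ}}(\ttt),
\]
and evaluate at $\ttt = \mathbf{0}$. By \autoref{rem_mon_prime}, for a monomial prime $\pp$ one has $P_{S/\pp}(\mathbf{0}) = \prod_{i=1}^p \binom{n_i(\pp)-1}{n_i(\pp)-1}=1$ whenever $\pp$ contains no full coordinate-group $\mm_i$, so each $P_{S/\pp_j}(\mathbf{0}) = 1$ and each surviving summand $P_{S/\pp_{j,\fJ}}(\mathbf{0})=1$. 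A standard inclusion-exclusion identity then yields
\[
D_j(\mathbf{0}) \;=\; \sum_{\emptyset \neq \fJ \subseteq [b_j]} (-1)^{|\fJ|-1} \;=\; 1 - (1-1)^{b_j},
\]
which is $1$ when $b_j \ge 1$ and $0$ when $b_j = 0$. Hence the $j$-th summand $P_{S/\pp_j}(\mathbf{0}) - D_j(\mathbf{0})$ contributes exactly $1$ when $b_j = 0$ and $0$ when $b_j \ge 1$.

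Finally I would invoke the shellability hypothesis to pin down exactly when $b_j = 0$. For $j=1$, the set $\mathfrak{P}_1$ is vacuously empty, so $b_1 = 0$. For $j \ge 2$, shellability forces $\langle F_1,\ldots,F_{j-1}\rangle \cap \langle F_j \rangle$ to be pure of dimension $\dim(F_j)-1$, which produces at least one prime of the form $\pp_\ell + \pp_j$ of codimension $c+1$, i.e.\ $b_j \ge 1$. Summing over $j$ gives $P_{S/J}(\mathbf{0}) = 1 + 0 + \cdots + 0 = 1$, as desired.

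The main obstacle I expect is the control of $P_{S/\pp_{j,\fJ}}(\mathbf{0})$: a priori $\pp_{j,\fJ}$ could contain an entire coordinate-group $\mm_i$, in which case \autoref{rem_mon_prime} gives $P_{S/\pp_{j,\fJ}}=0$ rather than contributing the telescoping $1$ to the alternating sum. So the delicate step is to verify, using the structural information from \autoref{shell_prop} that the variables $x_{\ell_{j,1}},\ldots,x_{\ell_{j,b_j}}$ are distinct and lie outside $\pp_j$ together with the shellable configuration of the facets, that each $\pp_{j,\fJ}$ remains a \emph{proper} multigraded prime not containing any $\mm_i$; once this is in hand, the binomial telescoping above closes the argument.
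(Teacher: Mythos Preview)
Your argument follows the paper's route exactly: reduce to $P_{S/J}(\mathbf{0})=1$, plug in the shelling decomposition of \autoref{shell_prop}, and collapse each summand $P_{S/\pp_j}(\mathbf{0})-D_j(\mathbf{0})$ via a binomial identity so that only $j=1$ survives. The paper compresses your computation into the single line $P_{S/\pp_j}(\mathbf{0})-D_j(\mathbf{0})=(1-1)^{b_j}$ ``by the binomial formula'' and does not pause over the point you raise at the end.

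That point, however, is not merely a delicate verification to be filled in: it is a genuine obstruction, and the corollary as stated actually fails without an extra hypothesis. Take $p=1$, $S=\kk[x_0,x_1]$, $J=(x_0x_1)$. Then $\Delta(J)$ is two isolated vertices (pure of dimension $0$, trivially shellable), $Z=\multProj(S/J)$ consists of two reduced points in $\PP^1$, and $P_{S/J}(t)=2$, so $\sum_{\bn}\deg_\PP^\bn(Z)=2\neq 1$. In the shelling formula $b_2=1$ with $\pp_{2,\{1\}}=\pp_1+\pp_2=(x_0,x_1)$, which swallows the whole coordinate block; hence $P_{S/\pp_{2,\{1\}}}(\mathbf{0})=0$ rather than $1$, and $P_{S/\pp_2}(\mathbf{0})-D_2(\mathbf{0})=1-0=1$, not $(1-1)^1$. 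So neither your telescoping nor the paper's ``$(1-1)^{b_j}$'' is valid in this generality. The result does hold once one knows that every $\pp_{j,\fJ}$ stays relevant (i.e.\ $n_i(\pp_{j,\fJ})\ge 1$ for all $i$); in the paper's intended application to multigraded generic initial ideals of multiplicity-free varieties this is automatic, because the added variables $x_{\ell_{j,r}}$ have pairwise distinct degrees $\ee_\ell$ and always satisfy $n_\ell(\pp_j)\ge 2$, but for an arbitrary shellable square-free $J$ the hypothesis must be imposed.
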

	\begin{proof}
		It suffices to show that $P_{S/J}(\mathbf{0}) = 1$.
		As in \autoref{shell_prop}, suppose  $\pp_1,\ldots,\pp_k$ is an ordering of the minimal primes such that the corresponding facets with this order give a shelling of $\Delta(J)$.
		We have that $P_{S/\pp_1}(\mathbf{0})=1$.
		On the other hand, for all $j \ge 2$, we have that 
		$$
		P_{S/\pp_j}(\mathbf{0}) - D_j(\mathbf{0}) \;=\; (1-1)^{b_j} \;=\; 0;
		$$
		this follows directly from the binomial formula.
		Thus the claimed equality follows from \autoref{hilb_pol_sum}.
	\end{proof}

	\begin{definition}
		We say that a  sequence of lattice points 
		$\ba_1, \ldots, \ba_u\in \NN^p$ is an {\it increasing lattice path} if for every $2 \le j\le u$  we have  $\ba_{j+1}-\ba_{j}= \be_i$ for some $i\in [p]$. 
	\end{definition}

	\begin{corollary}
		\label{cor_path_connected}
		Let $J \subset S$ be a square-free monomial ideal such that  $\Delta(J)$ is shellable.
		Consider the corresponding closed subscheme $Z = \multProj(S/J) \subset \PP$.
		Then for every  $\bn\in \hsupp_\PP(Z)$ there is an increasing lattice path from $\bn$ to a vector   $\ba\in \msupp_\PP(Z)$. 
	\end{corollary}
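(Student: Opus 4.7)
The plan is to extract the required lattice path directly from the cancellation-free expansion of $P_{S/J}(\ttt)$ provided by \autoref{shell_prop} and \autoref{cor_positivity_shellable}. Setting $\pp_{j,\emptyset} := \pp_j$, this expansion reads
\[
P_{S/J}(\ttt) \;=\; \sum_{j=1}^{k}\,\sum_{\fJ \subseteq [b_j]} (-1)^{|\fJ|}\, P_{S/\pp_{j,\fJ}}(\ttt),
\]
and by \autoref{rem_mon_prime} each summand contributes a single nonzero Hilbert coefficient, located at the lattice point $\bn_{j,\fJ} := (n_1(\pp_{j,\fJ})-1,\ldots,n_p(\pp_{j,\fJ})-1)$. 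Since the variables $x_{\ell_{j,i}}$ in $\pp_{j,\fJ}$ lie outside $\pp_j$ by the definition of $\mathfrak{P}_j$, one readily checks
\[
\bn_{j,\fJ} \;=\; \ba_j \,-\, \sum_{i \in \fJ} \be_{d_{j,i}},
\]
where $\ba_j := \bn_{j,\emptyset}$ and $d_{j,i} \in [p]$ is determined by $\deg(x_{\ell_{j,i}}) = \be_{d_{j,i}}$. Adding the vectors $\be_{d_{j,i}}$ one at a time in any chosen order thus yields an increasing lattice path from $\bn_{j,\fJ}$ to $\ba_j$.

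Given $\bn \in \hsupp_\PP(Z)$, the coefficient $e_\bn(S/J)$ is a sum of $\pm 1$ contributions, one for each pair $(j,\fJ)$ with $\bn_{j,\fJ} = \bn$; its nonvanishing therefore forces at least one such pair to exist. Fixing any such $(j, \fJ)$, the previous paragraph already supplies an increasing lattice path from $\bn$ to $\ba_j$.

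It remains to verify $\ba_j \in \msupp_\PP(Z)$. Shellability makes $\Delta(J)$ pure, so every minimal prime $\pp_{j'}$ has the common codimension $\codim(J)$, whence $|\ba_j| = \dim(S/\pp_j) - p = \dim(Z)$. Moreover, any pair $(j',\fJ')$ contributing to $e_{\ba_j}(S/J)$ must satisfy $|\ba_{j'}| - |\fJ'| = |\ba_j| = \dim(Z)$, forcing $\fJ' = \emptyset$; hence $e_{\ba_j}(S/J)$ equals the (positive) number of facets with profile $\ba_j$, which shows $\ba_j \in \msupp_\PP(Z)$. The main obstacle here is almost entirely bookkeeping, since the substantive content has been absorbed into the earlier results of the section; the key conceptual observation is that the indexing $(j,\fJ) \leftrightarrow \bn_{j,\fJ}$ of the cancellation-free expansion literally encodes the desired increasing lattice paths.
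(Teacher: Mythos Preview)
The proposal is correct and follows essentially the same approach as the paper: both extract the lattice path from the cancellation-free expansion in \autoref{shell_prop} and \autoref{cor_positivity_shellable}, observing that the term indexed by $(j,\fJ)$ sits at a lattice point obtained from $\ba_j$ by subtracting unit vectors. Your write-up is simply a more detailed unpacking of the paper's three-sentence sketch, including the explicit verification that $\ba_j \in \msupp_\PP(Z)$.
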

	\begin{proof}
		This is a consequence of \autoref{hilb_pol_sum} and \autoref{rem_mon_prime}.
		Indeed, if $\bn \in \hsupp_\PP(Z) \setminus \msupp_\PP(Z)$, then the corresponding term $\binom{t_1+n_1}{n_1}\cdots \binom{t_p+n_p}{n_p}$ is induced by a sum $D_j(\ttt)$ (which is an inclusion-exclusion sum of ideals given as the sum of $\pp_j$ and a variable not in $\pp_j$).
		Furthermore, in \autoref{cor_positivity_shellable} we pointed out that the expression of \autoref{hilb_pol_sum} is cancellation-free.
	\end{proof}

	\section{Stalactites, caves, and polymatroids}
	\label{sect_combinatorial}
	
	This section includes the definitions and important properties of our main combinatorial constructions: {\it stalactites} and {\it caves} (see \autoref{def_stal_neigh} and \autoref{def_caves}). 
	In the main theorem of this section, we show that every  cave is a  generalized polymatroid 
	(see \autoref{thm_glue_poly}). 	We begin with some general considerations about finite sets of lattice points.

	\begin{notation}\label{notation_sec_cave}
		Let $ \mathscr{A} \subset \mathbb{N}^p $ be a finite subset.
		We say that $\mathscr{A}$ is {\it homogeneous} if $|\fa|=|\bb|$ for every  $\fa,\bb\in \mathscr{A}$. 
		For an arbitrary $\mathscr{A}$, we define $\max(\mathscr{A}):= \max\{|\fa|\mid \fa\in \mathscr{A}\}$ and the {\it homogenization} of $\mathscr{A}$ by 
		\begin{equation*}\label{eq_homogenization}
			\mathscr{A}^{\text{hom}} \;:=\; \left\{ (\fa,\, \max(\A)-|\fa|) \in \mathbb{N}^{p+1} ~\mid~ \fa\in \mathscr{A}\right\}. 
		\end{equation*}
		The {\it top} of the set $\mathscr{A}$ is defined as 
			$	\mathscr{A}^{\ttop} \;:=\; \{ \mathbf{a} \in \mathscr{A} ~\mid~ |\fa|= \max(\mathscr{A})\}. $
		For each $ \bb \in \NN^d $, we define the {\it $\bb$-truncation} of $\mathscr{A}$ by
		$\mathscr{A}_\bb := \{ \mathbf{a} \in \mathscr{A} ~\mid~ \fa\gs \bb\}$.
\end{notation}

\begin{example}
Here is an example of a $\bb$-truncation with $\bb=(2,0,0)$.
On the left hand side of  \autoref{shift}, we depict a set where we have marked the points in the top.
On the right hand side, we truncated by  removing  the points whose first coordinate are less than or equal to 1 (cf. \autoref{sec_running_example}).

\begin{figure}[h]
	\includegraphics[scale=0.5]{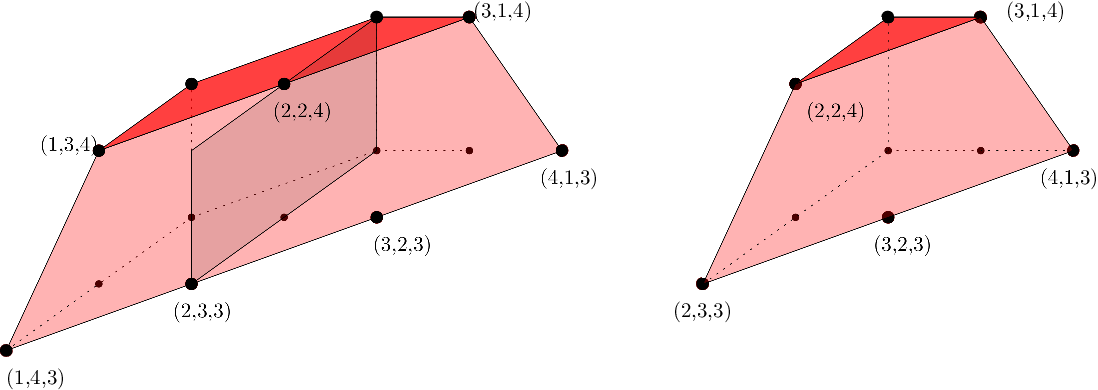}
	\caption{An example of a truncation.}
	\label{shift}
\end{figure}
\end{example}

We continue by recalling the definition of (discrete) base and generalized polymatroids.

\begin{definition}\label{definition_polymatroid}
A {\it base  polymatroid {\rm (}polymatroid{\rm )}} is a finite homogeneous subset $\mathscr{P} \subset \mathbb{N}^p$ satisfying the following property:
\begin{itemize}
	\item[] For every  $ \mathbf{u},\mathbf{v}\in \mathscr{P} $ and index $i\in [p]$ with $  [\mathbf{u}]_i >  [\mathbf{v}]_i $ there exists $ j \in [p]$ with $[\mathbf{u}]_j <  [\mathbf{v}]_j $  such that $ \mathbf{u} - \mathbf{e}_i + \mathbf{e}_j \in \mathscr{P} $.
\end{itemize}
\end{definition}

The following result by Herzog and Hibi provides a 
useful   property satisfied by base polymatroids.

\begin{theorem}[Symmetric Exchange, {\cite[Theorem 12.4.1]{HERZOG_HIBI}}]\label{prop_symmetric}
Let $\mathscr{P}$ be a base polymatroid. 
Then $\mathscr{P}$ satisfies the following property:
\begin{itemize}
	\item[] For every  $ \mathbf{u},\mathbf{v}\in \mathscr{P} $ and index $i\in [p]$ with $  [\mathbf{u}]_i >  [\mathbf{v}]_i $ there exists $ j \in [p]$ with $[\mathbf{u}]_j <  [\mathbf{v}]_j $  such that $ \mathbf{u} - \mathbf{e}_i + \mathbf{e}_j \in \mathscr{P} $  and $ \mathbf{v} - \mathbf{e}_j + \mathbf{e}_i \in \mathscr{P} $.
\end{itemize}

\end{theorem}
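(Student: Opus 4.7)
The plan is to proceed by induction on the half-distance $\delta(\mathbf{u},\mathbf{v}) := \tfrac{1}{2}\sum_{k=1}^{p} \big|[\mathbf{u}]_k - [\mathbf{v}]_k\big|$, which is a positive integer since $\mathscr{P}$ is homogeneous (so $|\mathbf{u}|=|\mathbf{v}|$) and $\mathbf{u}\neq \mathbf{v}$ by the hypothesis $[\mathbf{u}]_i>[\mathbf{v}]_i$.

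\textit{Base case.} When $\delta(\mathbf{u},\mathbf{v})=1$, the vectors $\mathbf{u}$ and $\mathbf{v}$ differ in exactly two coordinates, namely $i$ and a unique $j\neq i$ with $[\mathbf{u}]_j<[\mathbf{v}]_j$, and $\mathbf{v}=\mathbf{u}-\mathbf{e}_i+\mathbf{e}_j$. Consequently $\mathbf{u}-\mathbf{e}_i+\mathbf{e}_j=\mathbf{v}\in\mathscr{P}$ and $\mathbf{v}-\mathbf{e}_j+\mathbf{e}_i=\mathbf{u}\in\mathscr{P}$, so the symmetric exchange holds trivially.

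\textit{Inductive step.} First apply the one-sided exchange from \autoref{definition_polymatroid} to $(\mathbf{u},\mathbf{v})$ at the index $i$, obtaining some $j_0$ with $[\mathbf{u}]_{j_0}<[\mathbf{v}]_{j_0}$ and $\mathbf{u}':=\mathbf{u}-\mathbf{e}_i+\mathbf{e}_{j_0}\in\mathscr{P}$. If $\mathbf{v}-\mathbf{e}_{j_0}+\mathbf{e}_i\in\mathscr{P}$ as well, we are done with $j:=j_0$. Otherwise, observe that $\delta(\mathbf{u}',\mathbf{v})=\delta(\mathbf{u},\mathbf{v})-1$, so the inductive hypothesis is available for the pair $(\mathbf{u}',\mathbf{v})$.

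The main obstacle is translating a symmetric exchange for $(\mathbf{u}',\mathbf{v})$ back into one for $(\mathbf{u},\mathbf{v})$ at the prescribed index $i$. The strategy is to split on whether $[\mathbf{u}']_i>[\mathbf{v}]_i$ (equivalently $[\mathbf{u}]_i\geq [\mathbf{v}]_i+2$) or $[\mathbf{u}']_i=[\mathbf{v}]_i$. In the former subcase, I would apply the inductive hypothesis to $(\mathbf{u}',\mathbf{v})$ with the coordinate $i$ to obtain an index $j_1$ with $[\mathbf{u}']_{j_1}<[\mathbf{v}]_{j_1}$, $\mathbf{u}'-\mathbf{e}_i+\mathbf{e}_{j_1}\in\mathscr{P}$, and $\mathbf{v}-\mathbf{e}_{j_1}+\mathbf{e}_i\in\mathscr{P}$; the standing assumption $\mathbf{v}-\mathbf{e}_{j_0}+\mathbf{e}_i\notin\mathscr{P}$ then forces $j_1\neq j_0$, and a direct check shows that this $j_1$ also witnesses the symmetric exchange for $(\mathbf{u},\mathbf{v})$ with index $i$ since $[\mathbf{u}]_{j_1}=[\mathbf{u}']_{j_1}<[\mathbf{v}]_{j_1}$ and $\mathbf{u}-\mathbf{e}_i+\mathbf{e}_{j_1}$ equals $\mathbf{u}'-\mathbf{e}_i+\mathbf{e}_{j_1}$ up to the swap at $j_0$. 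In the latter subcase $[\mathbf{u}']_i=[\mathbf{v}]_i$ the natural induction at index $i$ is unavailable, and instead I would invoke the inductive hypothesis on $(\mathbf{v},\mathbf{u}')$ at the index $j_0$ (where $[\mathbf{v}]_{j_0}>[\mathbf{u}']_{j_0}$), producing some $k$ with $[\mathbf{v}]_k<[\mathbf{u}']_k$, $\mathbf{v}-\mathbf{e}_{j_0}+\mathbf{e}_k\in\mathscr{P}$, and $\mathbf{u}'-\mathbf{e}_k+\mathbf{e}_{j_0}\in\mathscr{P}$; careful case analysis on the coordinate $k$ (comparing it to $i$ and $j_0$) and chaining these exchanges then yield the desired $j$ for $(\mathbf{u},\mathbf{v})$, using crucially the failure assumption to exclude the bad possibilities.

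The delicate part of the argument is this case analysis: one must verify that in every sub-case at least one admissible $j$ emerges, and that the elementary exchanges produced by induction can be concatenated without leaving $\mathscr{P}$. This is precisely where the hypothesis that $\mathscr{P}$ is a \emph{base} polymatroid, rather than merely a set closed under the one-sided axiom, becomes essential.
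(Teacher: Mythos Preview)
The paper does not supply its own proof of this statement; it is quoted verbatim from \cite[Theorem 12.4.1]{HERZOG_HIBI}. So there is no ``paper's proof'' to compare against, and your proposal must stand on its own.

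Unfortunately, the inductive step contains a genuine gap. In your first subcase ($[\mathbf{u}']_i>[\mathbf{v}]_i$), the inductive hypothesis applied to $(\mathbf{u}',\mathbf{v})$ at index $i$ gives you $\mathbf{u}'-\mathbf{e}_i+\mathbf{e}_{j_1}\in\mathscr{P}$ and $\mathbf{v}-\mathbf{e}_{j_1}+\mathbf{e}_i\in\mathscr{P}$. But what you need is $\mathbf{u}-\mathbf{e}_i+\mathbf{e}_{j_1}\in\mathscr{P}$. Since $\mathbf{u}=\mathbf{u}'+\mathbf{e}_i-\mathbf{e}_{j_0}$, this target vector equals $\mathbf{u}'-\mathbf{e}_{j_0}+\mathbf{e}_{j_1}$, which is \emph{not} the vector $\mathbf{u}'-\mathbf{e}_i+\mathbf{e}_{j_1}$ that you actually obtained. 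Your phrase ``equals $\mathbf{u}'-\mathbf{e}_i+\mathbf{e}_{j_1}$ up to the swap at $j_0$'' is precisely the unjustified step: membership in $\mathscr{P}$ is not preserved under an arbitrary exchange of coordinates, so knowing one of these vectors lies in $\mathscr{P}$ says nothing about the other without a further argument. The second subcase is described even more loosely and suffers from the same issue: the exchanges you produce by induction live on $\mathbf{u}'$ or on $\mathbf{v}$, and there is no mechanism offered for transporting them back to $\mathbf{u}$.

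The actual proof in Herzog--Hibi is more subtle and does not proceed by a naive induction on $\delta(\mathbf{u},\mathbf{v})$ alone. One workable route is to pass through the submodular rank function $\rho$ defining $\mathscr{P}$: letting $A=\{j:[\mathbf{u}]_j<[\mathbf{v}]_j,\ \mathbf{u}-\mathbf{e}_i+\mathbf{e}_j\in\mathscr{P}\}$ and $B=\{j:[\mathbf{u}]_j<[\mathbf{v}]_j,\ \mathbf{v}-\mathbf{e}_j+\mathbf{e}_i\in\mathscr{P}\}$, one shows $A\cap B\neq\emptyset$ by a counting argument using the tight inequalities $\sum_{k\in S}[\mathbf{u}]_k=\rho(S)$ that characterize when an exchange at $j$ fails. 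Your induction scheme, as written, does not access this structure and cannot be completed without it.
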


%

\begin{definition}\label{def_generalized_polymatroid}
A {\it generalized  polymatroid {\rm (}g-polymatroid{\rm )}} is a subset $ \mathscr{G} \subset \mathbb{N}^p $ such that $  \mathscr{G}^{\text{hom}}  $ is a   polymatroid.

The defining symmetric exchange property of   polymatroids translates to the following two properties of g-polymatroids:
\begin{itemize}
	\item[\underline{\rm Exchange}:]\label{exch} For every pair $ (\mathbf{u},\mathbf{v})\in \mathscr{G}\times \mathscr{G} $ and index  $ i\in [p] $ such that $  [\mathbf{u}]_i >  [\mathbf{v}]_i $ at least one of the following conditions holds:  
	\begin{enumerate}[\rm (1)]
		\item There exists $ j \in [p]$ with $  [\mathbf{u}]_j <  [\mathbf{v}]_j $  such that $\mathbf{u} - \mathbf{e}_i + \mathbf{e}_j $ and
		$ \mathbf{v} - \mathbf{e}_j + \mathbf{e}_i$ are both in $\mathscr{G}$. 
		\item  $|\fu|> |\fv|$, and $\mathbf{u} - \mathbf{e}_i$ and $\mathbf{v} + \mathbf{e}_i$ are both in  $\mathscr{G}$. 
	\end{enumerate}
	\item[\underline{\rm Expansion}:]\label{expa}  For every pair $ (\mathbf{u},\mathbf{v})\in \mathscr{G}\times\mathscr{G} $  with $  |\mathbf{u}|<  |\mathbf{v}| $, there exists an index $j \in[p]$ such that $[\mathbf{u}]_j <  [\mathbf{v}]_j$, and $\mathbf{u}+\mathbf{e}_j$ and $\mathbf{v}-\mathbf{e}_j$ are both in $\mathscr{G}$.
\end{itemize}
\end{definition}
\begin{remark}\label{rem_only_one}
We note that, by \autoref{definition_polymatroid}, in order to show a set is a g-polymatroid, it is enough to verify the Exchange and Expansion conditions for $\fu$.
\end{remark}

\begin{definition}\label{def_stal_neigh}	
For a given polymatroid $\mathscr{P}\subset \NN^p$  and $\fu\in \mathscr{P}$, we define the following related objects:
\begin{enumerate}[\rm (i)]
	\item 	 We say that $\bv\in \mathscr{P}$ is the {\it neighbor} of $\fu$ in direction $(-\ell, j)$ if $\fv=\fu-\be_{\ell}+\be_j$ for some $\ell, j\in [p]$. 
	\item Let $\ell_1,\ldots, \ell_m$ be distinct elements of $\supp(\fu)$, we define the {\it stalactite} $\St(\fu; \ell_1, \ldots, \ell_m)$ to be the following set
	$$
	\St(\fu; \ell_1, \ldots, \ell_m):=\left\{\fu-\sum_{i\in  \bbS}\be_{\ell_i}\mid \bbS \subseteq [m]\right\}.
	$$
	Here,  $\sum_{i\in  \bbS}\be_{\ell_i}=0$ if $\bbS=\emptyset$.
	\item Given $\mathscr{V}\subset \mathscr{P}$, we consider the set $\mathfrak{I}$ of all $\ell\in [p]$ such that there is a neighbor of $\fu$ in $\mathscr{V}$ in direction $(-\ell, j)$ for some $j\in [p]$. We define $\St(\fu; \mathscr{V}):= \St(\fu; \mathfrak{I}).$
\end{enumerate}

\end{definition}

We are now ready to define the concept of caves.

\begin{definition}\label{def_caves} 	
A finite set $\mathscr{C}\subset \NN^p$ is a {\it cave} if for every truncation $\mathscr{A}:=\C_{\bb}$ with $\bb\in\NN^p$ the following conditions hold:
\begin{enumerate}[\rm (a)]
	\item $\mathscr{A}^{\ttop}$ is a polymatroid.
	\item For a given  order of the elements in $[p]$, we let $\prec$ be the induced lexicographical (lex) order on $\NN^p$, and we sort the elements of $\mathscr{A}^{\ttop}$ as:
	$\fa_1\prec\fa_2\prec \cdots \prec \fa_{|\mathscr{A}^{\ttop}|}.
	$
	Then, for every possible order on $[p]$, we have 
	\begin{equation}\label{formula_cave}
	\mathscr{A}=\bigcup_{i=1}^{|\mathscr{A}^{\ttop}|} \St(\fa_i; \{\fa_1,\ldots, \fa_{i-1}\}).
	\end{equation}
	\item If $\bb\neq \mathbf{0}$, then $\mathscr{A}$ is a g-polymatroid.
\end{enumerate} 
\end{definition}

\begin{example}\label{exam_stal_sec}
Consider the cave $\mathscr{A}$ whose points on the top are:
$
\mathscr{A}^{\ttop}=\{134,143,224,233,314,323,413\}
$, where by simplicity we denote the vector $(n_1,n_2,n_3)$ as $n_1n_2n_3$.  
If we order the elements of the set $[3]$  as $1<2<3$, then the natural lex order on $\mathscr{A}^{\ttop}$ is the one given above. The corresponding stalactites from \autoref{formula_cave} are presented in the following table:

\medskip
{\small
	\begin{center}
		\begin{tabular}{ |c|c|c|c|c|c|c|c|} 
			\hline
			Point & 134 & 143 & 224 & 233 & 314 & 323 & 413 \\
			\hline
			Stalactite & $ 134  $ & $ 143,133  $ & $  224,124  $ & $  233,223,133,123  $ & $ 314,214  $ & $ 323,313,223,213  $ & $  413,313  $ \\
			\hline
		\end{tabular}
	\end{center}
}
\medskip
\noindent On the left hand side of \autoref{fig:two_estalactita}, we  observe these  stalactites. The tails of the arrows are the points in $\mathscr{A}^{\ttop}$ and the tips are points in the corresponding stalactites (see \autoref{formula_cave}). 
On the right hand side of  \autoref{fig:two_estalactita}, we observe  the  stalactites when the elements of the set $[3]$ are ordered as $3<1<2$. Notice how the stalactites change but at the end they amount to the same cave.

\begin{figure}[h]
	\centering
	\includegraphics[scale=0.5]{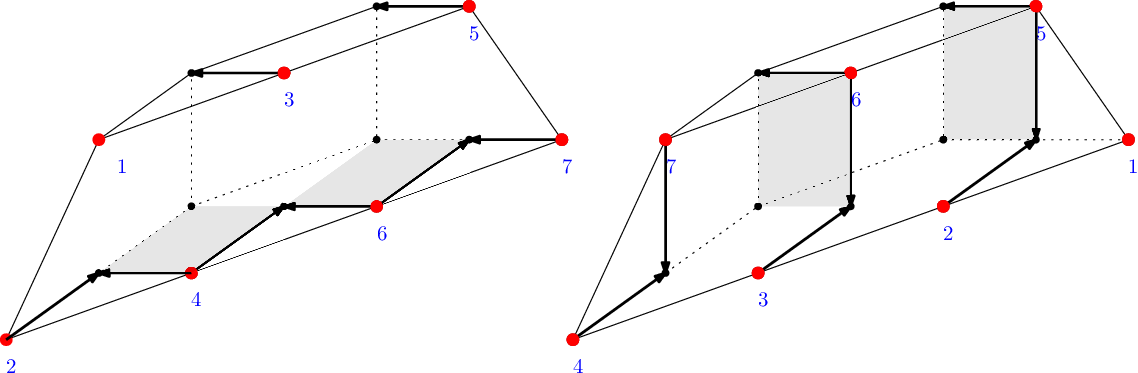}
	\caption{Two orderings of $[3]$ producing different stalactites.}
	\label{fig:two_estalactita}
\end{figure}
\end{example}	

\begin{notation}
\label{rem_lex_St}	
Henceforth, unless otherwise stated, we use the natural order on $[p]$. 
\end{notation}

Let $\C\subset \NN^p$ be a cave. The following facts, which follow from the definitions above, are used consistently in the proofs in this section.

\begin{fact}\label{fact_top}
For any  $\fa\in \mathscr{C}$ there exist $u:=\max(\mathscr{C})-|\fa|$ and  indices $\ell_1<\cdots<\ell_u$ in $[p]$ such that
$\fa^{\ttop}:=\fa+\be_{\ell_1}+\dots+\be_{\ell_u}\in \mathscr{C}^{\ttop}$ and $\fa\in \St(\fa^{\ttop}; \ell_1, \ldots, \ell_u)\subset \C$. 
Moreover, there exist $j_1,\ldots, j_u\in [p]$ with $\ell_i<j_i\ls p$ such that for each $i\in [u]$, the neighbor $\mathbf{b}_i:=\fa^{\ttop}-\be_{\ell_i}+\be_{j_i}$ of $\fa^{\ttop}$ belongs to $\mathscr{C}^{\ttop}$.
\end{fact}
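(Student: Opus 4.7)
The plan is to apply property (b) of the cave definition (\autoref{def_caves}) directly to the trivial truncation $\bb = \mathbf{0}$, so $\mathscr{A} = \mathscr{C}$, using the natural order $1 < 2 < \cdots < p$ as specified in \autoref{rem_lex_St}. This yields a lex enumeration $\fa_1 \prec \fa_2 \prec \cdots \prec \fa_N$ of $\mathscr{C}^{\ttop}$ together with the covering
\[
\mathscr{C} \;=\; \bigcup_{i=1}^{N} \St\bigl(\fa_i;\, \{\fa_1, \ldots, \fa_{i-1}\}\bigr).
\]
Since $\fa \in \mathscr{C}$, I can choose the smallest $i$ such that $\fa$ lies in the $i$-th stalactite, and define $\fa^{\ttop} := \fa_i$. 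By definition of the $\St(\fu; \mathscr{V})$ form, this stalactite equals $\St(\fa^{\ttop}; \fI)$, where $\fI \subseteq [p]$ is the set of indices $\ell$ admitting some $j \in [p]$ with $\fa^{\ttop} - \be_\ell + \be_j \in \{\fa_1, \ldots, \fa_{i-1}\}$.

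Next, I would use the linear independence of the $\be_\ell$'s to deduce that $\fa = \fa^{\ttop} - \sum_{\ell \in \fL} \be_\ell$ for a \emph{uniquely determined} subset $\fL \subseteq \fI$. Enumerating $\fL = \{\ell_1, \ldots, \ell_u\}$ in increasing order, one has $\fa^{\ttop} = \fa + \be_{\ell_1} + \cdots + \be_{\ell_u}$, and since $|\fa^{\ttop}| = \max(\mathscr{C})$ this forces $u = \max(\mathscr{C}) - |\fa|$, as required. The inclusion $\St(\fa^{\ttop}; \ell_1, \ldots, \ell_u) \subseteq \St(\fa^{\ttop}; \fI) \subseteq \mathscr{C}$ follows immediately, because every subset of $\{\ell_1, \ldots, \ell_u\}$ is a subset of $\fI$, and both forms of stalactite agree.

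For the moreover part, for each $r \in [u]$ the index $\ell_r$ belongs to $\fI$ by construction, so there exists $j_r \in [p]$ with $\mathbf{b}_r := \fa^{\ttop} - \be_{\ell_r} + \be_{j_r} \in \{\fa_1, \ldots, \fa_{i-1}\} \subseteq \mathscr{C}^{\ttop}$. The only nontrivial observation is that the lex condition $\mathbf{b}_r \prec \fa^{\ttop}$ forces $\ell_r < j_r$: indeed, if $j_r < \ell_r$ then the first coordinate where $\mathbf{b}_r$ and $\fa^{\ttop}$ differ is position $j_r$, at which $\mathbf{b}_r$ is strictly \emph{larger}, contradicting $\mathbf{b}_r \prec \fa^{\ttop}$; and $j_r = \ell_r$ is excluded because $\mathbf{b}_r \neq \fa^{\ttop}$. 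Hence $\ell_r < j_r \leqslant p$.

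I do not foresee any serious obstacle here; the statement is really a bookkeeping consequence of unpacking the two equivalent descriptions of the stalactite $\St(\fu; \mathscr{V})$ and checking that the lex order on $\NN^p$ interacts with a single swap $-\be_{\ell_r} + \be_{j_r}$ in the expected way. The one place to be careful is the distinction between the set $\fI$ (which may contain more indices than needed) and the precise subset $\fL$ that actually appears in the expression of $\fa$; choosing the latter ensures that we get exactly $u = \max(\mathscr{C}) - |\fa|$ indices, with no redundancy.
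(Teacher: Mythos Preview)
Your argument is correct and is exactly the unpacking the paper has in mind when it says these facts ``follow from the definitions above.'' The paper does not give a separate proof of this fact; it treats it as an immediate consequence of the stalactite/cave definitions together with the lex ordering convention of \autoref{rem_lex_St}, which is precisely what you carry out.
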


\begin{fact}\label{fact_rem_last}
Continuing with the notation in \autoref{fact_top}, assume further that $p\in \supp(\fa)$. If $\fa^{\ttop}$ has a neighbor $\bb:=\fa^{\ttop}-\be_p+\be_j\in \C^{\ttop}$, then $\fa-\be_p+\be_j\in \C$. 
To see this, we apply \exc on $(\bb, \bb_i)$ at index $\ell_i$ to obtain a neighbor $\mathbf{c}_i:=\bb-\be_{\ell_i}+\be_{\alpha_i}$ for $\alpha_i\in\{j_i,p\}$ and we notice that $\fa-\be_p+\be_j\in \St(\bb;\ell_1, \ldots, \ell_u)=\St(\bb;\{\bc_1, \ldots, \bc_u\})\subset \C$.  
\end{fact}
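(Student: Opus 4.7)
The plan is to exploit the symmetric exchange property of the polymatroid $\C^{\ttop}$ together with the stalactite decomposition of $\C$ guaranteed by the cave axioms. The key telescopic identity $\fa-\be_p+\be_j = \bb-\be_{\ell_1}-\cdots-\be_{\ell_u}$ (immediate from $\fa^{\ttop}=\fa+\be_{\ell_1}+\cdots+\be_{\ell_u}$) already places $\fa-\be_p+\be_j$ inside $\St(\bb;\ell_1,\ldots,\ell_u)$, so the entire task reduces to verifying that this stalactite sits inside $\C$. Before proceeding I would record that $\ell_i<p$ for every $i$ (since $\ell_i<j_i\leq p$) and that $j\neq p$ (since $\bb$ is a genuine neighbor of $\fa^{\ttop}$).

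The central step is to apply the symmetric exchange from \autoref{prop_symmetric} to the pair $(\bb,\bb_i)\in\C^{\ttop}\times\C^{\ttop}$ at index $\ell_i$. Because $\ell_i\neq p$, one has $[\bb]_{\ell_i}\in\{[\fa^{\ttop}]_{\ell_i},\,[\fa^{\ttop}]_{\ell_i}+1\}$ while $[\bb_i]_{\ell_i}=[\fa^{\ttop}]_{\ell_i}-1$, so the exchange hypothesis is met. The coordinate-by-coordinate analysis of $\bb-\bb_i=-\be_p+\be_j+\be_{\ell_i}-\be_{j_i}$, using $j\neq p$, $\ell_i\neq p$ and $\ell_i<j_i$, shows that the only indices $\alpha$ at which $[\bb]_\alpha<[\bb_i]_\alpha$ lie in $\{j_i,p\}$. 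This bookkeeping is where I expect the main effort to go, but once it is done, exchange delivers a point $\bc_i:=\bb-\be_{\ell_i}+\be_{\alpha_i}\in \C^{\ttop}$ with $\alpha_i\in\{j_i,p\}$.

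Next I would verify that each $\bc_i$ precedes $\bb$ strictly in the natural lex order on $\NN^p$. Since $\bc_i-\bb=-\be_{\ell_i}+\be_{\alpha_i}$ with $\alpha_i\in\{j_i,p\}$ and $\ell_i<j_i\leq p$, the first coordinate on which they differ is $\ell_i$, where $\bc_i$ is smaller; therefore $\bc_i\prec\bb$. In particular, if we enumerate $\C^{\ttop}=\{\fa_1\prec\cdots\prec\fa_N\}$ and let $k$ be the index with $\bb=\fa_k$, then $\bc_1,\ldots,\bc_u\in\{\fa_1,\ldots,\fa_{k-1}\}$.

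To conclude, since the $\ell_i$ are distinct and each $\bc_i$ is a neighbor of $\bb$ only in the unique direction $(-\ell_i,\alpha_i)$, the set $\mathfrak{I}$ of \autoref{def_stal_neigh} associated with $\St(\bb;\{\bc_1,\ldots,\bc_u\})$ equals $\{\ell_1,\ldots,\ell_u\}$. Monotonicity of the stalactite in its second argument then yields
\[
\St(\bb;\ell_1,\ldots,\ell_u)\;=\;\St(\bb;\{\bc_1,\ldots,\bc_u\})\;\subset\;\St(\fa_k;\{\fa_1,\ldots,\fa_{k-1}\})\;\subset\;\C,
\]
where the last containment is the cave property (b) of \autoref{def_caves} applied to the trivial truncation $\mathbf{0}\in\NN^p$. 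Combined with the opening telescopic identity, this gives $\fa-\be_p+\be_j\in\C$, as required.
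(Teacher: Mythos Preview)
Your proof is correct and follows exactly the approach sketched in the paper's terse justification: you verify the exchange hypothesis $[\bb]_{\ell_i}>[\bb_i]_{\ell_i}$, pin down $\alpha_i\in\{j_i,p\}$ by the coordinate analysis of $\bb-\bb_i$, observe $\bc_i\prec\bb$ since $\ell_i<\alpha_i$, and conclude via the cave axiom (b) and the monotonicity of $\St(\bb;\,\cdot\,)$. The only (harmless) difference is that you invoke the Symmetric Exchange of \autoref{prop_symmetric} while the basic polymatroid exchange of \autoref{definition_polymatroid} already suffices, since you only use the conclusion $\bb-\be_{\ell_i}+\be_{\alpha_i}\in\C^{\ttop}$.
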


\begin{fact}\label{fact_sup}
\exc and \expa hold for any $\fa,\bb\in \mathscr{C}$ with $\supp(\fa)\cap \supp(\bb)\neq\emptyset$. Indeed, this follows as $\C_{\be_k}$ is a g-polymatroid for any $k\in \supp(\fa)\cap \supp(\bb)$.
\end{fact}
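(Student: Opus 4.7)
The plan is a direct reduction to property (c) in Definition \ref{def_caves}. Given $\fa, \bb \in \mathscr{C}$ with $\supp(\fa) \cap \supp(\bb) \neq \emptyset$, I would fix any index $k$ in this intersection, so that $[\fa]_k \geq 1$ and $[\bb]_k \geq 1$; this places both vectors simultaneously inside the truncation $\mathscr{C}_{\be_k}$.

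Since $\be_k \neq \mathbf{0}$, property (c) of Definition \ref{def_caves} guarantees that $\mathscr{C}_{\be_k}$ is a g-polymatroid. Applying the Exchange and Expansion axioms of Definition \ref{def_generalized_polymatroid} to the pair $(\fa, \bb)$, viewed as a pair in $\mathscr{C}_{\be_k}$, produces the required witness elements---any of $\fa - \be_i + \be_j$ and $\bb - \be_j + \be_i$ (for Exchange case (1)), $\fa - \be_i$ and $\bb + \be_i$ (for Exchange case (2)), or $\fa + \be_j$ and $\bb - \be_j$ (for Expansion)---all lying inside $\mathscr{C}_{\be_k}$. Because $\mathscr{C}_{\be_k} \subset \mathscr{C}$, every such witness also belongs to $\mathscr{C}$, and therefore the Exchange and Expansion conditions hold for $(\fa, \bb)$ in $\mathscr{C}$ itself, as required.

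There is essentially no technical obstacle: the argument is a two-line reduction built into the definition of a cave. The only point worth highlighting is that the hypothesis $\supp(\fa) \cap \supp(\bb) \neq \emptyset$ is precisely what allows us to choose a common standard basis vector $\be_k$ with respect to which both $\fa$ and $\bb$ survive truncation; condition (c) in Definition \ref{def_caves} is then exactly what is needed to transport the g-polymatroid axioms from $\mathscr{C}_{\be_k}$ back to $\mathscr{C}$.
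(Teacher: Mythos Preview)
Your proposal is correct and matches the paper's own argument exactly: the paper's justification is the single clause ``this follows as $\C_{\be_k}$ is a g-polymatroid for any $k\in \supp(\fa)\cap \supp(\bb)$,'' and you have simply unpacked that sentence in full detail.
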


In the following lemmas we prove some technical results about caves which are important for the proof of the main result of this section.

\begin{lemma}\label{lem_puntita}
Let $\C\subset \NN^p$ be a cave. 
Then for every $\mathbf{\fu}\in \C$ we have  $\fu+\be_i\in \C$ for every $i\in [p]$ such that $\C_{\fu+\be_i}\neq \emptyset$. 
\end{lemma}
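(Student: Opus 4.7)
The plan is to pick any $\bw\in \C_{\fu+\be_i}$, so that $\bw\ge \fu+\be_i$ and in particular $\bw\ge \fu$ with $[\bw]_i>[\fu]_i$, and then to apply the \exc axiom of an appropriate g-polymatroid to the ordered pair $(\bw,\fu)$ at index $i$. The elementary observation driving the whole proof is that $\bw\ge \fu$ forbids the first alternative of \exc---which would require a coordinate $j$ with $[\bw]_j<[\fu]_j$---so the second alternative must hold, and it produces exactly $\fu+\be_i$ as a member of the g-polymatroid in question.

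For the main case $\fu\neq \mathbf{0}$ I would pick any $k_0\in \supp(\fu)$. Since $\bw\ge \fu\ge \be_{k_0}$, both $\fu$ and $\bw$ lie in the truncation $\C_{\be_{k_0}}$, which is a g-polymatroid by axiom~(c) of the cave definition (as $\be_{k_0}\neq \mathbf{0}$). The argument above then delivers $\fu+\be_i\in \C_{\be_{k_0}}\subseteq \C$, and the containment $\fu+\be_i\ge \be_{k_0}$ needed to stay inside the truncation is automatic.

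The boundary case $\fu=\mathbf{0}$ is the main obstacle, since no natural truncation $\C_{\be_{k_0}}$ contains $\mathbf{0}$ and, at this stage, $\C$ itself is not yet known to be a g-polymatroid. Here I would work inside the polymatroid $\C^{\ttop}$ furnished by axiom~(a). By \autoref{fact_top} applied to $\mathbf{0}$ there is a $0/1$ top element $\mathbf{0}^{\ttop}=\sum_{\ell\in \fI_0}\be_\ell\in \C^{\ttop}$ whose Boolean stalactite $\St(\mathbf{0}^{\ttop};\fI_0)$ lies inside $\C$; in particular $\be_\ell\in \C$ for every $\ell\in \fI_0$. If $i\in \fI_0$ we are finished. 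Otherwise, choose a top $\bw^{\ttop}\in \C^{\ttop}$ dominating $\bw$ (so $[\bw^{\ttop}]_i\ge 1$) and apply \autoref{prop_symmetric} to $(\bw^{\ttop},\mathbf{0}^{\ttop})$ at index $i$ to extract $j\in \fI_0$ with $[\bw^{\ttop}]_j=0$ and $\mathbf{0}^{\ttop}-\be_j+\be_i\in \C^{\ttop}$. Relabeling so that $j$ becomes the last coordinate, I would then apply \autoref{fact_rem_last} to the element $\fa=\be_j\in \C$---whose top from \autoref{fact_top} may be taken to be $\mathbf{0}^{\ttop}$, since $\be_j$ lies in the Boolean stalactite of $\mathbf{0}^{\ttop}$---the required neighbor $\mathbf{0}^{\ttop}-\be_j+\be_i\in \C^{\ttop}$ is exactly what symmetric exchange just furnished, and the conclusion reads $\be_j-\be_j+\be_i=\be_i\in \C$. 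The delicate point, which I expect to be the main technical difficulty to write out carefully, is matching the ``top and stalactite'' data of \autoref{fact_rem_last} with the element $\be_j$ and the swap produced by symmetric exchange in $\C^{\ttop}$.
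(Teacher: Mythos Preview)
Your argument is correct, and for the case $\fu\neq\mathbf{0}$ it takes a genuinely different (and cleaner) route than the paper. The paper first reduces to $\fu=\mathbf{0}$ by passing to the truncation $\C_\fu$ (implicitly using that a translate of a truncation of a cave is again a cave), and then runs the full Fact~5.5/Fact~5.6 machinery. You instead observe that when $\fu\neq\mathbf{0}$ one may pick $k_0\in\supp(\fu)$ so that both $\fu$ and the witness $\bw$ lie in the g-polymatroid $\C_{\be_{k_0}}$, and then a single application of \excs to $(\bw,\fu)$ at index $i$ forces option~(2) because $\bw\ge\fu$ rules out option~(1). This is a nice shortcut that bypasses the ``truncation is a cave'' check.

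For the boundary case $\fu=\mathbf{0}$ your strategy coincides with the paper's: both build a $0/1$ top element $\mathbf{0}^{\ttop}$ via \autoref{fact_top}, produce the needed neighbor in $\C^{\ttop}$ by symmetric exchange, and then invoke \autoref{fact_rem_last} after reordering so that the relevant index becomes last. Your version is slightly more streamlined in that you go directly from the symmetric exchange to \autoref{fact_rem_last}, whereas the paper first treats the auxiliary indices $j_1,\ldots,j_r$. The ``delicate point'' you flag---checking that, after the reordering, the data $\fa=\be_p$, $\fa^{\ttop}=\mathbf{0}^{\ttop}$ together with the appropriate neighbors $\bb_i$ satisfying $\ell_i<j_i$ in the new order really fit the hypotheses of \autoref{fact_rem_last}---is the same issue the paper glosses over; it is routine but does require care, exactly as you anticipate.
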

\begin{proof}
By passing to the truncation $\C_\bu$, it suffices to consider the case $\fu=\mathbf{0}$. 
Set $r= \max(\C)$. Let $\ell_1, \ldots,\ell_r$ and $j_1,\ldots, j_r$ be the indices obtained by applying \autoref{fact_top} to $\fa=\mathbf{0}$. 
In particular, we get that $r \le p$, $\bn:= \be_{\ell_1}+\dots+\be_{\ell_r}\in \C^{\ttop}$, and $\St(\bn; \ell_1,\ldots, \ell_r)\subset \C$. It follows that $\be_{\ell_i}\in \C$ for every $i\in [r]$. 
Now, for each $k\in [r]$, we reorder $[p]$ as $(1,\ldots,\ell_k-1,\ell_k+1,\ldots,p,\ell_k)$ and use \autoref{fact_rem_last} with $\fa=\be_{\ell_k}$,  
$\fa^{\ttop} =\bn$, and $\bb=\bn-\be_{\ell_k}+\be_{j_k}$ to show $\be_{j_k}\in \C$ for each $k$.

To complete the proof, consider an index $i\in[p] \setminus \{\ell_1, \ldots, \ell_r, j_1,\dots,j_r\}$ 
such that  $\C_{\be_i}\neq \emptyset$.
Then, by \autoref{fact_top} there exists 
$\fu^{\ttop}\in\C^{\ttop}$ 
with $i\in\supp(\bu^\ttop)$.
From \excs on the pair ($\fu^{\ttop}$, $\fn$) at index $i$, we obtain a point 
$\fn'=\fn-\be_{\ell_m}+\be_i \in \C^{\ttop}$
for some $\ell_m$. 
If we reorder  $[p]$ as $(1, \ldots, i-1, i+1, \ldots, p, i)$, 
we obtain that $\fn'$ is smaller than $\fn$ in the lex order. 
Thus, if we replace $j_m$ by $i$  at the start of the  proof we  obtain that  $\ee_i \in \C$, finishing the proof.
\end{proof}

\begin{lemma}\label{claim}
	Let $ \mathscr{C} \subset\mathbb{N}^p$ be a cave. 
	Let $(\bu,\, \bv)\in \mathscr{C}\times \C$  be such that $\supp(\bu)\cap \supp(\bv)=\emptyset$ and let $i\in \supp(\bu)$. 
	If $ |\fv|\ge2 $, then either $\bv-\be_j+\be_i \in \mathscr{C}$ for some $j\in\supp(\bv)$ or $\bv+\be_i\in \mathscr{C}$. 
\end{lemma}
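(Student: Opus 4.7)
The plan is to lift $\fu$ and $\fv$ to top elements of $\mathscr{C}$ via \autoref{fact_top}, use the symmetric exchange available in the polymatroid $\mathscr{C}^{\ttop}$ to produce a neighbor $\bw$ of $\fu^{\ttop}$, and then descend back to the level of $\fu$ by invoking the Exchange axiom of a g-polymatroid through \autoref{fact_sup}.

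Concretely, I would write $\fu^{\ttop} = \fu + \be_{\ell_1} + \cdots + \be_{\ell_u} \in \mathscr{C}^{\ttop}$ with stalactite indices $\ell_1 < \cdots < \ell_u$ (where $u = \max(\mathscr{C}) - |\fu|$), and pick any top extension $\fv^{\ttop} \in \mathscr{C}^{\ttop}$ of $\fv$. If $i \in \{\ell_1, \ldots, \ell_u\}$, then $\fu + \be_i$ is exactly the element of $\St(\fu^{\ttop}; \ell_1, \ldots, \ell_u)$ obtained by deleting every $\be_{\ell_k}$ with $\ell_k \neq i$, and conclusion (b) is immediate.

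Otherwise $[\fu^{\ttop}]_i = 0 < [\fv^{\ttop}]_i$, so \autoref{prop_symmetric} applied in $\mathscr{C}^{\ttop}$ to the pair $(\fv^{\ttop}, \fu^{\ttop})$ at index $i$ produces $j$ with $[\fu^{\ttop}]_j > [\fv^{\ttop}]_j$ such that both $\fv^{\ttop} - \be_i + \be_j$ and $\bw := \fu^{\ttop} - \be_j + \be_i$ lie in $\mathscr{C}^{\ttop} \subset \mathscr{C}$. A direct coordinate comparison shows $[\bw]_m \geq [\fu]_m$ for every $m \neq j$, and $\supp(\fu) \cap \supp(\bw)$ equals $\supp(\fu)$ unless $j \in \supp(\fu)$ with $[\fu]_j = 1$, in which case it equals $\supp(\fu) \setminus \{j\}$. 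The hypothesis $|\fu| > 1$ guarantees this intersection is nonempty in every case, so \autoref{fact_sup} yields the Exchange axiom for the pair $(\bw, \fu)$.

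I would finish by applying Exchange to $(\bw, \fu)$ at index $i$: since $[\bw]_m \geq [\fu]_m$ off $m = j$, option~(1) can only be realized with the swap index equal to $j$, producing $\fu - \be_j + \be_i \in \mathscr{C}$ (conclusion (a)); alternatively, option~(2) directly yields $\fu + \be_i \in \mathscr{C}$ (conclusion (b)). The main obstacle I anticipate is the verification that $\supp(\fu) \cap \supp(\bw) \neq \emptyset$ so that \autoref{fact_sup} is actually available; this is precisely where the hypothesis $|\fu| > 1$ is indispensable, since otherwise one could have $\fu = \be_j$ with $\supp(\fu) \cap \supp(\bw) = \emptyset$, blocking the passage from the polymatroid-level exchange in $\mathscr{C}^{\ttop}$ down to the Exchange axiom inside $\mathscr{C}$.
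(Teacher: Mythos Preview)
Your proof is correct and follows essentially the same route as the paper: lift $\fu$ and $\fv$ to $\mathscr{C}^{\ttop}$ via \autoref{fact_top}, perform a symmetric exchange in the polymatroid $\mathscr{C}^{\ttop}$ to obtain a top element $\bw$ with $i\in\supp(\bw)$ and $\supp(\fu)\cap\supp(\bw)\neq\emptyset$, and then invoke \autoref{fact_sup} to apply the Exchange axiom to the pair $(\bw,\fu)$ at index $i$. The only cosmetic differences are that the paper phrases the reduction as ``it suffices to find $\bv'\in\mathscr{C}^{\ttop}$ with $i\in\supp(\bv')$ and $\supp(\bu)\cap\supp(\bv')\neq\emptyset$'' and leaves the final Exchange step implicit, whereas you carry out that step explicitly and identify the forced swap index; and in the case $i\in\supp(\fu^{\ttop})$ the paper sets $\bv'=\fu^{\ttop}$ and again appeals to \autoref{fact_sup}, while you read off $\fu+\be_i$ directly from the stalactite.
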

\begin{proof}
	Due to \autoref{fact_sup}, it is enough to show  that there exists $\bv'\in\mathscr{C}^{\text{top}}$ such that $i\in\supp(\bv')$ and $\supp(\bv)\cap\supp(\bv')\neq\emptyset$; indeed, \excs on $(\bv',\bv)$ at index $i$ yields the result.
	By \autoref{fact_top}, there exists $\fv^{\ttop}\in \C^{\ttop}$ such that $\fv\ls \fv^{\ttop}$. 
	If $i\in \supp(\fv^{\ttop})$, we can take $\fv'=\fv^{\ttop}$. 
	Otherwise, consider $\fu^{\ttop}\in \C^{\ttop}$ such that $\fu\ls \fu^{\ttop}$. 
	By \excs  on $(\fu^{\ttop},\, \fv^{\ttop})$ at index $i$, we obtain $\fv^{\ttop}-\be_j+\be_i\in \C^{\ttop}$ for some $ j\in \supp(\fv^{\ttop})$. 
	Moreover, $\supp(\bv)\cap\supp(\fv^{\ttop}-\be_j+\be_i)\neq \emptyset$ since $|\bv|\ge 2$.
	Therefore, we can take $\fv'=\fv^{\ttop}-\be_j+\be_i$.
\end{proof}

\begin{lemma}[\expa for caves]
	\label{lem_expansion}
	Let $\mathscr{C} \subset \NN^p$ be a cave and $(\bu, \bv) \in \mathscr{C} \times \mathscr{C}$ such that $|\bu| < |\bv|$.
	Then there exists $j \in [p]$ such that $[\bu]_j < [\bv]_j$ and $\bu + \ee_j \in \C$ {\rm(}see \autoref{rem_only_one}{\rm)}.
\end{lemma}
\begin{proof}
	If $\bu=\mathbf{0}$, the result  follows from \autoref{lem_puntita}.  
	Thus, we assume $|\bu| \ge 1$ and $|\bv| \ge 2$.
	Due to \autoref{fact_sup}, we may also assume $\supp(\bu)\cap \supp(\bv)=\emptyset$.
	Let $i \in \supp(\bv)$.
	By applying \autoref{claim} to the pair $ (\bu , \bv) $  and the index $ i \in \supp(\bu)$, we obtain that either $ \mathbf{v}':=\bv-\be_\ell+\be_i \in \C$  or $\mathbf{v}':=\bv+\be_i \in \C$ for some $\ell\in \supp(\fv)$. 
	In either case $i\in \supp(\fv')\cap\supp(\fu)$.
	 Notice $|\fv'|\gs |\fv|>|\fu|$ and then by \autoref{fact_sup} we apply \expa on $(\fu,\, \fv')$ to conclude that $[\fu]_j<[\fv']_j$ for some $j\in [p]$ and $\fu+\fe_j\in  \C$. 
	Since $i\not\in \supp(\fv)$, we must have $j\neq i$ and then $[\fv']_j\ls [\fv]_j$, which finishes the proof.
\end{proof}

\begin{lemma}[\excs for caves]
	\label{lem_exchange}
		Let $\mathscr{C} \subset \NN^p$ be a cave,  $(\bu, \bv) \in \mathscr{C} \times \mathscr{C}$, and $i \in [p]$ be an index 
		such that $[\bu]_i > [\bv]_i$. 
		Then at least one of the following conditions holds:
		\begin{enumerate}[\rm(1)]
			\item There exists $ j \in [p]$ with $  [\mathbf{u}]_j <  [\mathbf{v}]_j $  such that $\mathbf{u} - \mathbf{e}_i + \mathbf{e}_j \in \mathscr{C}$. 
			\item  $|\fu|> |\fv|$ and $\mathbf{u} - \mathbf{e}_i \in \mathscr{C}$. 
		\end{enumerate} 
	{\rm(}See \autoref{rem_only_one}.{\rm)}
\end{lemma}
\begin{proof}
	By \autoref{fact_sup}, we may assume $\supp(\bu)\cap \supp(\bv)=\emptyset$.
	After reordering the elements in $[p]$, we can assume $i=p$.
	Let $u = \max(\mathscr{C})-|\bu|$ and $v = \max(\mathscr{C})-|\bv|$.
	From \autoref{fact_top}, we choose top elements $\bu^\ttop = \bu + \be_{\ell_1}+\dots+\be_{\ell_{u}} \in \mathscr{C}^\ttop$ and $\bv^\ttop = \bv + \fe_{h_1}+\cdots+\fe_{h_v} \in \mathscr{C}^\ttop$.
	By applying \excs on $(\bu^\ttop, \bv^\ttop)$ at index $p$, we get $j \in [p]$ such that $[\bu^\ttop]_j<[\bv^\ttop]_j$ and $\bu^\ttop-\ee_p+\ee_j \in \mathscr{C}^\ttop$.
	From \autoref{fact_rem_last}, we obtain $\bu-\ee_p+\ee_j \in \mathscr{C}$.
	If $j \in \supp(\bv)$, then condition (1) holds. 
	
	For the rest of the proof, we assume $j \notin \supp(\bv)$, and so $[\bv^\ttop]_j = 1$ and $[\bu^\ttop]_j = 0$.
	Since $j \not\in \{\ell_1,\ldots,\ell_u, p\}$, after reordering $[p]$ as $(1,\ldots,j-1,j+1, \ldots,p, j)$ and considering the respective lex order, the point $\bu^\ttop$ keeps the neighbors in directions $(-\ell_1,j_1),\ldots,(-\ell_u,j_u)$ and gets the neighbor $\bu^\ttop-\ee_p+\ee_j$ in direction $(-p,j)$.
	It follows by definition that 
	$$
	\St(\bu^{\ttop}; \ell_1, \ldots, \ell_u,p) \;\subset\; \C.
	$$
	In particular, $\bu-\ee_p \in \C$.
	If $|\bu| > |\bv|$, then condition (2) holds. 
	Therefore we assume $|\bu| \le |\bv|$.
	Since $|\bu-\ee_p| < |\bv|$, \autoref{lem_expansion} yields an index $j' \in \supp(\bv)$ such that $\bu-\ee_p+\ee_{j'} \in \C$, and so condition (1) holds. 
	This concludes the proof.
\end{proof}

The following is the main theorem of this section. 

\begin{theorem}[Gluing g-polymatroids]\label{thm_glue_poly}
Every cave $ \mathscr{C} \subset \mathbb{N}^p $ is a g-polymatroid. 
\end{theorem}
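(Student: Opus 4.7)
The plan is to verify the \exc and \expa conditions of \autoref{def_generalized_polymatroid} for $\mathscr{C}$ directly. By \autoref{rem_only_one}, it suffices to check these conditions only for $\bu$. Moreover, \autoref{fact_sup} (which is a consequence of condition (c) of \autoref{def_caves} applied to a truncation $\mathscr{C}_{\be_k}$ at a common coordinate) immediately settles both properties whenever $\supp(\bu)\cap \supp(\bv)\neq\emptyset$. Consequently, the entire content of the theorem lies in the disjoint-support case, where the technical results \autoref{lem_puntita}, \autoref{lemma1}, and \autoref{claim} must be combined.

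For \expa in the disjoint-support case with $|\bu|<|\bv|$, I would pick an arbitrary index $i\in \supp(\bv)$ (necessarily nonempty since $|\bv|>|\bu|\geq 0$), for which $[\bu]_i=0<[\bv]_i$. When $|\bu|\geq 2$, \autoref{claim} yields one of two conclusions: if $\bu+\be_i\in \mathscr{C}$, this is already \expa with $j=i$; if instead $\bu-\be_j+\be_i\in\mathscr{C}$ for some $j\in\supp(\bu)$, then the new pair shares the coordinate $i$ in its supports and the argument reenters the overlapping regime controlled by \autoref{fact_sup}, from which one traces back to produce $\bu+\be_i$ via \autoref{lem_puntita}. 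The small cases $|\bu|=0$ and $|\bu|=1$ are handled separately: $\bu=\mathbf{0}$ via \autoref{lem_puntita} applied to $\mathscr{C}_{\be_i}\neq\emptyset$ (guaranteed by $\bv\geq \be_i$), and $\bu=\be_k$ via \autoref{lemma1}.

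For \exc in the disjoint-support case with $[\bu]_i>[\bv]_i$ (forcing $i\in \supp(\bu)$), the argument must work at the top level, since the stalactite decomposition coming from \autoref{fact_top} naturally permits only removing coordinates outside $\supp(\bu)$ from $\bu^{\ttop}$. The idea is to lift $\bu$ and $\bv$ to elements $\bu^{\ttop},\bv^{\ttop}\in\mathscr{C}^{\ttop}$, apply \autoref{prop_symmetric} (Symmetric Exchange in the polymatroid $\mathscr{C}^{\ttop}$) at the index $i$ to obtain $\bu^{\ttop}-\be_i+\be_{j'}\in\mathscr{C}^{\ttop}$ for some suitable $j'\in\supp(\bv^{\ttop})$, and then exploit the freedom of the lex order in clause (b) of \autoref{def_caves} together with \autoref{fact_rem_last} to pull this top-level exchange down to $\bu$ itself. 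In the degenerate case $\bv=\mathbf{0}$, one is forced into option (2) of \exc and must exhibit $\bu-\be_i\in\mathscr{C}$, which again proceeds by reordering $[p]$ so that $i$ appears as one of the descent indices $\ell_s$ in a stalactite decomposition of $\bu$.

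The main obstacle, as I see it, will be \exc in the disjoint case: the cave axioms give robust control over upward movement within a stalactite but provide no direct mechanism to delete a coordinate that already lies in $\supp(\bu)$. Overcoming this requires the careful, $i$-aware choice of lex order alluded to above, so that the exchange obtained from \autoref{prop_symmetric} on $\mathscr{C}^{\ttop}$ is compatible—via \autoref{fact_rem_last}—with a stalactite decomposition of $\bu$ that actually sits inside $\mathscr{C}$.
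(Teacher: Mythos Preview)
Your overall reduction is right: the content of the proof lies in the disjoint-support case, and the relevant tools are \autoref{lem_puntita}, \autoref{lemma1}, and \autoref{claim}. However, in each of the three sub-arguments you sketch, the pieces are assembled in a way that does not quite close.

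\textbf{Expansion, disjoint case.} You apply \autoref{claim} to $(\bu,\bv)$ at $i\in\supp(\bv)$. When the output is $\bu-\be_j+\be_i\in\mathscr{C}$, you propose to recover $\bu+\be_i$ via \autoref{lem_puntita}. But \autoref{lem_puntita} requires $\mathscr{C}_{\bu+\be_i}\neq\emptyset$, i.e.\ an element of $\mathscr{C}$ that dominates $\bu$ \emph{and} has $i$-th coordinate at least~$1$; since $\supp(\bu)\cap\supp(\bv)=\emptyset$, nothing you have produced witnesses this. The paper's fix is to apply \autoref{claim} with the roles reversed, to $(\bv,\bu)$ at an index $i\in\supp(\bu)$: this yields some $\bv'$ with $i\in\supp(\bu)\cap\supp(\bv')$ and $|\bv'|\ge|\bv|>|\bu|$, so \autoref{fact_sup} gives $\bu+\be_j\in\mathscr{C}$ with $[\bu]_j<[\bv']_j$; a short argument then shows $j\neq i$ and hence $[\bv']_j\le[\bv]_j$.

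\textbf{Exchange, disjoint case.} Your plan is to lift both $\bu,\bv$ to $\mathscr{C}^{\ttop}$, apply symmetric exchange there at index $i$, and pull down via \autoref{fact_rem_last}. Two things break. First, the premise $[\bu^{\ttop}]_i>[\bv^{\ttop}]_i$ need not hold: since the lift indices are merely asserted to exist (\autoref{fact_top}) and one of $\bv$'s lift indices may equal $i$, you can have $[\bu^{\ttop}]_i=[\bv^{\ttop}]_i=1$. Second, even when exchange at the top succeeds, the resulting index $j'$ lies in $\supp(\bv^{\ttop})$, not necessarily in $\supp(\bv)$; if $j'$ is one of $\bv$'s lift indices then $[\bv]_{j'}=0$ and the conclusion $[\bu]_{j'}<[\bv]_{j'}$ fails. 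The paper avoids the top altogether here: it applies \autoref{claim} to $(\bu,\bv)$ at some $i\in\supp(\bv)$ to manufacture an overlap, and then runs a (somewhat delicate) case analysis using \autoref{fact_sup} and, in the small cases, \autoref{lemma1}.

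\textbf{The case $\bv=\mathbf{0}$.} You propose to make $i$ one of the stalactite indices $\ell_s$ for $\bu$. But the stalactite $\St(\bu^{\ttop};\ell_1,\dots,\ell_u)$ from \autoref{fact_top} only contains points $\ge\bu$; it never contains $\bu-\be_i$. The paper instead picks $j\in\supp(\bu)\setminus\{i\}$, uses \autoref{lem_puntita} to get $\be_j\in\mathscr{C}$, and then applies \autoref{fact_sup} to the overlapping pair $(\bu,\be_j)$.
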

\begin{proof}
	The result follows by combining \autoref{rem_only_one}, \autoref{lem_expansion} and \autoref{lem_exchange}.
\end{proof}

In light of \autoref{thm_glue_poly} we end the section with the following question: 

\begin{question}
Is every g-polymatroid a cave?	
\end{question}

	\section{Results on multiplicity-free varieties}
	\label{sect_results_mult_free}

	In this section, we prove several results on multiplicity-free varieties. 
	These results form the core of our applications. 	Throughout this section, we utilize the following setup.

	\begin{definition}\label{def_multideg}	
		Let $X \subset \PP = \PP_\kk^{m_1} \times_\kk \cdots \times_\kk \PP_\kk^{m_p}$ be a variety over an arbitrary field $\kk$.
		We say that $X$ is  \emph{multiplicity-free} if all its multidegrees are at most one, i.e., $\deg_\PP^\bn(X) \in \{0, 1\}$ for all $\bn \in \NN^p$ with $|\bn| = \dim(X)$. 	Let  $\bm = (m_1,\ldots,m_p) \in \ZZ_+^p$  and 
		$$
		S = \kk\left[x_{i,j} \mid 1 \le i \le p, 0 \le j \le m_i \right]
		$$ 
		be the standard $\NN^p$-graded polynomial ring with $\deg(x_{i,j}) = \ee_i \in \NN^p$ for every $i,j$. 
		We note that  
		$$
		\multProj(S) = \PP := \PP_\kk^{m_1} \times_\kk \cdots \times_\kk \PP_\kk^{m_p}.
		$$
	\end{definition}
	
	Our approach on multiplicity-free varieties comes from the general fact that we can degenerate these varieties to a reduced union of certain multiprojective spaces (and these multiprojective spaces are determined by the multidegrees).
	In algebraic terms, this says that the multigraded generic initial ideals are square-free.
	This follows from an important theorem of Brion \cite{BRION_MULT_FREE}.
	
	We now introduce some notation regarding multigraded generic initial ideals. 

	\begin{setup}
		\label{setup_gin}
		Let  $B_{m_i+1}(\kk)$ be the subgroup of $\text{GL}_{m_i+1}(\kk)$  of upper triangular invertible matrices.   
		The \emph{Borel subgroup of} $G := \text{GL}_{m_1+1}(\kk) \times \cdots \times \text{GL}_{m_p+1}(\kk)$ is the one given by 
		$$
		B := B_{m_1+1}(\kk) \times \cdots \times B_{m_p+1}(\kk)\subset G.
		$$  
		We consider the natural action of $G$ on $S$. Let $>$ be a monomial order on $S$ satisfying $x_{i,0} > x_{i,1} > \cdots > x_{i,m_i}$ for all $i \in [p]$. 
	\end{setup}

	We now recall the definition of multigraded generic initial ideals;~see \cite[\S15.9]{EISEN_COMM} for the single graded~case.  
	
	\begin{definition}
		Let $J \subset S$ be an $S$-homogeneous ideal. If $\kk$ is infinite, we define the
		 \emph{multigraded generic initial ideal} $\gin_>(J)$ of $J$ with respect to $>$ as the ideal $\init_>(g(J))$, where $g$ belongs to a Zariski dense open subset $\mathcal{U} \subset G$.
	\end{definition}
	
	An $S$-homogeneous ideal $J \subset S$ is said to be \emph{Borel-fixed} if $g(J) = J$ for all $g \in B$.
	As in the single graded setting, we have that  $\gin_>(J)$ is Borel-fixed (see \cite[Theorem 15.20]{EISEN_COMM}). 	
	
	\begin{remark}\label{rem_borel_primes}
		\label{rem_mult_free_varieties}
		The properties of Borel-fixed ideals (see~\cite[Chapter~15]{EISEN_COMM}) extend to the multigraded setting:
		\begin{enumerate}[\rm (i)]
			\item A Borel-fixed prime ideal in $S$ is of the form
			$
			\pp_\mathbf{a} \,:=\, \left(x_{i,j} \mid 1 \le i \le p \text{ and } 0 \le j < a_i\right) \subset S
			$
			for some $\mathbf{a} = (a_1,\ldots,a_p) \in \NN^p$ (see \cite[Lemma 3.1]{CDNG_MINORS}).
			\item An $S$-homogeneous Borel-fixed ideal $J \subset S$ is monomial  and all of its associated primes are also Borel-fixed (see \cite[Lemma 3.2]{CDNG_MINORS}).
		\end{enumerate}
	\end{remark}

	In the following theorem we gather some important properties of multiplicity-free varieties. 
	
	\begin{theorem}[\cite{BRION_MULT_FREE}, \cite{caminata2022multidegrees}]
		\label{thm_gin_mult_free}
		Assume \autoref{setup_gin}.
		Let $X \subset \PP = \PP_\kk^{m_1} \times_\kk \cdots \times_\kk \PP_\kk^{m_p}$ be a multiplicity-free variety and $\fP \subset S$ be the corresponding $\NN^p$-graded prime ideal.
		Then the following statements hold:
		\begin{enumerate}[\rm (i)]
			\item $S/\fP$ is Cohen-Macaulay.
			\item $S/\fP$ is a normal domain.
			\item  {\rm (}$\kk$ infinite{\rm )} $\gin_>(\fP)$ is a square-free monomial ideal for any monomial order $>$ on $S$. 
			\item {\rm (}$\kk$ infinite{\rm )} The simplicial complex $\Delta$ corresponding to $\gin_>(\fP)$ is shellable.
		\end{enumerate}
	\end{theorem}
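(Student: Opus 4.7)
My proof plan is to address the four parts separately, drawing on Brion's structure theorem for multiplicity-free subvarieties \cite{BRION_MULT_FREE} for the geometric statements and using the multigraded generic-initial-ideal technology of \cite{caminata2022multidegrees} for the combinatorial ones.

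For parts (i) and (ii), I would appeal directly to Brion's degeneration theorem. Brion shows that every multiplicity-free $X \subset \PP$ admits a flat degeneration, compatible with the $G$-action, to a reduced scheme-theoretic union $Y$ of products $\PP_\kk^{a_1} \times_\kk \cdots \times_\kk \PP_\kk^{a_p}$ whose components are indexed exactly by $\msupp_\PP(X)$. The scheme $Y$ is Cohen-Macaulay, and upper semicontinuity of depth along the flat family lifts this property to $S/\fP$, giving (i). Normality (ii) follows either from Brion's representation-theoretic description of $X$ as a suitable $B$-orbit closure (where normality is read off directly) or by lifting Serre's $(S_2)+(R_1)$ criterion from $Y$ using the flatness of the degeneration.

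For part (iii), the key point is that $J := \gin_>(\fP)$ is Borel-fixed, so by \autoref{rem_borel_primes}, $J$ is monomial and all of its associated primes are of the form $\pp_\ba$ for $\ba \in \NN^p$. Since $S/\fP$ and $S/J$ share the same multigraded Hilbert function, their multidegree polynomials agree. The associativity formula for multidegrees then gives
\[
  \mathcal{C}(S/\fP;\zzz) \;=\; \sum_{\ba} \lambda_\ba \, \mathcal{C}(S/\pp_\ba;\zzz),
\]
where $\ba$ ranges over the minimal primes of $J$ and $\lambda_\ba$ denotes the length of the $\pp_\ba$-primary component. The polynomials $\mathcal{C}(S/\pp_\ba;\zzz)$ are pairwise distinct monomials in $\zzz$ (since $\ba$ is recoverable from $\pp_\ba$), and the multiplicity-free hypothesis ensures each coefficient on the left lies in $\{0,1\}$. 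Thus every $\lambda_\ba$ must be $0$ or $1$, each primary component of $J$ is prime, and so $J$ is radical. A radical monomial ideal is square-free.

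For part (iv), I would produce an explicit shelling order on the facets of $\Delta = \Delta(J)$. By (iii) and \autoref{rem_borel_primes}, these facets are in bijection with $\msupp_\PP(X)$, which is a polymatroid by \autoref{thm_pos_multdeg}. The plan is to order the facets according to a linear extension of the lexicographic order on $\msupp_\PP(X)$ and verify the shelling condition using the symmetric exchange property (\autoref{prop_symmetric}) of polymatroids. Concretely, given two facets $F_i, F_j$ corresponding to $\ba_i, \ba_j \in \msupp_\PP(X)$ with $j<i$, a polymatroid exchange step between $\ba_i$ and $\ba_j$ must produce an intermediate point $\ba_\ell \in \msupp_\PP(X)$ with $\ell < i$ whose facet $F_\ell$ contains the relevant codimension-1 face of $F_i$ lying in $F_j$. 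Translating this combinatorial move into the corresponding intersection-of-coordinate-subspaces statement is the main technical obstacle; parts (i)--(iii), while substantial, unfold as essentially formal consequences of Brion's degeneration together with the multigraded Borel structure.
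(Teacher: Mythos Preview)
The paper's own proof of this theorem is simply a citation: parts (i)--(iii) are attributed to \cite[Theorem 6.9]{caminata2022multidegrees} and part (iv) to \cite[Remark 5.7]{caminata2022multidegrees}. So you are doing more than the paper does here, and your sketch for (iv) is in fact precisely the argument the paper itself carries out later as \autoref{lemma_Lex}.

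That said, two of your sketches have genuine gaps. In (iii), showing that each minimal prime $\pp_\ba$ of $J=\gin_>(\fP)$ occurs with multiplicity $\lambda_\ba=1$ does \emph{not} by itself imply that $J$ is radical: it only says the $\pp_\ba$-primary component equals $\pp_\ba$, but says nothing about embedded primes. You need an additional input, typically that $S/J$ is Cohen--Macaulay (hence unmixed), which in turn requires knowing that passing to the multigraded gin preserves depth. This is the step that \cite{caminata2022multidegrees} handles and that you should invoke explicitly.

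In (ii), neither of the two mechanisms you offer works as stated. A general multiplicity-free subvariety $X\subset\PP$ is \emph{not} a $B$-orbit closure, so the first route is unavailable. For the second, the degeneration $Y$ is a union of coordinate linear subspaces meeting along positive-dimensional loci; such a $Y$ fails $(R_1)$ along those intersections, so it is not normal, and one cannot lift normality of $X$ from normality of $Y$. Brion's actual argument (and the treatment in \cite{caminata2022multidegrees}) proceeds differently, via Frobenius splitting/compatibly split techniques or by a direct analysis of the section ring, and you should cite that rather than the $(S_2)+(R_1)$ lift.
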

	\begin{proof}
		For parts (i),(ii) and (iii), see \cite[Theorem 6.9]{caminata2022multidegrees}.
		Part (iv) follows from \cite[Remark 5.7]{caminata2022multidegrees}.
	\end{proof}

	\begin{remark}[{\cite[Remark 5.10]{caminata2022multidegrees}}]\label{rem_corr_prime_mult}
		Let $X \subset \PP = \PP_\kk^{m_1} \times_\kk \cdots \times_\kk \PP_\kk^{m_p}$ be a multiplicity-free variety with $\kk$ infinite and $\fP \subset S$ be the corresponding $\NN^p$-graded prime ideal.
		We have that $\gin_>(\fP)$ is square-free.
		Then there is a bijection between the positive multidegrees of $X$ and the (minimal) associated primes of $\gin_>(\fP)$.
		More precisely, for any $\bn \in \NN^p$ with $|\bn| = \codim(X, \PP)$, we have that $c_\bn(X) > 0$ if and only if $\pp_{\bn} \subset S$ is an associated prime of $\gin_>(\fP)$ (see \autoref{rem_borel_primes}(i)), and by \cite[Theorem 2.8(ii)]{cidruiz2021mixed}, we also should have $\bn \le \bm$.		
		Equivalently, we obtain that $\bn \in \msupp_\PP(X)$ if and only if $\pp_{\bm - \bn}$ is an associated prime of $\gin_>(\fP)$.
	\end{remark}

	The following lemma shows that for multiplicity-free varieties, Hilbert functions and polynomials agree everywhere in $\fv\in\NN^p$. 
	
	\begin{lemma}
		\label{lem_eq_Hilb_pol_funct_mult_free}
		Let $X \subset \PP = \PP_\kk^{m_1} \times_\kk \cdots \times_\kk \PP_\kk^{m_p}$ be a multiplicity-free  variety and $\fP \subset S$ be the corresponding $\NN^p$-graded prime ideal. 
		Then we have the equality 
		$$
		P_X(\fv) \;= \; \dim_\kk\left(\left[S/\fP\right]_\fv\right)
		$$
		for all $\fv \in \NN^ p$.
	\end{lemma}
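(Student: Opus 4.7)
The plan is to reduce the statement to a claim about a square-free monomial ideal via the multigraded generic initial ideal, and then combine inclusion--exclusion identities for the Hilbert polynomial and the Hilbert function.

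First, since both $P_X(\fv)$ and $\dim_\kk[S/\fP]_\fv$ are invariant under extension of the ground field, I may assume $\kk$ is infinite. Let $J := \gin_>(\fP)$ for any monomial order $>$ refining the $\NN^p$-grading, as in \autoref{setup_gin}. Passage to a multigraded initial ideal preserves the multigraded Hilbert function, and hence also the multigraded Hilbert polynomial; it therefore suffices to show $\dim_\kk[S/J]_\fv = P_{S/J}(\fv)$ for every $\fv \in \NN^p$. By \autoref{thm_gin_mult_free}(iii), $J$ is a square-free monomial ideal; write its minimal primary decomposition as $J = \pp_1 \cap \cdots \cap \pp_k$.

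The central consequence of the multiplicity-free hypothesis that I will use is \autoref{rem_corr_prime_mult}: each $\pp_i$ has the form $\pp_{\bm - \fa_i}$ for some $\fa_i \in \msupp_\PP(X)$, so its defining vector is coordinatewise $\le \bm$. Since the monomial primes $\pp_\mathbf{a}$ are generated by initial segments of the variables of each fixed multidegree, their sums behave as coordinatewise maxima: for any nonempty $\bbS \subseteq [k]$, the ideal $\pp_\bbS := \sum_{i \in \bbS} \pp_i$ is again of the form $\pp_{\mathbf{a}}$ with $\mathbf{a} \le \bm$. In particular $\pp_\bbS$ contains at most $m_\ell$ variables of degree $\ee_\ell$, so $n_\ell(\pp_\bbS) \ge 1$ for every $\ell \in [p]$. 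Under this positivity condition, $S/\pp_\bbS$ is a standard $\NN^p$-graded polynomial ring in the remaining variables, and a direct monomial count yields
$$
\dim_\kk[S/\pp_\bbS]_\fv \;=\; \prod_{\ell=1}^{p}\binom{v_\ell + n_\ell(\pp_\bbS) - 1}{n_\ell(\pp_\bbS) - 1} \;=\; P_{S/\pp_\bbS}(\fv)
$$
for every $\fv \in \NN^p$, the second equality being \autoref{rem_mon_prime}.

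To conclude, I combine this with two inclusion--exclusion identities. On the polynomial side, \autoref{lem_general_IE} gives
$$
P_{S/J}(\ttt) \;=\; \sum_{\emptyset \neq \bbS \subseteq [k]} (-1)^{|\bbS|-1} P_{S/\pp_\bbS}(\ttt).
$$
Running the same inductive argument with the short exact sequence
$$
0 \to S/(I \cap I') \to S/I \oplus S/I' \to S/(I+I') \to 0,
$$
but now applied to Hilbert functions (each $\fv$-graded strand of this sequence is itself exact), produces the matching identity $\dim_\kk[S/J]_\fv = \sum_{\emptyset \neq \bbS \subseteq [k]} (-1)^{|\bbS|-1} \dim_\kk[S/\pp_\bbS]_\fv$ for every $\fv \in \NN^p$. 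Comparing the two identities termwise via the previous paragraph yields the desired equality.

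The main subtlety, and where multiplicity-freeness is truly essential, is the step $n_\ell(\pp_\bbS) \ge 1$: without this positivity, $P_{S/\pp_\bbS}$ would vanish identically in some factor $t_\ell$, whereas $\dim_\kk[S/\pp_\bbS]_\fv$ would still equal $1$ on the hyperplane $v_\ell = 0$, obstructing the termwise comparison. \autoref{rem_corr_prime_mult} precisely rules out this pathology.
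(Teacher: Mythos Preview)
Your proof is correct and follows essentially the same approach as the paper: reduce to infinite $\kk$, pass to the square-free multigraded generic initial ideal, use inclusion--exclusion (\autoref{lem_general_IE}) on both the Hilbert polynomial and the Hilbert function, and invoke \autoref{rem_corr_prime_mult} to ensure every sum $\pp_\bbS$ of minimal primes satisfies $n_\ell(\pp_\bbS)\ge 1$ so that its Hilbert function and polynomial agree on all of $\NN^p$. Your write-up is in fact somewhat more explicit than the paper's in spelling out the Hilbert-function inclusion--exclusion and in isolating where multiplicity-freeness is essential.
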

	\begin{proof}
		After possibly extending the field, we can assume $\kk$ is infinite. Choose a monomial order $>$ and the corresponding generic initial ideal $J =\gin_>(\fP) \subset S$.
		We have that $P_{X}(\ttt) = P_{S/J}(\ttt)$ (see, e.g., \cite[Theorem 15.3]{EISEN_COMM}).
		By \autoref{thm_gin_mult_free}, we know that $J$ is a square-free monomial ideal.
		From \autoref{lem_general_IE}, we can compute the Hilbert polynomial of $S/J$ in terms of the Hilbert polynomials of sums 
		of the minimal primes of $J$.
		From \autoref{rem_mult_free_varieties}(i), we know that the minimal primes of $J$ are of the form $\pp_{\ba_1},\ldots, \pp_{\ba_k}$.
		Since the 
		sum of minimal primes
		$
		\pp_{\ba_{i_1}} + \cdots + \pp_{\ba_{i_\ell}} \;=\; \pp_{\max\{\ba_{i_1}, \ldots, \ba_{i_\ell}\}} 
		$
		gives a relevant monomial prime ideal (see \autoref{rem_corr_prime_mult}), it follows that 
		$$
		P_{S/(\pp_{\ba_{i_1}} + \cdots + \pp_{\ba_{i_\ell}})}(\fv) = \dim_\kk\left({\big[S/(\pp_{\ba_{i_1}} + \cdots + \pp_{\ba_{i_\ell}})\big]}_\fv\right) 
		$$
		for all $\fv \in \NN^p$.
		This completes the proof of the lemma.
	\end{proof}

The following lemma shows that for a multiplicity-free variety, the order of the facets of $\Delta\left(\gin_>(\fP)\right)$ induced by the lex order on the multidegrees  is a shelling. We note that, when considering the lex order, we can do so with respect to any ordering of the elements in $[p]$. 

\begin{lemma}\label{lemma_Lex}
	{\rm(}$\kk$ infinite{\rm)} Let $X \subset \PP = \PP_\kk^{m_1} \times_\kk \cdots \times_\kk \PP_\kk^{m_p}$ 	 be a multiplicity-free  variety  and $\fP \subset S$ be the corresponding $\NN^p$-graded prime ideal. 
	Let $J=\gin_>(\fP) \subset S$, and  	
	for each $\bn\in \Msupp_\PP(X)$, let $F_\bn$  be the corresponding facet of $\Delta(J)$, i.e.,
	$$
	F_\bn:=\{x_{1,m_1-n_1},\ldots,x_{1,m_1}\}\cup \cdots\cup \{x_{p,m_p-n_p},\ldots,x_{p,m_p}\}.
	$$  
	Then, the lex order on $\msupp_\PP(X) \subset \NN^p$ induces a shelling on $\Delta(J)$.  
\end{lemma}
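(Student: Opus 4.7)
The plan is to identify the facets of $\Delta(J)$, verify purity, and then produce, for each pair $\bn' \prec \bn$ in lex order, a facet $F_{\bn''}$ with $\bn'' \prec \bn$ such that $F_\bn \cap F_{\bn''}$ is a codimension one face of $F_\bn$ containing $F_\bn \cap F_{\bn'}$. This is the standard reformulation of the shellability condition stated in the paper: it forces $\langle F_{\bn'} : \bn' \prec \bn \rangle \cap \langle F_\bn \rangle$ to be pure of dimension $\dim(F_\bn) - 1$, since every face of $F_\bn$ already in some earlier $F_{\bn'}$ is then swept up by such a codimension one facet $F_\bn \cap F_{\bn''}$.

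First, I would invoke \autoref{rem_corr_prime_mult} to identify the minimal primes of $J$ as the $\pp_{\bm - \bn}$ for $\bn \in \msupp_\PP(X)$, from which each corresponding facet of $\Delta(J)$ is the complementary variable set $F_\bn$ described in the statement. Since all facets have cardinality $|\bn| + p = \dim(X) + p$, the complex $\Delta(J)$ is automatically pure. Now, fixing $\bn' \prec \bn$, I would let $i_0$ be the first index at which they disagree; the definition of lex order forces $[\bn]_{i_0} > [\bn']_{i_0}$ and $[\bn]_k = [\bn']_k$ for $k < i_0$. Since $\msupp_\PP(X)$ is a polymatroid by \autoref{thm_pos_multdeg}, the Symmetric Exchange Property (\autoref{prop_symmetric}) applied to $(\bn, \bn')$ at $i_0$ yields an index $\ell \in [p]$ with $[\bn]_\ell < [\bn']_\ell$ and $\bn'' := \bn - \ee_{i_0} + \ee_\ell \in \msupp_\PP(X)$. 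Because $\bn$ and $\bn'$ coincide in every coordinate strictly before $i_0$ and $\ell \ne i_0$, the exchange index is forced to satisfy $\ell > i_0$, which gives $\bn'' \prec \bn$ in lex order.

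The two facet-level conditions then reduce to a direct coordinate-wise check: the only variable in $F_\bn \setminus F_{\bn''}$ is $x_{i_0,\, m_{i_0} - n_{i_0}}$, and in every chunk $k$ the common truncation level $m_k - \min(n_k, n_k')$ governing $F_\bn \cap F_{\bn'}$ lies weakly above the threshold defining $F_{\bn''}$; the chunk $i_0$ case uses precisely $n_{i_0}' < n_{i_0}$, while chunks outside $\{i_0, \ell\}$ are unchanged. I expect the principal obstacle to be controlling the \emph{direction} of the exchange: a priori the polymatroid axiom only promises some $\ell$ with $[\bn]_\ell < [\bn']_\ell$, but a lex-based shelling requires this $\ell$ to lie strictly above the pivot $i_0$ so that $\bn'' \prec \bn$. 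The device that forces this — and the essential interaction between the lex combinatorics and the exchange property of $\msupp_\PP(X)$ — is to choose $i_0$ as the very first coordinate of disagreement, which makes any admissible exchange direction lie in the tail of $[p]$. Since the paper emphasizes that the lex order may be taken with respect to any ordering of $[p]$, the argument applies verbatim to each such reordering.
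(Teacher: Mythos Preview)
Your argument is correct and follows essentially the same strategy as the paper: identify the facets via \autoref{rem_corr_prime_mult}, use that $\msupp_\PP(X)$ is a polymatroid, and apply the exchange axiom at the first coordinate $i_0$ where $\bn$ and $\bn'$ disagree. The one notable difference is that you perform the exchange on $\bn$ (producing $\bn'' = \bn - \ee_{i_0} + \ee_\ell$), whereas the paper performs it on the earlier element $\bs = \bn'$ (producing $\bs' = \bs + \ee_i - \ee_j$). Your choice is cleaner: because $\ell$ must satisfy $[\bn]_\ell < [\bn']_\ell$ and the coordinates before $i_0$ agree, you immediately get $\ell > i_0$ and hence $\bn'' \prec \bn$ in one step. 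The paper's route instead has to treat the possibility that $\bs' \not\prec \bn$ (which forces $n_i = s_i + 1$) and then branch on whether a second disagreement index $i' > i$ exists, eventually reaching a contradiction with $|F_\bn \cap F_\bs| < |F_\bn| - 1$. Both arguments hinge on the same device you correctly flag as the crux --- taking $i_0$ to be the \emph{first} coordinate of disagreement --- and both apply verbatim to any ordering of $[p]$; your version simply avoids the case analysis. Incidentally, you only use one half of the Symmetric Exchange Property, so the basic exchange axiom from \autoref{definition_polymatroid} would already suffice.
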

\begin{proof}
	The set  $\mathscr{P}:= \Msupp_\PP(X) \subset \NN^p$ is a 
	 polymatroid  by \autoref{thm_pos_multdeg} 
	 (see \autoref{definition_polymatroid}). 
	 Denote by $\prec$ the lexicographical order on $\NN^p$.
	Fix a point $\fn=(n_1,\ldots, n_{p})\in\mathscr{P}$, by induction, it suffices to show that for any $\fs=(s_1,\dots,s_p)\in \mathscr{P}$ with $\fs \prec \fn $ and $|F_{\fn}\cap F_{\fs}|<|F_\fn|-1$ there exists $\fv\in \mathscr{P}$ such that $\fv\prec \fn$ and $F_{\fs}\cap F_{\fn} \subsetneq F_{\fv}\cap F_{\fn} $. 
	We note that, $F_\fs\cap F_\fn=F_{\min\{\fs,\fn\}}$ where $\min\{\fs,\fn\}:=(\min\{s_1,n_1\}, \ldots, \min\{s_p,n_p\})$.
	
	Let $i\in [p]$ be the smallest index such that $s_i\neq n_i$, since $\fs\prec\fn$ we have by definition that $s_i <n_i$. 
	Thus, 
	there exists $j\in [p]$ such that $i<j$, $n_j<s_j$, and $\fs':=\fs+\be_i-\be_j\in \mathscr{P}$.
	Since $\min\{\fs',\fn\}=\min\{\fs,\fn\}+\be_i$, it follows that $F_{\fs}\cap F_{\fn} \subsetneq F_{\fs'}\cap F_{\fn} $. 
	If $\fs'\prec \fn$, the proof is finished, otherwise we must have $n_i=s_i+1$, and we assume this is the case.
	
	If $s_{i'}<n_{i'}$ for some other $i'> i$, then there exists $j'\in [p]$ such that $n_{j'}<s_{j'}$ 
	and $\fu:=\fs+\be_{i'}-\be_{j'}$ satisfies $\fu\prec\fn$ and $F_{\fs}\cap F_{\fn}\subsetneq F_{\fu}\cap F_{\fn}$, 
	and again the proof is finished.
	Finally, if no such $i'$ exists, we must have $n_j+1=s_j$  and $s_k=n_k$ for every   $k\neq i,j$.
	It would follow that $|F_{\fs}\cap F_{\fn}|=|F_{\fn}|-1$, contradicting the hypothesis $|F_{\fs}\cap F_{\fn}|<|F_{\fn}|-1$.
\end{proof}

	In the following theorem we show that if one knows the multidegrees of a multiplicity-free variety, then one can determine the whole Hilbert polynomial. Moreover, we  provide an algorithmic way to do so.
		
		\begin{theorem}[Hilbert polynomials are determined by multidegrees]
		\label{lem_stalactite_mult_free}
		Let $X \subset \PP = \PP_\kk^{m_1} \times_{\kk} \cdots \times_{\kk} \PP_\kk^{m_p}$ be a multiplicity-free variety.
		 Let $d=|\msupp_\PP(X)|$ and 
	 	$\fn_1\prec\fn_2\prec \cdots \prec \fn_{d}
		$
		be the elements of $\msupp_\PP(X)$ sorted with respect to the lex order. Then
			\begin{equation*}
			\hsupp_\PP(X)=\bigcup_{i=1}^{d} \St(\fn_i; \{\fn_1,\ldots, \fn_{i-1}\})\;\;\text{	    (see \autoref{def_stal_neigh}). }
		\end{equation*}
	Moreover, for each $\bn\in \hsupp_\PP(X)$ we have  $$\deg_\PP^\bn(X)=(-1)^{\dim(X)-|\bn|}\big|\left\{i\in [d]\mid \bn\in \St(\fn_i; \{\fn_1,\ldots, \fn_{i-1}\right\})\}\big|.$$
%
	\end{theorem}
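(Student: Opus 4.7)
The plan is to reduce to a square-free shellable setting via the generic initial ideal, and then read off both the support and the coefficients from the cancellation-free expression provided by \autoref{shell_prop}. After possibly extending $\kk$ to an infinite field (which preserves the Hilbert polynomial), I set $J = \gin_>(\fP)$ for some monomial order $>$. By \autoref{thm_gin_mult_free}, $J$ is square-free and $\Delta(J)$ is shellable, so $P_X(\ttt) = P_{S/J}(\ttt)$. By \autoref{rem_corr_prime_mult}, the minimal primes of $J$ are exactly $\{\pp_{\bm - \fn_i} \mid i \in [d]\}$, and by \autoref{lemma_Lex} the lex order $\fn_1 \prec \cdots \prec \fn_d$ induces a shelling on $\Delta(J)$.

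Next, I apply \autoref{shell_prop} along this shelling and identify each set $\mathfrak{P}_j$ with the neighbor structure defining the stalactite. Using that $\pp_\mathbf{a} + \pp_\mathbf{b} = \pp_{\max\{\mathbf{a},\mathbf{b}\}}$ for monomial primes of this form, I would compute
\[
\pp_{\bm - \fn_\ell} + \pp_{\bm - \fn_j} \;=\; \pp_{\bm - \min\{\fn_\ell, \fn_j\}},
\]
whose codimension is $|\bm| - |\min\{\fn_\ell,\fn_j\}|$. Since $|\fn_\ell| = |\fn_j| = \dim(X)$, this codimension equals $c+1 = |\bm| - \dim(X) + 1$ exactly when $\fn_\ell$ and $\fn_j$ agree except at two indices, differing by $\be_t - \be_s$ for some $s \in \supp(\fn_j)$ and $t \in [p]$; i.e., $\fn_\ell$ is a neighbor of $\fn_j$ in direction $(-s,t)$. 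Hence, letting $\mathfrak{I}_j \subseteq \supp(\fn_j)$ denote the set of indices $s$ for which such a neighbor $\fn_\ell$ with $\ell < j$ exists, \autoref{shell_prop} yields
\[
\mathfrak{P}_j \;=\; \left\{\pp_{\bm - \fn_j + \be_s} \mid s \in \mathfrak{I}_j\right\} \;=\; \left\{\pp_{\bm - \fn_j} + (x_{s, m_s - (\fn_j)_s}) \mid s \in \mathfrak{I}_j\right\},
\]
and, by definition of the stalactite, $\St(\fn_j; \{\fn_1,\ldots,\fn_{j-1}\}) = \St(\fn_j; \mathfrak{I}_j)$.

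I would then expand the $j$-th summand of \autoref{hilb_pol_sum} via inclusion-exclusion:
\[
P_{S/\pp_{\bm - \fn_j}}(\ttt) - D_j(\ttt) \;=\; \sum_{\bbS \subseteq \mathfrak{I}_j} (-1)^{|\bbS|}\, P_{S/\pp_{\bm - \fn_j + \sum_{s\in\bbS}\be_s}}(\ttt).
\]
By \autoref{rem_mon_prime}, each term on the right is a single binomial contribution whose unique nonzero Hilbert coefficient sits at the lattice point $\fn_j - \sum_{s\in\bbS}\be_s$, which is precisely a typical element of $\St(\fn_j; \mathfrak{I}_j)$. Summing over $j$ and collecting contributions at a fixed $\bn$, every contributing $\bbS$ has $|\bbS| = |\fn_j| - |\bn| = \dim(X) - |\bn|$, so all contributions to $\deg_\PP^\bn(X)$ carry the common sign $(-1)^{\dim(X)-|\bn|}$. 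This gives the coefficient formula directly, and the union description of $\hsupp_\PP(X)$ follows from the fact that the sign uniformity forbids any cancellation (this is the cancellation-free content of \autoref{cor_positivity_shellable}).

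The one step I expect to be subtle is the bookkeeping in matching $\mathfrak{P}_j$ with $\St(\fn_j; \{\fn_1,\ldots,\fn_{j-1}\})$: I must verify carefully that the set of indices $s$ arising from the codimension condition $\codim(\pp_{\bm-\fn_\ell}+\pp_{\bm-\fn_j}) = c+1$ with $\ell < j$ is exactly the set $\mathfrak{I}_j$ used in the stalactite definition, and that the inclusion-exclusion indexed by $\bbS \subseteq \mathfrak{I}_j$ aligns point-by-point with the enumeration of $\St(\fn_j; \mathfrak{I}_j)$. Beyond this combinatorial identification, the proof is essentially a direct computation from \autoref{shell_prop} and \autoref{rem_mon_prime}.
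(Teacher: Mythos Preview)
Your proposal is correct and follows essentially the same approach as the paper: reduce to the square-free shellable generic initial ideal via \autoref{thm_gin_mult_free} and \autoref{lemma_Lex}, then read off the stalactite description from the cancellation-free expansion of \autoref{shell_prop}. Your explicit computation identifying $\mathfrak{P}_j$ with $\{\pp_{\bm-\fn_j+\be_s}\mid s\in\mathfrak{I}_j\}$ via $\pp_{\bm-\fn_\ell}+\pp_{\bm-\fn_j}=\pp_{\bm-\min\{\fn_\ell,\fn_j\}}$ is exactly what the paper encodes more tersely by invoking the Borel-fixed property to say that ``all the new variables $x_{\ell_{j,1}},\ldots,x_{\ell_{j,b_j}}$ have different degrees''; the two arguments are the same observation in different clothing.
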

	\begin{proof}
		After possibly extending the field, we can assume $\kk$ is infinite. 
		Let $\fP \subset S$ be the corresponding $\NN^p$-graded prime ideal, and consider a generic initial ideal $J = \gin_>(\fP)$ with respect to some monomial order $>$.
		Due to \autoref{thm_gin_mult_free}, the monomial ideal $J$ is square-free and the corresponding simplicial complex $\Delta(J)$ is shellable.
		It remains to provide an improvement of \autoref{cor_path_connected}. 
		
		Let $\pp_{\ba_1},\ldots, \pp_{\ba_d}$ be the minimal primes of $J$ (see \autoref{rem_mult_free_varieties}(i)) where $\pp_{\ba_i}$ corresponds to $\bn_i \in \msupp_\PP(X)$ (see \autoref{rem_corr_prime_mult}).
		By \autoref{lemma_Lex}, the  primes  are sorted increasingly  with respect to a shelling order. 
		If $\bn \in \hsupp_\PP(X) \setminus \msupp_\PP(X)$, then the corresponding term $\binom{t_1+n_1}{n_1}\cdots \binom{t_p+n_p}{n_p}$ is induced by one of the  $D_j(\ttt)$  (see \autoref{shell_prop}). 
		As each $\pp_{\ba_i}$ is Borel-fixed, in the description given in \autoref{shell_prop}, we would obtain that in the set $\mathfrak{P}_j=\{(\pp_j,x_{\ell_{j,1}}),\ldots, (\pp_j,x_{\ell_{j,b_j}})\}$ all the new variables $x_{\ell_{j,1}}, \ldots,x_{\ell_{j,b_j}}$ have different degrees.
		The result of the theorem 
		now follows from the formula in \autoref{hilb_pol_sum}.
	\end{proof}

\begin{example}\label{exam_mult_free_sec}
	Let $I_w\subset S$ be the ideal in our running example \autoref{sec_running_example}. 
	The positive multidegrees of the variety $X=\multProj(S/I_w) \subset \big(\PP_\kk^4\big)^5$  can be identified with the set of points from \autoref{exam_stal_sec}, i.e., 
	$$\big\{134,\,143,\,224,\,233,\,314,\,323,\,413\big\}.$$
Therefore, by \autoref{lem_stalactite_mult_free} we can build the Hilbert polynomial of $X$ from the stalactites in 	 \autoref{exam_stal_sec}. 
We obtain the expression
	\begin{align*}\label{eq:H-example}
	P_X(\ttt)=&	\langle\ttt\rangle^{13444}+\langle\ttt\rangle^{14344}+\langle\ttt\rangle^{22444}+\langle\ttt\rangle^{23344}+\langle\ttt\rangle^{31444}+\langle\ttt\rangle^{32344}+\langle\ttt\rangle^{41344}\\
	&-\langle\ttt\rangle^{12444}-2\langle\ttt\rangle^{13344}-\langle\ttt\rangle^{21444}-2\langle\ttt\rangle^{22344}-2\langle\ttt\rangle^{31344}+\langle\ttt\rangle^{12344}+\langle\ttt\rangle^{21344},
	\end{align*}
where $\langle\ttt\rangle^{n_1n_2n_3n_4n_5}:= \binom{t_1+n_1}{n_1}
\binom{t_2+n_2}{n_2}\binom{t_3+n_3}{n_3}\binom{t_4+n_4}{n_4}\binom{t_5+n_5}{n_5}.$
\end{example}

	We now introduce a generic version of Bertini's theorem that will be enough for our purposes.
	
	\begin{remark}[{\cite[Corollary 1.5.9, Corollary 1.5.10]{JOINS_INTERS}}]
		Let $X \subset \PP = \PP_\kk^{m_1} \times_\kk \cdots \times_\kk \PP_\kk^{m_p}$ be a subvariety.	
		Fix $i \in [p]$.
		Consider the purely transcendental field extension $\LL := \kk(z_0,\ldots,z_{m_i})$.
		Let $S_\LL := S \otimes_\kk \LL$, $\PP_\LL := \PP \otimes_\kk \LL = \multProj(S_\LL)$, and $X_\LL = X \otimes_\kk \LL \subset \PP_\LL$. We have that $X_\LL$ is irreducible and reduced.
We say that $y_i := z_0x_{i,0} + \cdots + z_{m_i}x_{i,m_i}$  is the \emph{generic element} of $\left[S_\LL\right]_{\ee_i}$ and that $H_i = V(y_i) \subset \PP_\LL^{m_i} \subset \PP_\LL$ is the \emph{generic hyperplane} in the $i$-th component.
		If $\dim(X) \ge 1$, then $X_\LL \cap H_i \subset \PP_\LL$ is also irreducible and reduced.
	\end{remark}

	The next proposition includes  further important properties of multiplicity-free varieties.
	
	\begin{proposition}\label{prop_mult_free_statements}
		Let $X \subset \PP = \PP_\kk^{m_1} \times_{\kk} \cdots \times_{\kk} \PP_\kk^{m_p}$ be a multiplicity-free variety.
		Then the following statements hold:
		\begin{enumerate}[\rm (i)]
			\item {\rm[Behavior under a cut with a generic hyperplane].} Let $i \in [p]$ and $H_i \subset \PP_\LL^{m_i} \subset \PP_\LL$ be the generic hyperplane in the $i$-th component.
			The intersection $X_\LL \cap H_i$ is a multiplicity-free variety of dimension $\dim(X)-1$.
			Furthermore, $\deg_{\PP_\LL}^{\bn -\ee_i}(X_\LL \cap H_i) = \deg_\PP^{\bn}(X)$ for all $\bn = (n_1,\ldots,n_p) \in \NN^p$ and $n_i\ge 1$.
			\item {\rm[Behavior under a projection].} Let $\fJ = \{j_1,\ldots,j_k\} \subset [p]$ be a subset, and set $\PP' = \PP_{\kk}^{m_{j_1}} \times_\kk \cdots \times_\kk \PP_{\kk}^{m_{j_k}}$.
			The corresponding projection $\Pi_{\fJ}(X) \subset \PP'$ is a multiplicity-free variety.
			Furthermore, for any $\bn = (n_1,\ldots,n_k) \in \NN^k$ the corresponding Hilbert coefficient is given by
			$$
			\deg_{\PP'}^{\bn}\big(\Pi_{\fJ}(X)\big) \;=\; \sum_{ \bw \in \mathscr{W}} \deg_\PP^{\bw}(X),
			$$			
			where $\mathscr{W} = \lbrace \bw = (w_1,\ldots,w_p) \in \NN^p  \mid w_{j_1} = n_{1}, \ldots, w_{j_k} = n_{k} \rbrace$.
			
			\item {\rm[M\"obius-like behavior of Hilbert coefficients].}
			Let $\bn \in \NN^p$ be such that there exists $\bw \in \msupp_\PP(X)$ with $\bw \ge \bn$.
			The following equality holds
			$$
			\sum_{\bw \ge \bn} \deg_\PP^\bw(X) \;=\; 1.
			$$			
			\item For all $i \in [p]$, we have the following equality
			$$
			\min\big\{n_i \mid \bn = (n_1,\ldots,n_p) \in \hsupp_\PP(X)\big\}  \;=\; \min\big\{n_i \mid \bn = (n_1,\ldots,n_p) \in \msupp_\PP(X)\big\}. 
			$$
			\item Let $\fJ = \{j_1,\ldots,j_k\} \subset [p]$ be a subset such that $\dim\left(\Pi_{[p] \setminus \fJ}(X)\right)=0$.
			Consider the corresponding projection $X' = \Pi_\fJ(X) \subset \PP_\kk^{m_{j_1}}\times_{\kk} \cdots \times_\kk \PP_\kk^{m_{j_k}}$.
			Then we have that 
			$$
			X \cong X' \quad \text{ and } \quad P_X(\ttt) =	P_{X'}(t_{j_1},\ldots,t_{j_k}).	
			$$
		\end{enumerate}
	\end{proposition}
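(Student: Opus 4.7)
The plan is to handle (i) and (iii) together via generic hyperplane cuts, to reduce (ii) to an algebraic comparison of Hilbert functions once multi-freeness of the image is in hand, and to derive (iv) and (v) from the stalactite description of \autoref{lem_stalactite_mult_free} and a geometric collapse argument respectively.

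For (i), I would invoke the form of Bertini recorded just before the proposition to conclude that $X_\LL\cap H_i$ is irreducible and reduced of dimension $\dim(X)-1$. The multidegree shift $\deg^{\bn-\ee_i}(X_\LL\cap H_i)=\deg^{\bn}(X)$ (for $n_i\ge 1$) is the classical fact that the Chow class of a generic hyperplane section in the $i$-th factor is obtained by multiplying $[X]$ by $t_i$, which on multidegree polynomials sends $\zzz^{\bn}$ to $\zzz^{\bn-\ee_i}$; multi-freeness is then inherited since each top multidegree of $X_\LL\cap H_i$ is a value in $\{0,1\}$ coming from $X$. For (iii), iterate (i) exactly $|\bn|$ times, cutting $X_\LL$ by $n_i$ generic hyperplanes in each $i$-th factor; the hypothesis $\bw\ge\bn$ for some $\bw\in\msupp_\PP(X)$ together with \autoref{thm_pos_multdeg} guarantees nonemptiness throughout, yielding a multi-free variety $Y$ with $\deg^{\bm}(Y)=\deg^{\bm+\bn}(X)$ for all $\bm$. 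Applying \autoref{lem_eq_Hilb_pol_funct_mult_free} at the origin gives $\sum_{\bm}\deg^{\bm}(Y)=P_Y(\mathbf{0})=h_Y(\mathbf{0})=1$, which after reindexing is the identity $\sum_{\bw\ge\bn}\deg^{\bw}(X)=1$.

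For (ii), the multi-freeness of $\Pi_\fJ(X)$ is Brion's stability theorem for multi-free varieties under projection \cite{BRION_MULT_FREE}. Granting this, set $S_\fJ=\kk[x_{i,j}\mid i\in\fJ]$ and, for $\bn\in\NN^k$, let $\bn'\in\NN^p$ denote its extension by zero outside $\fJ$; any polynomial of multidegree $\bn'$ has zero exponents in the variables outside $S_\fJ$ and therefore lies in $S_\fJ$, producing a canonical identification $[S/\fP]_{\bn'}\cong[S_\fJ/(\fP\cap S_\fJ)]_{\bn}$ and hence $h_X(\bn')=h_{\Pi_\fJ(X)}(\bn)$. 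Applying \autoref{lem_eq_Hilb_pol_funct_mult_free} to both $X$ and $\Pi_\fJ(X)$ upgrades this to the polynomial identity $P_X(\bn')=P_{\Pi_\fJ(X)}(\bn)$; expanding the left side in the Hilbert-coefficient basis, every factor $\binom{0+w_i}{w_i}$ with $i\notin\fJ$ equals $1$, so matching coefficients of $\prod_\ell\binom{n_\ell+w_{j_\ell}}{w_{j_\ell}}$ isolates the claimed sum formula.

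For (iv) the inequality $\min_{\hsupp}\le\min_{\msupp}$ is trivial, and for the reverse set $c=\min\{n_i\mid\bn\in\msupp_\PP(X)\}$: any $\bw\in\hsupp_\PP(X)$ lies in a stalactite $\St(\bn;\ell_1,\ldots,\ell_m)$ by \autoref{lem_stalactite_mult_free}, and if $n_i>c$ then $w_i\ge n_i-1\ge c$, whereas if $n_i=c$ then $\ee_i$ cannot be subtracted in the stalactite, because such a subtraction would force a neighbor $\bn-\ee_i+\ee_j\in\msupp_\PP(X)$ with $i$-th coordinate $c-1$, violating the minimality of $c$. For (v), a $0$-dimensional multi-free irreducible variety is a single $\kk$-rational point $q$ (since $h(\mathbf{0})=1$ forces the residue field to be $\kk$), so $X$ sits inside the fiber $\Pi_{[p]\setminus\fJ}^{-1}(q)\cong\PP'$ and the restriction $\Pi_\fJ|_X:X\to X'$ is a bijective birational morphism between normal varieties (\autoref{thm_gin_mult_free}(ii)), hence an isomorphism; under it $\OO_X(\fv)$ pulls back from $\OO_{X'}(v_{j_1},\ldots,v_{j_k})$ because the twists in the collapsed factors are trivial, so $h_X(\fv)=h_{X'}(v_{j_1},\ldots,v_{j_k})$ for every $\fv\in\NN^p$, and \autoref{lem_eq_Hilb_pol_funct_mult_free} converts this into the stated identity of Hilbert polynomials. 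The main obstacle in this plan is the multi-freeness input needed in (ii): every other item is either elementary bookkeeping or a direct invocation of \autoref{lem_eq_Hilb_pol_funct_mult_free}/\autoref{lem_stalactite_mult_free}, whereas the persistence of multi-freeness under projection is not a formal consequence of the preceding sections and requires genuine geometric input from Brion's theory.
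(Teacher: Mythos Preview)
Your proposal is correct and tracks the paper's proof closely. Parts (i), (ii), (iv), and (v) follow the same strategy as the paper (with (iv) spelled out in more detail than the paper's one-line ``follows from \autoref{lem_stalactite_mult_free}''). Two small remarks: in (ii) the preservation of multiplicity-freeness under projection is cited in the paper from \cite[Theorem~C]{caminata2022multidegrees} rather than directly from \cite{BRION_MULT_FREE}; and in (v) the claim ``$h(\mathbf{0})=1$ forces the residue field to be $\kk$'' is not quite the right justification (the degree-zero piece of a standard graded $\kk$-algebra is always $\kk$)---the correct argument, which the paper uses, is that a zero-dimensional multiplicity-free variety has degree one and is therefore $\Spec(\kk)$. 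Your invocation of Zariski's main theorem in (v) is unnecessary, since once $X$ lies in a fiber isomorphic to $\PP'$ the projection is already an isomorphism onto its image.

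The one genuine difference is in (iii): after cutting down to $Y$, you evaluate $P_Y(\mathbf{0})=h_Y(\mathbf{0})=1$ directly via \autoref{lem_eq_Hilb_pol_funct_mult_free}, whereas the paper's primary argument first projects $Y$ to a single factor $\PP_\kk^{m_p}$, observes the image is a linear $\PP_\kk^l$, and then invokes part (ii). Your route is shorter and avoids the detour through (ii); it is essentially the paper's ``alternative'' argument (via \autoref{thm_gin_mult_free}(iv) and \autoref{cor_Mobius_shellable_ideals}) but bypasses even the shellability input, since \autoref{lem_eq_Hilb_pol_funct_mult_free} alone suffices once $Y$ is known to be multiplicity-free.
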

	\begin{proof}
		Let $\fP \subset S$ be the  $\NN^p$-graded prime ideal corresponding to $X$.
		
		(i) This is a basic result; see \cite[Lemma 3.10]{POSITIVITY} and \cite{cidruiz2021mixed}.
		
		(ii) Let $R = S/\fP$ and $Y = \Pi_\fJ(X) \subset \PP'$.
		The fact that $Y$ is multiplicity-free follows from \cite[Theorem~C]{caminata2022multidegrees}.
		The multihomogeneous coordinate ring of $Y$ is given by the standard $\NN^k$-graded algebra
		$$
		R_{(\fJ)} = \bigoplus_{\substack{i_1\ge 0,\ldots, i_p\ge 0\\ i_{j} = 0 \text{ if } j \not\in \fJ}} {\left[R\right]}_{(i_1,\ldots,i_p)};
		$$
		in other words, $Y = \multProj\left(R_{(\fJ)}\right)$.
		Due to \autoref{lem_eq_Hilb_pol_funct_mult_free}, we obtain the equalities
		\begin{align*}
			P_Y(\nu_{j_1},\ldots,\nu_{j_k}) &\;=\; \dim_\kk\left(\left[R_{(\fJ)}\right]_{(\nu_{j_1},\ldots,\nu_{j_k})}\right)
			\;=\; \dim_\kk\left(\left[R\right]_{\fv}\right) 
			\;=\; P_X(\fv)
		\end{align*}
		for all $\fv = (\nu_1,\ldots,\nu_p) \in \NN^p$ such that $\nu_j = 0$ when $j \notin \fJ$.
		Therefore,  $P_Y(t_{j_1},\ldots,t_{j_k})$ is obtained from the polynomial $P_X(t_1,\ldots,t_p)$ by substituting $t_j= 0$ for $j \notin \fJ$.
		The claimed formula for the multidegrees of $Y$ follows by comparing coefficients.
		
		(iii) 
		Let $\bn \in (n_1,\ldots,n_p) \in \NN^p$ and suppose there is some $\bw \in \msupp(X)$ such that $\bw \ge \bn$.
		For each $i \in [p]$, choose a sequence of generic hyperplanes $H_{i,1},\ldots,H_{i,n_i} \subset \PP_\LL^{m_i} \subset \PP_\LL$.
		Set $Z = X_\LL \cap \left(\bigcap_{i,j} H_{i,j}\right)$.
		By utilizing part (i), we obtain that 
		$$
		\deg_\PP^\bw(X) \;=\; \deg_{\PP_\LL}^{\bw-\bn}(Z)
		$$
		for all $\bw \ge \bn$.
		Moreover, we know that $Y$ is not empty due to the existence of a positive multidegree $\bw \in \msupp_\PP(X)$ such that $\bw \ge \bn$.
		
		Therefore, we may translate the point $\bn$ to the origin $\mathbf{0} \in \NN^p$, and we do so.
		Let $Y = \Pi_p(X)$ be the projection to the last component. 
		As $Y$ is a projective variety of degree $1$, it must follow that $Y \cong \PP_{\kk}^l \subset \PP_{\kk}^{m_p}$ for some $l \ge 0$.
		By part (ii), we obtain the equation
		$$
		\sum_{n_1,\ldots,n_{p-1} \ge 0} \deg_\PP^{(n_1,\ldots,n_{p-1}, n_p)}(X)  \;= \; \deg_{\PP_\kk^{m_p}}^{n_p}(Y)  \;=\; \begin{cases}
			1 & \text{ if $n_p = l$}\\
			0 & \text{ otherwise.}
		\end{cases} 
		$$
		Summing up over $n_p \ge 0$ yields the M\"obius-like behavior of Hilbert coefficients.
		Alternatively, we may conclude the proof by utilizing \autoref{thm_gin_mult_free}(iv) and \autoref{cor_Mobius_shellable_ideals}.

		(iv) This follows from \autoref{lem_stalactite_mult_free}.
		
		(v) 
		Let $\{\ell_1, \ldots, \ell_{p-k}\} = [p] \setminus \fJ$.
		Notice that we have a closed immersion 
		$$
		X \hookrightarrow \Pi_{\fJ}(X) \times_{\kk} \left(\Pi_{\ell_1}(X)\times_\kk \cdots \times_\kk \Pi_{\ell_{p-k}}(X)\right).
		$$
		Since each $\Pi_{\ell_i}(X)$ is zero-dimensional and multiplicity-free, it must be of length $1$ over $\kk$, and so we obtain $\Pi_{\ell_i}(X) \cong \Spec(\kk)$.
		This shows already that $X \cong X'$. 
		Therefore, identifying $\Pi_{\ell_i}(X) \cong \PP_{\kk}^0$, we obtain the equality $P_X(\ttt) =	P_{X'}(t_{j_1},\ldots,t_{j_k})$.
	\end{proof}

	We are now ready for the main result of this section. 
	
	\begin{theorem}
		\label{thm_hsupp_polymatroid}
		Let $X \subset \PP=\PP_\kk^{m_1} \times_{\kk} \cdots \times_{\kk} \PP_\kk^{m_p}$ be a multiplicity-free  variety. 
		Then $\hsupp_\PP(X)$ is a g-polymatroid.
	\end{theorem}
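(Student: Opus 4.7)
The plan is to prove that $\hsupp_\PP(X)$ is a cave in the sense of \autoref{def_caves} and then invoke the gluing theorem \autoref{thm_glue_poly} to conclude it is a g-polymatroid. I would proceed by induction on $\dim(X)$: the base case $\dim(X)=0$ is immediate, since $X$ is then a reduced $\kk$-point, $\hsupp_\PP(X)=\{\mathbf{0}\}$, and this singleton is trivially a g-polymatroid satisfying all cave conditions.

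For the inductive step, the pivotal observation is that truncations of $\hsupp_\PP(X)$ are, up to translation, Hilbert supports of multiplicity-free varieties of strictly smaller dimension. For any $\bb \in \NN^p$ with $|\bb|\le\dim(X)$ and $\bb\le\bm$, iterating \autoref{prop_mult_free_statements}(i) by cutting $X_\LL$ (over a sufficiently large extension $\LL/\kk$) with $b_i$ generic hyperplanes in the $i$-th component for each $i\in[p]$ yields a multiplicity-free variety $Y\subset\PP_\LL$ of dimension $\dim(X)-|\bb|$, and the shift of Hilbert coefficients $\deg_{\PP_\LL}^{\bn-\bb}(Y)=\deg_\PP^\bn(X)$ for all $\bn\ge\bb$ gives the identification
$$
\hsupp_\PP(X)_\bb \;=\; \hsupp_{\PP_\LL}(Y)+\bb.
$$
Truncations with $|\bb|>\dim(X)$ or $\bb\not\le\bm$ are empty, hence trivially g-polymatroids satisfying the cave conditions vacuously.

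With this reduction in hand, the three conditions of \autoref{def_caves} for $\mathscr{A}=\hsupp_\PP(X)_\bb$ are verified in order. Condition (a) follows because $\mathscr{A}^{\ttop}=\msupp_{\PP_\LL}(Y)+\bb$ is a translate of a polymatroid by \autoref{thm_pos_multdeg}. Condition (b) follows by applying \autoref{lem_stalactite_mult_free} to $Y$ and translating by $\bb$; the requirement that this hold for every ordering of $[p]$ is ensured by the note in \autoref{lemma_Lex}, and hence in \autoref{lem_stalactite_mult_free}, that the lex order can be taken with respect to any ordering of $[p]$. Condition (c) is where the induction hypothesis enters: for $\bb\neq\mathbf{0}$, $Y$ has strictly smaller dimension, so $\hsupp_{\PP_\LL}(Y)$ is a g-polymatroid by the induction hypothesis, and translation invariance of the Exchange and Expansion properties transfers this to $\mathscr{A}$. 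Once all three cave conditions hold, \autoref{thm_glue_poly} delivers the theorem.

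The main conceptual obstacle is to set up the induction correctly, recognizing that the geometric operation of cutting by generic hyperplanes corresponds precisely to the combinatorial operation of truncation in \autoref{def_caves}, and that the iterated generic hyperplane section remains irreducible, reduced, multiplicity-free, and of the expected dimension. All of these are delivered directly by repeated application of \autoref{prop_mult_free_statements}(i), so once the correspondence is cleanly articulated, the combinatorial results of \autoref{sect_combinatorial} and the Hilbert-polynomial machinery of \autoref{sect_results_mult_free} do the rest.
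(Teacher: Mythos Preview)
Your proposal is correct and follows essentially the same approach as the paper: show $\hsupp_\PP(X)$ is a cave by identifying truncations with shifted Hilbert supports of generic hyperplane sections via \autoref{prop_mult_free_statements}(i), verify conditions (a), (b), (c) using \autoref{thm_pos_multdeg}, \autoref{lem_stalactite_mult_free}, and induction on $\dim(X)$ respectively, then apply \autoref{thm_glue_poly}. Your write-up is slightly more explicit about the field extension and the vacuous truncation cases, but the argument is the same.
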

	\begin{proof} 
		Set $\mathscr{C}:=\hsupp_\PP(X)$. We show that $\C$ is a cave (see \autoref{def_caves}) and so it is a g-polymatroid by \autoref{thm_glue_poly}. Let $\bb\in \NN^p$ and consider $\mathscr{A}:=\C_\bb$ the $\bb$-truncation of $\mathscr{C}$ (see \autoref{notation_sec_cave}). 
		By \autoref{prop_mult_free_statements}(i), we have that $\mathscr{A}=\hsupp_\PP(X') + \bb$ for some multiplicity-free variety $X'$ of dimension $\dim(X)-|\bb|$. 
		From \autoref{thm_pos_multdeg}, it follows that $\mathscr{A}^{\ttop}=\msupp_\PP(X') + \bb$ is a polymatroid. 
		 By  \autoref{lem_stalactite_mult_free}, condition (b) in \autoref{def_caves} is satisfied for $\mathscr{A}$. 
		 Since Hilbert supports of zero-dimensional varieties are singletons (hence polymatroids), we can assume by induction on $\dim(X)$ that $\mathscr{A}$ is a g-polymatroid if $\bb\neq \mathbf{0}$, finishing the proof.  
	\end{proof}

\section{Proofs of main results}
	\label{sect_proof_main}

	This section contains the main results of the paper. 
	In particular, we provide the proofs of \autoref{thmA}, \autoref{thmB} and \autoref{thmC}.
	The following setup is used throughout this section.

\begin{setup}
	\label{setup_main_results}
	Let $S=\kk[x_{1,0},\ldots,x_{1,m_1}]\otimes_\kk \cdots \otimes_\kk \kk[x_{p,0},\ldots,x_{p,m_p}]$ be a standard $\NN^p$-graded polynomial ring.
	We see $S$ as the multihomogeneous coordinate ring of $\PP:=\PP_\kk^{m_1} \times_\kk \cdots \times_\kk \PP_\kk^{m_p} = \multProj(S)$.
	Let $\nn \subset S$ be the multigraded irrelevant ideal $\nn = \left(x_{1,0},\ldots,x_{1,m_1}\right) \cap \cdots \cap \left(x_{p,0},\ldots,x_{p,m_p}\right)$ of $S$.
\end{setup}

\subsection{Results on multiplicity-free varieties}

Before proving our main results we need some technical lemmas. 
The first one makes a translation from the twisted $K$-polynomial to a suitable Hilbert polynomial.

\begin{lemma}
	\label{lem_K_poly_reduce_Hilb}
	Let $M$ be a finitely generated $\ZZ^p$-graded $S$-module, and consider the twisted $K$-polynomial $\K(M;\zzz) = \sum_\bn c_\bn(M) \,\zzz^\bn$ of $M$.
	Then, there exists a standard $\NN^p$-graded  polynomial extension $S'$ of $S$, $S' = \kk[x_{1,0},\ldots,x_{1,m_1'}]\otimes_\kk \cdots \otimes_\kk \kk[x_{p,0},\ldots,x_{p,m_p'}]$, such that for the $\ZZ^p$-graded $S'$-module $M':=M\otimes_S S'$ we have 
	$$
	P_{M'}(\ttt)= \sum_{\bn \in \NN^p } c_{\bn}(M) \binom{t_1+m_1'-n_1}{m_1'-n_1} \cdots\binom{t_p+m_p'-n_p}{m_p'-n_p}.
	$$
\end{lemma}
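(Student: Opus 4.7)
The plan is to leverage the fact that the twisted $K$-polynomial is invariant under polynomial extensions and then read off the Hilbert polynomial by expanding a suitable rational function.

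Concretely, I would begin by choosing, for each $i \in [p]$, an integer
$$
m_i' \;\geq\; \max\bigl(\{m_i\} \cup \{ n_i \mid \bn \in \supp(\K(M;\zzz))\}\bigr),
$$
and then defining $S'$ by adjoining to $S$ new indeterminates $x_{i,m_i+1},\ldots,x_{i,m_i'}$ of degree $\ee_i$ for each $i$. Since $S' = S \otimes_\kk \kk[x_{i,j} : m_i < j \leq m_i']$ is a free (hence flat) extension, any finite free $\ZZ^p$-graded $S$-resolution $F_\bullet \to M$ base-changes to a finite free $\ZZ^p$-graded $S'$-resolution $F_\bullet \otimes_S S' \to M'$ whose Betti summands carry the same multigraded shifts. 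Hence $\K(M';\ttt) = \K(M;\ttt)$, so also $\K(M';\zzz) = \K(M;\zzz)$ and $c_\bn(M') = c_\bn(M)$ for every $\bn$.

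Next, I would compute the Hilbert series of $M'$ in two ways. From the resolution one obtains
$$
\Hilb_{M'}(\ttt) \;=\; \frac{\K(M;\ttt)}{\prod_{i=1}^p (1-t_i)^{m_i'+1}}.
$$
Substituting $\K(M;\ttt) = \K(M;\zzz)\big|_{z_i = 1 - t_i} = \sum_{\bn} c_\bn(M)\,(1-t_1)^{n_1}\cdots(1-t_p)^{n_p}$ and cancelling, this becomes
$$
\Hilb_{M'}(\ttt) \;=\; \sum_{\bn \in \NN^p} c_\bn(M) \prod_{i=1}^p \frac{1}{(1-t_i)^{m_i'+1-n_i}}.
$$
Here the exponents $m_i'+1-n_i$ are all $\geq 1$ thanks to the choice of $m_i'$, so every summand admits a genuine power-series expansion using $(1-t_i)^{-r} = \sum_{v\ge 0}\binom{v+r-1}{r-1} t_i^v$.

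Extracting the coefficient of $t_1^{v_1}\cdots t_p^{v_p}$ on both sides yields, for every $\fv \in \NN^p$,
$$
h_{M'}(\fv) \;=\; \sum_{\bn \in \NN^p} c_\bn(M) \prod_{i=1}^p \binom{v_i + m_i' - n_i}{m_i' - n_i}.
$$
The right-hand side is a polynomial expression in $\fv$ that agrees with $h_{M'}(\fv)$ on all of $\NN^p$, and in particular for $\fv \gg \mathbf{0}$; by the defining property of the multigraded Hilbert polynomial, it therefore equals $P_{M'}(\ttt)$, giving the claimed formula. The only non-mechanical step is the invariance $\K(M';\ttt) = \K(M;\ttt)$ under the extension, but this is a direct consequence of flatness of $S \to S'$ together with preservation of multigraded Betti shifts; the rest is a routine rational-function manipulation.
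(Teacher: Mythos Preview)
Your argument is correct and follows essentially the same route as the paper: both establish that the $K$-polynomial is unchanged under the polynomial extension (since the multigraded Betti numbers are preserved), then write $\Hilb_{M'}(\ttt)=\K(M;\ttt)/\prod_i(1-t_i)^{m_i'+1}$, cancel the factors $(1-t_i)^{n_i}$ coming from the twisted $K$-polynomial, expand each $(1-t_i)^{-(m_i'+1-n_i)}$ as a power series, and read off the Hilbert polynomial. Your version is slightly more explicit about the flatness justification and the requirement $m_i'\ge m_i$, but there is no substantive difference.
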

\begin{proof}
	The $K$-polynomials of $M$ and $M'$ coincide because their multigraded Betti numbers are the same.	
	We have the following equation
	$$
	\Hilb_{M'}(\ttt) \;=\; \frac{\K(M; \ttt)}{(1-t_1)^{m_1'+1}\cdots (1-t_p)^{m_p'+1}}.
	$$
	Notice that $\K(M;\ttt) = \sum_{\bn =(n_1,\ldots, n_p )} c_{\bn}(M)(1-t_1)^{n_1}\cdots (1-t_p)^{n_p}$, and thus we obtain
	$$
	\Hilb_{M'}(\ttt) \;=\; \sum_{\bn \in \NN^p} \frac{c_\bn(M)}{(1-t_1)^{m_1'+1-n_1}\cdots(1-t_p)^{m_p'+1-n_p}}.
	$$
	The number of $c_\bn(M)$ that are nonzero is finite, and so we may choose $m_i' \ge n_i$ for all $1 \le i \le p$ and $\bn = (n_1,\ldots,n_p) \in \NN^p$ such that $c_\bn(M) \neq 0$.
	Hence we may expand the above Hilbert series and obtain the identity below
	$$
	\Hilb_{M'}(\ttt) \;=\; \sum_{\bn \in \NN^p} c_\bn(M) \left(\sum_{v_1,\ldots,v_p \ge 0} \binom{v_1+m_1'-n_1}{m_1'-n_1} \cdots \binom{v_p+m_p-n_p}{m_p'-n_p} t_1^{v_1}\cdots t_p^{v_p}\right).
	$$
	The claimed formula for $P_{M'}(\ttt)$ follows from this.
\end{proof}

The next lemma shows that, in the case of multiplicity-free varieties, the above translation from a $K$-polynomial to a Hilbert polynomial always holds. 

\begin{lemma}
	\label{lem_Hilb_poly_mult_free}
	Let $X \subset \PP=\PP_\kk^{m_1} \times_{\kk} \cdots \times_{\kk} \PP_\kk^{m_p}$ be a multiplicity-free variety.
	Consider the twisted $K$-polynomial $\K(X;\zzz) = \sum_\bn c_\bn(X) \,\zzz^\bn$ of $X$.
	Then, for any $\bn = (n_1,\ldots,n_p) \in \NN^p$, we have that $c_\bn(X) = 0$ whenever $n_i \ge  m_i+1$ for some $i \in [p]$.
	As a consequence, we obtain the equality  
	$$
	P_{X}(\ttt) \;=\; \sum_{\bn \in \NN^p } c_{\bn}(X) \binom{t_1+m_1-n_1}{m_1-n_1} \cdots\binom{t_p+m_p-n_p}{m_p-n_p}.
	$$
\end{lemma}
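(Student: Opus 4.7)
The plan is to reduce, after base change to an infinite field, to a computation inside a multigraded generic initial ideal, where the minimal primes are forced to have exponents bounded by $\bm$. The Hilbert-polynomial formula is then an immediate corollary via the power-series expansion inside the proof of \autoref{lem_K_poly_reduce_Hilb}.

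First I would reduce to the case $\kk$ infinite (both $\K(X;\zzz)$ and $P_X(\ttt)$ are invariant under base field extension). Let $\fP \subset S$ be the prime of $X$, pick a monomial order $>$, and set $J := \gin_>(\fP)$. By \autoref{thm_gin_mult_free}(iii), $J$ is a square-free monomial ideal. Since the group $G$ of \autoref{setup_gin} acts on $S$ multigradedly and the initial-ideal operation preserves multigraded Hilbert functions, the multigraded Hilbert series is preserved by passing to $\gin$; combined with $\K(M;\ttt)=\Hilb_M(\ttt)\prod_i(1-t_i)^{m_i+1}$, this gives $\K(X;\zzz)=\K(S/J;\zzz)$.

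The key step is to expand $\K(S/J;\zzz)$ via inclusion-exclusion on the minimal primes $\pp_{\ba_1},\ldots,\pp_{\ba_k}$ of $J$. By \autoref{rem_borel_primes} each such prime is Borel-fixed, so of the form $\pp_\ba$ with $\ba\in\NN^p$, and relevance forces $\ba_i\le\bm$ componentwise. The short exact sequences that drive the inductive proof of \autoref{lem_general_IE} are additive on $K$-polynomials, producing
$$
\K(S/J;\zzz)\;=\;\sum_{\emptyset\neq\bbS\subseteq[k]}(-1)^{|\bbS|-1}\K(S/\pp_\bbS;\zzz),
$$
where, as noted in the proof of \autoref{lem_eq_Hilb_pol_funct_mult_free}, $\pp_\bbS=\pp_{\ba_\bbS}$ with $\ba_\bbS:=\max_{i\in\bbS}\ba_i$, hence $\ba_\bbS\le\bm$. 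A direct Koszul computation on the regular sequence $\{x_{i,j} : j<a_i\}$ (of $a_1+\cdots+a_p$ linear forms of the respective degrees $\ee_i$) yields $\K(S/\pp_\ba;\zzz)=\zzz^\ba$ for any $\ba\le\bm$. Thus $\K(X;\zzz)$ is a signed sum of monomials $\zzz^{\ba_\bbS}$ with $\ba_\bbS\le\bm$, which is precisely the vanishing assertion $c_\bn(X)=0$ whenever some $n_i\ge m_i+1$.

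Once this vanishing is in hand the second assertion is automatic: every exponent $m_i+1-n_i$ in
$$
\Hilb_X(\ttt)\;=\;\sum_{\bn}\frac{c_\bn(X)}{(1-t_1)^{m_1+1-n_1}\cdots(1-t_p)^{m_p+1-n_p}}
$$
is strictly positive, which is exactly the regime in which the proof of \autoref{lem_K_poly_reduce_Hilb} applies verbatim with $m_i'=m_i$, producing the stated formula for $P_X(\ttt)$. The only non-routine points are the two principles invoked along the way—that $\gin$ preserves the multigraded $K$-polynomial, and that the inclusion-exclusion of \autoref{lem_general_IE} lifts from Hilbert polynomials to $K$-polynomials—but both are formal consequences of the multigradedness of $G$ and the additivity of $\K$ on short exact sequences, respectively, so the main obstacle is really just bookkeeping rather than a genuine mathematical difficulty.
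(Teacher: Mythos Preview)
Your proof is correct and follows essentially the same approach as the paper's own argument: reduce to an infinite field, pass to the square-free multigraded generic initial ideal, run the inclusion--exclusion of \autoref{lem_general_IE} at the level of $K$-polynomials, observe that sums of the Borel-fixed minimal primes $\pp_{\ba_i}$ are again of the form $\pp_\ba$ with $\ba\le\bm$, and read off the Hilbert-polynomial formula from the proof of \autoref{lem_K_poly_reduce_Hilb}. The only minor point is that your justification ``relevance forces $\ba_i\le\bm$'' is exactly what the paper obtains by invoking \autoref{rem_corr_prime_mult} (ultimately an external reference), so you may wish to cite that remark rather than leave the word ``relevance'' unexplained.
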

\begin{proof}
	After possibly extending the field, we can assume $\kk$ is infinite. 
	We proceed as in \autoref{lem_eq_Hilb_pol_funct_mult_free}.
	Let $\fP \subset S$ be the $\NN^p$-graded prime ideal corresponding to $X$.
	We consider a generic initial ideal $J :=\gin_>(\fP) \subset S$, then  $J$ is square-free due to \autoref{thm_gin_mult_free}. 
	Similarly to \autoref{lem_general_IE}, we can compute the $K$-polynomial of $S/J$ in terms of the $K$-polynomials of sums of the minimal primes of $J$.
	Notice that $\K(S/\pp_\ba;\zzz) = z_1^{a_1}\cdots z_p^{a_p}$ for any $\ba = (a_1,\ldots,a_p) \in \NN^P$ and that $\pp_{\ba_{1}} + \cdots + \pp_{\ba_{k}} = \pp_{\max\{\ba_{1}, \ldots, \ba_{k}\}}$ for any $\ba_{1}, \ldots, \ba_{k} \in \NN^p$ (see \autoref{rem_mult_free_varieties} for notation).
	Therefore the first statement follows from the fact that the minimal primes of $J$ are of the form $\pp_\ba$ with $\ba = (a_1,\ldots,a_p) \in \NN^p$ and $a_i \le m_i$ (see \autoref{rem_corr_prime_mult}).
	Having the first statement we obtain the second one by following the  proof of \autoref{lem_K_poly_reduce_Hilb}.
\end{proof}

\begin{corollary}\label{cor_transl_hilb_K}
Under the assumptions and notations of \autoref{lem_Hilb_poly_mult_free}, we have that $\supp(\K(X;\zzz))$ and  $\hsupp_\PP(X)$ are related by the equality
\begin{equation*}
	\supp(\K(X;\zzz)) = (m_1,\ldots, m_p)-\hsupp_\PP(X).
\end{equation*} 
\end{corollary}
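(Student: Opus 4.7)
The plan is to read off the corollary directly from \autoref{lem_Hilb_poly_mult_free} by matching two expressions of the Hilbert polynomial $P_X(\ttt)$ in a common basis. Recall that the Hilbert coefficients are defined via the expansion
\[
P_X(\ttt) \;=\; \sum_{\bn \in \NN^p} \deg_\PP^\bn(X) \binom{t_1+n_1}{n_1}\cdots\binom{t_p+n_p}{n_p},
\]
and the set $\hsupp_\PP(X)$ is by definition the support of the family $\{\deg_\PP^\bn(X)\}_{\bn \in \NN^p}$.

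On the other hand, \autoref{lem_Hilb_poly_mult_free} provides the alternative formula
\[
P_X(\ttt) \;=\; \sum_{\bn \in \NN^p} c_\bn(X) \binom{t_1+m_1-n_1}{m_1-n_1}\cdots \binom{t_p+m_p-n_p}{m_p-n_p},
\]
and the first part of that lemma guarantees $c_\bn(X) = 0$ whenever $n_i > m_i$ for some $i$. So, setting $\bn' := \bm - \bn$, the sum on the right can be re-indexed over $\bn' \in \NN^p$ with $\bn' \le \bm$, giving
\[
P_X(\ttt) \;=\; \sum_{\substack{\bn' \in \NN^p \\ \bn' \le \bm}} c_{\bm-\bn'}(X) \binom{t_1+n_1'}{n_1'}\cdots \binom{t_p+n_p'}{n_p'}.
\]

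The main (and only) step is then to invoke the fact that $\left\{\prod_{i=1}^p \binom{t_i+n_i}{n_i}\right\}_{\bn \in \NN^p}$ is a $\QQ$-basis of $\QQ[\ttt]$, so the coefficients in the two expansions of $P_X(\ttt)$ must agree termwise. This yields $\deg_\PP^{\bn'}(X) = c_{\bm-\bn'}(X)$ for every $\bn' \le \bm$, together with $\deg_\PP^{\bn'}(X) = 0$ whenever $\bn' \not\le \bm$. Equivalently, $\bn \in \supp(\K(X;\zzz))$ if and only if $\bn \le \bm$ and $\bm - \bn \in \hsupp_\PP(X)$, which gives the claimed identity $\supp(\K(X;\zzz)) = (m_1,\ldots,m_p) - \hsupp_\PP(X)$.

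There is no real obstacle here; the work has already been done in \autoref{lem_Hilb_poly_mult_free}, and the corollary is a bookkeeping consequence of comparing coefficients in the binomial basis together with the vanishing $c_\bn(X) = 0$ for $\bn \not\le \bm$ that ensures the change of variable $\bn \mapsto \bm - \bn$ stays inside $\NN^p$.
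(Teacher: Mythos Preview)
Your proposal is correct and follows exactly the route the paper intends: the corollary is stated without proof immediately after \autoref{lem_Hilb_poly_mult_free}, and your coefficient comparison in the binomial basis, together with the vanishing $c_\bn(X)=0$ for $\bn\not\le\bm$, is precisely the implicit argument.
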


\begin{example} \label{exam_main_sec_1} 
		Let $I_w\subset S$ be the ideal in our running example \autoref{sec_running_example} and $X=\multProj(S/I_w) \subset \big(\PP_\kk^4\big)^5$. 
		By \autoref{cor_transl_hilb_K} we have $\supp(\K(X;\zzz))=(4,4,4,4,4)-\hsupp_\PP(X)$. Then, from \autoref{exam_mult_free_sec} we obtain:
	\begin{equation*}\label{eq:K-example}
		\K(X;\zzz)=\zzz^{310}+\zzz^{301}+\zzz^{220}+\zzz^{211}+\zzz^{130}+\zzz^{121}+\zzz^{031}-\zzz^{320}-2\zzz^{311}-\zzz^{230}-2\zzz^{221}-2\zzz^{131}+\zzz^{321}+\zzz^{231},
	\end{equation*}
	where $\zzz^{n_1n_2n_3} := z_1^{n_1}z_2^{n_2}z_3^{n_3}$.
\end{example}

For the sake of completeness, we provide the following multiprojective version of \cite[Exercise II.5.14]{HARTSHORNE}.

\begin{lemma}
	\label{lem_equal_sect_ring}
	Let $X \subset \PP=\PP_\kk^{m_1} \times_{\kk} \cdots \times_{\kk} \PP_\kk^{m_p}$ be a variety and $\fP \subset S$ be the corresponding $\NN^p$-graded prime ideal. 
	If $R = S/\fP$ is a normal domain, then we have the equality 
	$$
	R \;=\; \bigoplus_{(v_1,\ldots,v_p) \in \NN^p} \, \HH^0\left(X, \OO_X(v_1,\ldots,v_p)\right).
	$$
\end{lemma}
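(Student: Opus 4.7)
The plan is to adapt the standard proof for the single-graded case (cf.~\cite[Exercise II.5.14]{HARTSHORNE}) to the multigraded setting. There is a canonical $\NN^p$-graded morphism
\[
\varphi : R \;\longrightarrow\; R^{\mathrm{sat}} \;:=\; \bigoplus_{\fv \in \NN^p} \HH^0\big(X, \OO_X(\fv)\big),
\]
arising from the sheafification of homogeneous elements; the goal is to show that $\varphi$ is an isomorphism.

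First I would establish injectivity. Since $X$ is a nonempty variety, the prime $\fP$ does not contain any $\pp_i := (x_{i,0},\ldots,x_{i,m_i})$, whence $\nn R \neq 0$. Combined with the fact that $R$ is a domain, this forces $\varphi$ to be injective.

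For surjectivity, I would cover $X$ by principal affine opens. For each $i \in [p]$, pick a nonzero $f_i \in R_{\ee_i}$, and for any $\bw = (w_1,\ldots,w_p) \in \NN^p$ with all $w_i > 0$, set $f_\bw := f_1^{w_1} \cdots f_p^{w_p} \in R_\bw$. The open subsets $D_+(f_\bw) \subset X$ are affine with coordinate ring $\big(R[f_\bw^{-1}]\big)_{\mathbf{0}}$, and more generally $\HH^0\big(D_+(f_\bw), \OO_X(\fv)\big) = \big(R[f_\bw^{-1}]\big)_\fv$ for any $\fv \in \ZZ^p$. As the choices of the $f_i$'s vary, these opens cover $X$. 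Given a section $s \in R^{\mathrm{sat}}_\fv$, the compatible restrictions $s_\bw \in \big(R[f_\bw^{-1}]\big)_\fv$ determine a well-defined element of $\bigcap_{f_\bw} R[f_\bw^{-1}]_\fv$ viewed inside $\mathrm{Frac}(R)$.

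Finally I would invoke the normality of $R$: as a normal Noetherian domain, $R = \bigcap_{\mathrm{ht}(\pp) = 1} R_\pp$, so it suffices to show $s \in R_\pp$ for every height-one prime $\pp$ of $R$. When some $f_\bw$ is not in $\pp$, we have $s \in R[f_\bw^{-1}] \subset R_\pp$ directly. The main obstacle is the remaining case in which $\pp$ contains every possible $f_\bw$; by primality (and the freedom in choosing the $f_i$'s), this forces $\pp \supset R_{\ee_i}$ and hence $\pp \supset \pp_i R$ for some $i$. Such a $\pp$ corresponds to a codimension-one component of the locus $\{x_{i,0} = \cdots = x_{i,m_i} = 0\}$ in $\Spec R$. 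To treat these degenerate primes I would proceed by induction on the ambient dimension $m_1 + \cdots + m_p$: either $\Pi_i(X) = \PP_{\kk}^{m_i}$ (in which case $\pp_i R$ has height $\geq 2$ and no such $\pp$ exists), or else we may embed $\Pi_i(X)$ into a smaller projective space and reduce to a strictly smaller instance of the lemma, while preserving normality as a retract.
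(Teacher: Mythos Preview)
Your injectivity step and the description of sections on $D_+(f_\bw)$ are fine, but the final step contains a genuine gap: the dichotomy you invoke for the ``degenerate'' height-one primes is false. Concretely, take $X = V(x_{1,0}x_{2,0} - x_{1,1}x_{2,1}) \subset \PP^1_\kk \times_\kk \PP^1_\kk$. Then $R = S/\fP$ is a normal domain (a quadric cone, regular away from the origin), and both projections are surjective, so $\Pi_1(X) = \PP^1_\kk$. Yet $R/\pp_1 R \cong \kk[x_{2,0},x_{2,1}]$ has dimension $2$ while $\dim R = 3$, so $\pp_1 R$ is a height-one prime. Thus ``$\Pi_i(X) = \PP^{m_i}$ implies $\text{ht}(\pp_i R) \ge 2$'' fails, and your first branch does not cover this case. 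The second branch is also problematic: ``embed $\Pi_i(X)$ into a smaller projective space'' only makes sense when $\Pi_i(X)$ is linearly degenerate, and even then it is unclear how replacing the $i$-th factor by a smaller $\PP^{m_i'}$ lets you conclude $s \in R_\pp$ for the original $\pp$ in the original $R$. The underlying obstruction is that, unlike the single-graded situation (where the irrelevant ideal has height $\dim X + 1 \ge 2$), in the multigraded setting each $\pp_i R$ can genuinely have height one, and every $f_\bw$ you have written down lies in it.

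The paper's proof avoids this difficulty by not attempting to check membership prime by prime. Instead it observes $R \subset R' \subset \Quot(R)$ and shows that $R'$ is a \emph{finite} $R$-module; normality then forces $R' = R$. Finiteness is obtained by identifying $R'/R$ with $\big[\HH_\nn^1(R)\big]_{\ge \mathbf{0}}$ via the exact sequence $0 \to R \to \bigoplus_{\fv \in \ZZ^p}\HH^0(X,\OO_X(\fv)) \to \HH_\nn^1(R) \to 0$, and then using a spectral sequence coming from a free resolution of $R$ to reduce to the explicit description of $\HH_\nn^j(S)$, whose nonnegative part is visibly finitely generated. This global finiteness argument is what replaces the delicate prime-by-prime analysis and is exactly the point where the multigraded case departs from the naive adaptation of the single-graded proof.
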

\begin{proof}
	Let $R' = \bigoplus_{(v_1,\ldots,v_p) \in \NN^p} \, \HH^0\left(X, \OO_X(v_1,\ldots,v_p)\right)$.
	We have that $R \subset R' \subset \Quot(R)$.
	Thus to complete the proof it suffices to prove that $R'$ is a finitely generated $R$-module;
	we proceed to show that.
	
	From \cite[Corollary 1.5]{HYRY_MULTGRAD}, we have the short exact sequence 
	$$
	0 \rightarrow R \rightarrow \bigoplus_{(v_1,\ldots,v_p) \in \ZZ^p} \HH^0\left(X, \OO_X(v_1,\ldots,v_p)\right) \rightarrow \HH_\nn^1(R) \rightarrow 0.
	$$
	It is then enough to show that $\big[\HH_\nn^1(R)\big]_{\ge \mathbf{0}} = \bigoplus_{(v_1,\ldots,v_p) \in \NN^p} \left[\HH_\nn^1(R)\right]_{(v_1,\ldots,v_p)}$ is a finitely generated $S$-module.
	Let $F_\bullet : \cdots \rightarrow F_1 \rightarrow F_0 \rightarrow R \rightarrow 0$ be an $\NN^p$-graded free $S$-resolution of $R$ by modules of finite rank.
	Let $\mathcal{C}_\nn^\bullet$ be the \v{C}ech complex corresponding to a minimal set of generators of the ideal $\nn$.
	We consider the double complex $F_\bullet \otimes_S \mathcal{C}_\nn^\bullet$. 
	By analyzing the corresponding spectral sequences, we obtain a converging spectral sequence 
	$$
	E_1^{-p,q} = \HH_\nn^q(F_p) \Longrightarrow \HH_\nn^{q-p}(R).
	$$		
	By \cite[Proposition 3.11]{SAT_SPEC_FIB}, we have the description 
	$$
	\HH_\nn^j(S) \;=\; \bigoplus_{\substack{\fJ \subseteq [p] \\
			\langle\fJ\rangle +1= j}} \left(\bigotimes_{i \in \fJ} \frac{1}{\bx_i}\kk[\bx_i^{-1}]\right) \otimes_\kk \left(\bigotimes_{i \not\in \fJ}\kk[\bx_i]\right)
	$$
	where $\bx_i = \lbrace x_{i,0},\ldots,x_{i,m_i} \rbrace$ and $\langle\fJ\rangle = \sum_{i \in \fJ} m_i$.
	This shows that $\big[\HH_\nn^j(S)\big]_{\ge \mathbf{0}}$ is a finitely generated $S$-module for all $j \ge 0$.
	Consequently, we obtain that $\big[\HH_\nn^j(R)\big]_{\ge \mathbf{0}}$ is a finitely generated $S$-module for all $j \ge 0$, hence completing the proof.
\end{proof}

We now introduce some necessary notation to enunciate our main theorem. 
This notation is inspired by a known description of Hilbert series in the single graded setting (see \cite[Theorem 4.4.3]{BRUNS_HERZOG}).

\begin{notation}
	\label{nota_bounds}
	Let $X \subset \PP=\PP_\kk^{m_1} \times_{\kk} \cdots \times_{\kk} \PP_\kk^{m_p}$ be a variety. 
	For any nonempty subset $\fJ = \{j_1,\ldots,j_k\} \subsetneq [p]$, we define the bigraded section ring
	$$
	\mathcal{S}_{\fJ}(X) \;:=\; \bigoplus_{u,v \in \NN} \left[\mathcal{S}_\fJ(X)\right]_{(u,v)} \quad \text{ where }\quad  \left[\mathcal{S}_\fJ(X)\right]_{(u,v)} \;:=\; \bigoplus_{\substack{(v_1,\ldots,v_p) \in \NN^p\\ \sum_{j\in \fJ}v_j =u, \;\; \sum_{j\notin \fJ}v_j =v }} \HH^0\left(X, \OO_X(v_1,\ldots,v_p)\right).
	$$
	When $\fJ = [p]$, we define the graded section ring 
	$$
	\mathcal{S}(X)  \;:=\; \bigoplus_{u \in \NN} \left[\mathcal{S}(X)\right]_{u} \quad\text{ where }\quad \left[\mathcal{S}(X)\right]_{u} \;:=\; \bigoplus_{\substack{(v_1,\ldots,v_p) \in \NN^p\\ \sum_{j\in [p]}v_j=u }} \HH^0\left(X, \OO_X(v_1,\ldots,v_p)\right).
	$$
	Then, for any nonempty subset $\fJ \subseteq [p]$, we define the following invariant
	$$
	b_\fJ(X, \PP) \;:= \; \begin{cases}
		\codim(X, \PP) + \reg\left(\mathcal{S}(X)\right) & \text{ if } \fJ = [p]\\
		\sum_{j \in \fJ} (m_j+1) + \max\big\lbrace u \in \ZZ \mid P_{\mathcal{S}_\fJ(X)}(u,v) \neq h_{\mathcal{S}_\fJ(X)}(u,v) \text{ for } v \gg 0\big\rbrace  & \text{ if } \fJ \subsetneq [p].
	\end{cases}
	$$
	Here $\reg\left(\mathcal{S}(X)\right)$ denotes the Castelnuovo-Mumford regularity of the graded $\kk$-algebra $\mathcal{S}(X)$.
	We also define the codimension relative to the subset $\fJ$ as 
	$$
	\codim_\fJ\left(X, \PP\right) \;:=\; \codim\left(\Pi_\fJ(X), \Pi_\fJ(\PP)\right),
	$$
	where $\codim\left(\Pi_\fJ(X), \Pi_\fJ(\PP)\right)$ denotes the codimension of $\Pi_\fJ(X)$ in $\Pi_\fJ(\PP) = \PP_\kk^{m_{j_1}} \times_\kk \cdots \times_\kk \PP_\kk^{m_{j_k}}$.
\end{notation}

In the case of multiplicity-free varieties, the next proposition gives a formula for the degree of the twisted $K$-polynomial in terms of any subset of variables. 

\begin{proposition}
	\label{prop_deg_twisted_K_poly}
	Let $X \subset \PP=\PP_\kk^{m_1} \times_{\kk} \cdots \times_{\kk} \PP_\kk^{m_p}$ be a multiplicity-free variety.
	Consider the twisted $K$-polynomial $\K(X;\zzz) = \sum_\bn c_\bn(X) \,\zzz^\bn$ of $X$.
	Let $\fJ = \{j_1,\ldots,j_k\} \subset [p]$ be a subset.
	Then we have the equality 
	\begin{equation*}
		\deg_{z_{j_1},\ldots,z_{j_k}}\left(\K(X;\zzz)\right) \;=\; b_\fJ(X, \PP),
	\end{equation*}
	where $\deg_{z_{j_1},\ldots,z_{j_k}}\left(\K(X;\zzz)\right)$ denotes the degree of $\K(X;\zzz)$ in terms of the variables $z_{j_1},\ldots,z_{j_k}$.
\end{proposition}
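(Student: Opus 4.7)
The plan is to show that both $\deg_{z_{j_1},\ldots,z_{j_k}}(\K(X;\zzz))$ and $b_\fJ(X,\PP)$ equal the single integer $\alpha' := \max\{u_\fJ(\bn) : \bn \in \supp(\K(X;\zzz))\}$, where $u_\fJ(\bn) := \sum_{j\in\fJ}n_j$ and $u_{\fJ^c}(\bn) := |\bn|-u_\fJ(\bn)$ (with $\fJ^c := [p]\setminus\fJ$). The first equality is immediate from the definition of the degree in the variables $z_{j_1},\ldots,z_{j_k}$; by \autoref{cor_transl_hilb_K} one may equivalently write $\alpha' = \sum_{j\in\fJ}m_j - \min\{u_\fJ(\bw) : \bw\in\hsupp_\PP(X)\}$. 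The heart of the argument is the second equality, which I would prove by routing through the Hilbert series of the (bi)graded section rings $\mathcal{S}(X)$ and $\mathcal{S}_\fJ(X)$, handling $\fJ=[p]$ and $\fJ\subsetneq[p]$ separately.

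The common setup proceeds as follows. Because $R = S/\fP$ is a normal Cohen-Macaulay domain by \autoref{thm_gin_mult_free}, \autoref{lem_equal_sect_ring} identifies $R$ with $\bigoplus_{\bv\in\NN^p}\HH^0(X,\OO_X(\bv))$; in particular $\mathcal{S}(X)$ and $\mathcal{S}_\fJ(X)$ are obtained from $R$ by coarsening the $\NN^p$-grading. Starting from $\Hilb_R(\ttt) = \K(X;\ttt)/\prod_i(1-t_i)^{m_i+1}$, specializing $t_j\mapsto s$ for $j\in\fJ$ and $t_j\mapsto t$ for $j\notin\fJ$, and rewriting the numerator via $z_i = 1-t_i$, I obtain
\[
\Hilb_{\mathcal{S}_\fJ(X)}(s,t) \;=\; \frac{N(s,t)}{(1-s)^{A_\fJ}(1-t)^{A_{\fJ^c}}}, \qquad N(s,t) \;=\; \sum_\bn c_\bn(X)(1-s)^{u_\fJ(\bn)}(1-t)^{u_{\fJ^c}(\bn)},
\]
with $A_\fJ := \sum_{j\in\fJ}(m_j+1)$ and $A_{\fJ^c} := \sum_{j\notin\fJ}(m_j+1)$. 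The decisive input is the sign alternation $(-1)^{|\bn|-\codim(X,\PP)}c_\bn(X)\ge 0$, which follows from \autoref{thm_gin_mult_free}(iv) combined with \autoref{cor_positivity_shellable}. Consequently, for any fixed pair $(u_\fJ(\bn), u_{\fJ^c}(\bn))$ all contributing $c_\bn(X)$ share the same sign, and their sums cannot cancel.

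For $\fJ=[p]$ the algebra $\mathcal{S}(X)$ is standard graded Cohen-Macaulay of dimension $\dim R = A-\codim(X,\PP)$, where $A=\sum_i(m_i+1)$. Cancelling $(1-s)^{\codim(X,\PP)}$ from $\Hilb_{\mathcal{S}(X)}(s) = N(s,s)/(1-s)^A$ exhibits the $h$-polynomial of $\mathcal{S}(X)$ as $\sum_\bn c_\bn(X)(1-s)^{|\bn|-\codim(X,\PP)}$; by the sign alternation its $s$-degree is exactly $\alpha'-\codim(X,\PP)$. Since for a Cohen-Macaulay standard graded $\kk$-algebra the regularity equals the degree of the $h$-polynomial, $\reg(\mathcal{S}(X)) = \alpha'-\codim(X,\PP)$, and therefore $b_{[p]}(X,\PP) = \alpha'$.

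For $\fJ\subsetneq[p]$ every summand in $\Hilb_{\mathcal{S}_\fJ(X)}(s,t)$ has the form $c_\bn(X)/((1-s)^a(1-t)^b)$ with $a,b\ge 1$, so with $B_a(x) := \binom{x+a-1}{a-1}$ (whose roots lie in $\{-1,\ldots,-(a-1)\}$) both the Hilbert function and Hilbert polynomial of $\mathcal{S}_\fJ(X)$ agree with $\sum_\bn c_\bn(X)B_{A_\fJ-u_\fJ(\bn)}(u)B_{A_{\fJ^c}-u_{\fJ^c}(\bn)}(v)$ on $\{u,v\ge 0\}$. Hence for $v\gg 0$, the condition $P(u,v)\ne h(u,v)=0$ reduces to $u<0$ with $P(u,\cdot)\not\equiv 0$ as a polynomial in $v$. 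Collecting by powers of the linearly independent $\{B_{A_{\fJ^c}-j}(v)\}_j$, this becomes a statement about the inner sums $\sum_{\bn:\,u_{\fJ^c}(\bn)=j}c_\bn(X)B_{A_\fJ-u_\fJ(\bn)}(u)$. Using that $B_a(u)\ne 0$ for $u<0$ iff $u\le -a$, at $u=\alpha'-A_\fJ$ only the $\bn$ with $u_\fJ(\bn)=\alpha'$ contribute and the inner sums are nonzero by the sign alternation, while for every $u\in\{\alpha'-A_\fJ+1,\ldots,-1\}$ every relevant binomial factor $B_{A_\fJ-u_\fJ(\bn)}(u)$ vanishes, forcing $P(u,\cdot)\equiv 0$. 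Thus $\max\{u:P(u,v)\ne h(u,v)\text{ for }v\gg 0\}=\alpha'-A_\fJ$, giving $b_\fJ(X,\PP)=A_\fJ+(\alpha'-A_\fJ)=\alpha'$. The main obstacle is precisely this last step: pinning down the sharp threshold $\alpha'-A_\fJ$ by a careful root analysis of the binomials $B_a$, and invoking the multiplicity-free hypothesis through the sign alternation of \autoref{cor_positivity_shellable} to guarantee that the critical inner sum does not collapse to zero.
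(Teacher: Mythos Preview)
Your proof is correct and follows essentially the same strategy as the paper: coarsen the $\NN^p$-grading via \autoref{lem_equal_sect_ring} to pass to $\mathcal{S}(X)$ or $\mathcal{S}_\fJ(X)$, invoke the sign alternation from \autoref{cor_positivity_shellable} and \autoref{thm_gin_mult_free}(iv) to rule out cancellation under the substitution, and then read off the desired degree from the resulting (bi)graded Hilbert data. Your binomial root analysis for the case $\fJ\subsetneq[p]$ spells out in detail what the paper compresses into a single line; the only place you might tighten the exposition is to cite \autoref{lem_Hilb_poly_mult_free} explicitly when asserting $a,b\ge 1$ for the exponents in the bigraded Hilbert series.
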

\begin{proof}
	Let $\fP \subset S$ be the $\NN^p$-graded prime ideal corresponding to $X$, and set $R = S/\fP$.		
	By \autoref{lem_equal_sect_ring} and \autoref{thm_gin_mult_free}, we have the equalities 
	$$
	\left[\mathcal{S}(X)\right]_{u} \;=\; \bigoplus_{\substack{(v_1,\ldots,v_p) \in \NN^p\\ \sum_{j\in [p]}v_j=u }} R_{(v_1,\ldots,v_p)}
	\quad \text{ and } \quad 
	\left[\mathcal{S}_\fJ(X)\right]_{(u,v)} \;=\; \bigoplus_{\substack{(v_1,\ldots,v_p) \in \NN^p\\ \sum_{j\in \fJ}v_j =u, \;\; \sum_{j\notin \fJ}v_j =v }} R_{(v_1,\ldots,v_p)}
	$$
	for any $\emptyset \subsetneq \fJ \subsetneq [p]$.

	We first assume $\fJ = [p]$.
	Since $\Hilb_{\mathcal{S}(X)}(t) = \Hilb_R(t,\ldots,t)$, we have an equality $\K(\mathcal{S}(X);z) = \K(X;z,\ldots,z)$ in terms of twisted $K$-polynomials.
	From \autoref{cor_positivity_shellable} and \autoref{thm_gin_mult_free}, we know that the sign of $c_\bn(R)$ changes depending on the parity of $|\bn|$.
	The above substitution is degree preserving, and so we must have $\deg\left(\K(\mathcal{S}(X);z)\right) = \deg\left(\K(X;\zzz)\right)$.
	Since $\mathcal{S}(X)$ is  Cohen-Macaulay (see \autoref{thm_gin_mult_free}), \cite[Theorem 4.4.3]{BRUNS_HERZOG} yields the necessary equality 
	$
	\deg\left(\K(\mathcal{S}(X);z)\right) = \codim(P) + \reg(\mathcal{S}(X)) = b_{[p]}(X, \PP).
	$
	So the case $\fJ = [p]$ is complete. 
	
	We now consider a nonempty subset $\fJ = \{j_1,\ldots,j_k\} \subsetneq [p]$.
	Without any loss of generality, we may assume that $\fJ = \{1, \ldots, k\}$.
	To simplify notation, let 
	$$
	T = \mathcal{S}_\fJ(X), \quad M_1 = -1+ \sum_{j \in \fJ}(m_j+1)  \quad \text{and}  \quad M_2 = -1+\sum_{j \notin \fJ}(m_j+1). 
	$$
	Again, after making substitutions $t_1 = t', \ldots, t_k=t'$ and $t_{k+1}=t'',\ldots,t_p=t''$, we get
	$\Hilb_T(t',t'') = \Hilb_R(t', \ldots,t',t'',\ldots,t'')$, and so we obtain the following equality in terms of twisted $K$-polynomials
	$$
	\K\left(T; z',z''\right) \;=\; \K\left(X; z',\ldots,z',z'',\ldots,z''\right).
	$$
	As the sign of $c_\bn(X)$ alternates depending on $|\bn|$ (see \autoref{cor_positivity_shellable} and \autoref{thm_gin_mult_free}) and the above substitution is degree preserving, it follows that $\deg_{z'}\left(\K(T;z',z'')\right) = \deg_{z_{1},\ldots,z_{k}}\left(\K(X;\zzz)\right)$.
	Due to \autoref{lem_Hilb_poly_mult_free}, we also have that $c_{(n_1,n_2)}(T) = 0$ whenever $n_1 \ge M_1+1$ or $n_2 \ge M_2 +1$.
	After the above reductions, we need to compute $\deg_{z'}\left(\K(T;z',z'')\right)$. 
	We also know that the Hilbert polynomial of $T$ is given by 
	$$
	P_{T}(t_1,t_2) \;=\; \sum_{n_1,n_2 \in \NN} c_{(n_1,n_2)}(T) \binom{t_1+M_1-n_1}{M_1-n_1} \binom{t_2+M_2-n_2}{M_2-n_2}
	$$
	(the proof of this fact follows verbatim the proof of \autoref{lem_K_poly_reduce_Hilb}).
	From this we can conclude that
	$$
	-\left(M_1+1-\deg_{z'}\left(\K(T;z',z'')\right)\right) \;=\;  \max\big\lbrace u \in \ZZ \mid P_T(u,v) \neq h_T(u,v) \;\text{  for } v \gg 0 \big\rbrace,
	$$
	and so the proof is complete.
\end{proof}

Finally, we are ready to state and prove the main result of this paper. 

\begin{theorem}
	\label{main}
	Let $X \subset \PP=\PP_\kk^{m_1} \times_{\kk} \cdots \times_{\kk} \PP_\kk^{m_p}$ be a multiplicity-free variety. 
	Then $\supp\left(  \mathcal{K}(X;\zzz)\right)$ is a g-polymatroid. 
	Furthermore, we have the specific description that $\bn = (n_1,\ldots,n_p) \in \supp(\mathcal{K}(X;\zzz))$ if and only if the following inequality holds 
	$$
	\codim_\fJ\left(X, \PP\right) \;\le\; \sum_{j \in \fJ} n_j \;\le\;  b_\fJ\left(X, \PP\right) 
	$$
	for any nonempty subset $\fJ \subseteq [p]$.
\end{theorem}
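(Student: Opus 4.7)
The plan is to combine three already-established ingredients: the translation from \autoref{cor_transl_hilb_K}, which identifies $\supp(\K(X;\zzz))$ with the reflection $(m_1,\ldots,m_p) - \hsupp_\PP(X)$; the g-polymatroid property of $\hsupp_\PP(X)$ from \autoref{thm_hsupp_polymatroid}; and the degree computation of \autoref{prop_deg_twisted_K_poly}.

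I would first check directly from \autoref{def_generalized_polymatroid} that the reflection $\fu \mapsto (m_1,\ldots,m_p)-\fu$ preserves g-polymatroids: given $\fu',\fv'$ in the reflected set with $[\fu']_i > [\fv']_i$, the preimages $\fu,\fv$ satisfy $[\fv]_i > [\fu]_i$, and applying \exc to $(\fv,\fu)$ at index $i$ translates back, under reflection, to one of the two \exc cases for $(\fu',\fv')$ at index $i$; the \expa axiom is handled dually. This gives the g-polymatroid assertion.

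For the inequality description, I invoke the classical fact from g-polymatroid theory that a g-polymatroid $\G \subseteq \NN^p$ equals the set of lattice points satisfying $\min\{\sum_{j\in\fJ}n_j : \fn \in \G\} \le \sum_{j\in\fJ}n_j \le \max\{\sum_{j\in\fJ}n_j : \fn \in \G\}$ for all nonempty $\fJ\subseteq[p]$. What remains is to identify these extrema for $\G = \supp(\K(X;\zzz))$. The upper bound comes from \autoref{prop_deg_twisted_K_poly}, which gives $\max\{\sum_{j\in\fJ}n_j : \fn \in \supp(\K(X;\zzz))\} = \deg_{z_{j_1},\ldots,z_{j_k}}(\K(X;\zzz)) = b_\fJ(X,\PP)$. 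For the lower bound, the reflection converts the computation into $\max\{\sum_{j\in\fJ}a_j : \fa\in \hsupp_\PP(X)\}$, which I would show equals $\dim(\Pi_\fJ(X))$: every $\fa\in\hsupp_\PP(X)$ is dominated componentwise by some $\fw\in\msupp_\PP(X)$ via \autoref{cor_path_connected} (applicable thanks to the shellability in \autoref{thm_gin_mult_free}(iv)), and then \autoref{thm_pos_multdeg} forces $\sum_{j\in\fJ}a_j \le \sum_{j\in\fJ}w_j \le \dim(\Pi_\fJ(X))$; equality is attained because, by \autoref{prop_mult_free_statements}(ii), $\Pi_\fJ(X)$ is multiplicity-free with a full-dimensional multidegree that lifts to a multidegree of $X$. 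Subtracting from $\sum_{j\in\fJ}m_j$ produces exactly $\codim_\fJ(X,\PP)$.

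The main obstacle is this lower-bound identification: the required componentwise dominance of each point of $\hsupp_\PP(X)$ by some point of $\msupp_\PP(X)$ is not automatic from the g-polymatroid structure and genuinely relies on the shellability of the multigraded generic initial ideal via \autoref{cor_path_connected}. The reflection argument and the upper-bound identification are routine by comparison.
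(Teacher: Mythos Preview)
Your proposal is correct and follows essentially the same architecture as the paper's proof: reflect via \autoref{cor_transl_hilb_K}, invoke \autoref{thm_hsupp_polymatroid}, then identify the two bounds, with the upper one coming from \autoref{prop_deg_twisted_K_poly}. The only differences are that the paper outsources the reflection-invariance of g-polymatroids and the inequality description to \cite[Theorems 14.2.1, 14.2.8]{FRANK}, and handles the lower bound in one line by citing \cite[Proposition 3.1]{POSITIVITY} (which already records that $\max\{\sum_{j\in\fJ}w_j : \bw\in\msupp_\PP(X)\}=\dim(\Pi_\fJ(X))$); your detour through \autoref{cor_path_connected} and \autoref{prop_mult_free_statements}(ii) is unnecessary, since \autoref{thm_pos_multdeg} alone gives both the domination and the attainment.
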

\begin{proof}
Translations and reflections of g-polymatroids are g-polymatroids \cite[Theorem 14.2.1]{FRANK}, thus the first statement follows from  \autoref{thm_hsupp_polymatroid} and \autoref{cor_transl_hilb_K}.  
Therefore, by  \cite[Theorem 14.2.8]{FRANK}, we have that $\supp\left(  \mathcal{K}(X;\zzz)\right)$ equals to the set of the integer points of the polytope
\[
\mathscr{G}:=
\left\{\mathbf{y}=(y_1,\ldots, y_p)\in\mathbb{R}^p~\mid~ c(\fJ) \leq \sum_{j\in \fJ} y_j \leq b(\fJ),\;\text{ for all }  \fJ\subseteq [p]\right\},
\]
where $c: 2^{[p]} \rightarrow \ZZ, \,c(\fJ):=\min\{\sum_{j\in \fJ}y_j\mid \mathbf{y}\in \supp\left(  \mathcal{K}(X;\zzz)\right)\}$  and $b: 2^{[p]} \rightarrow \ZZ, \,b(\fJ):=\max\{\sum_{j\in \fJ}y_j\mid \mathbf{y}\in \supp\left(  \mathcal{K}(X;\zzz)\right)\}.$ 
By  \cite[Proposition 3.1]{POSITIVITY} and \autoref{cor_transl_hilb_K}, we have $c(\fJ)=\codim_\fJ\left(X, \PP\right)$. 
Finally, the maximum $b(\fJ)$ is by definition the degree of the twisted $K$-polynomial in the variables $\{z_j \mid j\in \fJ\}$. Then, by \autoref{prop_deg_twisted_K_poly}, the proof is complete.
\end{proof}

\begin{example} \label{exam_main_sec_2} 
	Let $X$ be as in \autoref{exam_main_sec_1}. From the explicit description of $\K(X;\zzz)$ therein we obtain that $\bn=(n_1,n_2,n_3)\in \supp(\K(X;\zzz))$ if and only if the following inequalities hold:
	\begin{align*}
		&4\le n_1+n_2+n_3 \le 6, \qquad
		3\le n_1+n_2 \le 5, \qquad 	1\le n_1+n_3 \le 4,\qquad1	\le n_2+n_3 \le 4, \\
	    &0\le n_1 \le 3, \qquad 0	\le n_2 \le 3, \qquad	0\le n_3 \le 1.
		\end{align*}
We note that  these inequalities coincide with the ones in  \autoref{main}.
\end{example}

\subsection{M\"obius function}\label{mobius_sec}
Let $X\subset \PP  =\PP_\kk^{m_1} \times_{\kk} \cdots \times_{\kk} \PP_\kk^{m_p}$ be a multiplicity-free variety and set $\mathscr{P}:=\msupp_\PP(X)$ which, by \autoref{thm_pos_multdeg}, is a polymatroid. 
Consider 
$
\mathscr{P}_{\le}:=\{ \fu\in\NN^p ~\mid~ \fu \ls \fv \text{ for some }   \fv\in \mathscr{P}\},
$
and the poset  
   $\Gamma:=\mathscr{P}_{\ls }\cup\{\hat{1}\}$ given by the componentwise order $\leq$ 
and such that $\hat{1}$ is larger than any element in $\mathscr{P}_{\ls }$. 
The minimum element of this set is $ \mathbf{0} $. The {\it meet} and {\it join} of $\Gamma$ are defined as 
$$\fu \wedge \fv:=\min\{\fu,\fv\},\quad \text{and,} \quad 
\fu \vee \fv:=
\begin{cases}
	\max\{\fu,\fv\}& \text{if } \max\{\fu,\fv\}\in \Pl\\
	\hat{1}& \text{otherwise}.
\end{cases}$$
 The {\it M\"obius function} of  $\Gamma$ is the function $\mu_\Gamma:\Gamma\times \Gamma \to \ZZ$  uniquely determined by the conditions $\mu_\Gamma(\fu,\fv)=0$ if $\fu\not\le\fv$, $\mu_\Gamma(\fu,\fu)=1$ for every $\fu \in \Gamma$, and
$
\sum_{\fu\leq\fw\leq \fv} \mu_\Gamma(\fu,\fw) = 0,\text{ for every pair }\fu< \fv.
$
We now reframe \autoref{prop_mult_free_statements}(iii) in the context of posets.

\begin{theorem}\label{thm_deg_mobius}
	For any $\bn \in \Pl$, we have the equality $\deg_\PP^\bn(X) = -\mu_\Gamma(\fn,\hat{1}).$ 
\end{theorem}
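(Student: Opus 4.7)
The plan is to read the summation identity of \autoref{prop_mult_free_statements}(iii) as precisely the defining recursion of $-\mu_\Gamma(\bn,\hat{1})$, and then conclude via M\"obius inversion on the poset $\Gamma$. Only two ingredients are needed: the sum formula itself, and a support containment ensuring that the sum truly ranges over elements of $\Gamma \setminus \{\hat{1}\}$.

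First, I would verify that $\hsupp_\PP(X) \subseteq \Pl$. Since $X$ is multiplicity-free, \autoref{thm_gin_mult_free}(iv) shows that the Stanley--Reisner complex of any multigraded generic initial ideal of the defining prime of $X$ is shellable, and then \autoref{cor_path_connected} produces, for every $\bn \in \hsupp_\PP(X)$, an increasing lattice path from $\bn$ to some $\ba \in \msupp_\PP(X) = \mathscr{P}$. In particular $\bn \le \ba$ coordinatewise, so $\bn \in \Pl$. Consequently, the identity $\sum_{\bw \ge \bn} \deg_\PP^\bw(X) = 1$ from \autoref{prop_mult_free_statements}(iii) is effectively a sum indexed by $\bw \in \Pl = \Gamma \setminus \{\hat{1}\}$, and it is valid for every $\bn \in \Pl$.

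Next, I would set up the M\"obius inversion. Define $F \colon \Gamma \to \ZZ$ by $F(\bw) := \deg_\PP^\bw(X)$ for $\bw \in \Pl$ and $F(\hat{1}) := -1$, and let $G(\bn) := \sum_{\bw \ge \bn} F(\bw)$ denote its zeta-transform on $\Gamma$. The first step yields $G(\bn) = 1 + (-1) = 0$ for every $\bn \in \Pl$, while $G(\hat{1}) = F(\hat{1}) = -1$. Applying M\"obius inversion on $\Gamma$, for each $\bn \in \Pl$ we obtain
$$F(\bn) \;=\; \sum_{\bw \ge \bn} \mu_\Gamma(\bn,\bw)\, G(\bw) \;=\; \mu_\Gamma(\bn,\hat{1}) \cdot (-1) \;=\; -\mu_\Gamma(\bn,\hat{1}),$$
since every summand with $\bw \ne \hat{1}$ vanishes. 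This is precisely the claim.

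I do not anticipate a serious obstacle: the theorem is essentially a repackaging of \autoref{prop_mult_free_statements}(iii) once the containment $\hsupp_\PP(X) \subseteq \Pl$ is in hand. The only genuinely delicate point is to ensure that the summation truly ranges over $\Pl$ rather than over all of $\NN^p$, which is exactly where shellability of the generic initial ideal enters, via \autoref{cor_path_connected}.
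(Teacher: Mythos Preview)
Your proposal is correct and follows essentially the same approach as the paper: both deduce the result from the summation identity in \autoref{prop_mult_free_statements}(iii), recognizing it as the defining recursion for $-\mu_\Gamma(\bn,\hat{1})$. You are simply more explicit than the paper on two points: you spell out the containment $\hsupp_\PP(X)\subseteq\Pl$ (which the paper leaves implicit), and you package the recurrence argument as a clean M\"obius inversion rather than just asserting that ``the same recurrence relation'' forces equality.
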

\begin{proof}
	By \autoref{prop_mult_free_statements}(iii), we have that $\deg_\PP^\bn(X) $ is determined by the same recurrence relation as the M\"obius function.
	We correct the formula with a sign since we are assuming $\deg_\PP^\bn(X)=1$ for every $ \bn\in \msupp_\PP(X) $, whereas $ \mu(\bn,\hat{1})=-1 $ by definition.
\end{proof}

Notice that $\Gamma$ is a \textit{semimodular lattice} \cite[\S 3.3]{stanley2012enumerative}. 
As an application of \autoref{thm_deg_mobius} we obtain another proof of the sign alternation in \autoref{cor_positivity_shellable}:
by \cite[Proposition 3.10.1]{stanley2012enumerative} the M\"obius function of a semimodular lattice, in particular of $\Gamma$, alternates in sign. 

 Let $\mathscr{V}:=(m_1,\ldots, m_p)-\Pl$ and   consider the poset $\Psi=\mathscr{V}\cup \{\hat{0}\}$ with componentwise order and $\hat{0}$ the smallest element. 
 By \autoref{cor_transl_hilb_K}, we obtain the following formula for the twisted $K$-polynomial of a multiplicity-free variety $X \subset \PP = \PP_\kk^{m_1} \times_{\kk} \cdots \times_{\kk} \PP_\kk^{m_p}$.

\begin{theorem}\label{prop:K-poly_mobius}
We have the equality
$
	\mathcal{K}(X; \zzz) = \displaystyle\sum_{\bn\in \mathscr{V}}  -\mu_\Psi(\hat{0},\bn)\zzz^{\bn}.
$
\end{theorem}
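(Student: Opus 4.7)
The plan is to unwind the theorem via three translations already established earlier in the paper: from $K$-coefficients to Hilbert coefficients (via the change of variables $\zzz \mapsto \fone - \zzz$), from Hilbert coefficients of a multiplicity-free variety to values of the M\"obius function on $\Gamma$, and finally from the M\"obius function on $\Gamma$ to the M\"obius function on $\Psi$ by means of the anti-isomorphism $\bw \mapsto \bm - \bw$. Writing $\bm = (m_1,\ldots,m_p)$, the first step is to observe that by \autoref{lem_Hilb_poly_mult_free} the identity
\[
P_X(\ttt) \;=\; \sum_{\bn \in \NN^p} c_\bn(X)\,\binom{t_1+m_1-n_1}{m_1-n_1}\cdots\binom{t_p+m_p-n_p}{m_p-n_p}
\]
holds, which after the substitution $\bn' = \bm - \bn$ and comparison with the definition of the Hilbert coefficients $\deg_\PP^{\bn'}(X)$ gives $c_\bn(X) = \deg_\PP^{\bm - \bn}(X)$. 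Consequently $\supp(\mathcal{K}(X;\zzz)) = \bm - \hsupp_\PP(X) \subseteq \bm - \mathscr{P}_\leq = \mathscr{V}$ (the containment holds because every element of $\hsupp_\PP(X)$ lies in some stalactite from a point of $\msupp_\PP(X) = \mathscr{P}$, see \autoref{lem_stalactite_mult_free}), so only indices $\bn \in \mathscr{V}$ contribute to $\mathcal{K}(X;\zzz)$.

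Second, I invoke \autoref{thm_deg_mobius}, which gives $\deg_\PP^{\bw}(X) = -\mu_\Gamma(\bw,\hat{1})$ for every $\bw \in \mathscr{P}_\leq$. Combining with the first step yields
\[
\mathcal{K}(X;\zzz) \;=\; \sum_{\bn \in \mathscr{V}} \deg_\PP^{\bm - \bn}(X)\,\zzz^\bn \;=\; -\sum_{\bn \in \mathscr{V}} \mu_\Gamma(\bm - \bn,\hat{1})\,\zzz^\bn.
\]

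Third, I will identify $\mu_\Gamma(\bm - \bn,\hat{1})$ with $\mu_\Psi(\hat{0},\bn)$. Consider the map $\phi : \Gamma \to \Psi$ defined by $\phi(\bw) = \bm - \bw$ for $\bw \in \mathscr{P}_\leq$ and $\phi(\hat{1}) = \hat{0}$. Since $\bw_1 \leq \bw_2$ componentwise if and only if $\bm - \bw_1 \geq \bm - \bw_2$ componentwise, and since $\hat{1}$ is the maximum of $\Gamma$ while $\hat{0}$ is the minimum of $\Psi$, the map $\phi$ is an anti-isomorphism of posets. Because the M\"obius function is invariant under anti-isomorphism with reversal of its arguments, we obtain $\mu_\Gamma(\bm - \bn,\hat{1}) = \mu_\Psi(\hat{0},\bn)$ for every $\bn \in \mathscr{V}$, and plugging this into the previous display finishes the proof.

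The argument is essentially a chain of previously established translations, so no step should present a serious obstacle; the only mildly delicate point is verifying that $\phi$ is indeed a poset anti-isomorphism that sends the artificially added top element $\hat{1}$ to the artificially added bottom element $\hat{0}$, and then invoking the standard fact that M\"obius functions of opposite posets satisfy $\mu_{\Gamma}(a,b) = \mu_{\Gamma^{\mathrm{op}}}(b,a)$ to justify the required equality.
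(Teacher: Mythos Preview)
Your proof is correct and follows essentially the same route as the paper, which derives the theorem directly from \autoref{cor_transl_hilb_K} (equivalently, the coefficient identity $c_\bn(X)=\deg_\PP^{\bm-\bn}(X)$ coming from \autoref{lem_Hilb_poly_mult_free}) together with \autoref{thm_deg_mobius} and the evident anti-isomorphism between $\Gamma$ and $\Psi$. You have simply made explicit the three translations that the paper leaves to the reader.
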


As an application of \autoref{prop:K-poly_mobius}, we obtain an alternative proof of the result of \cite[Theorem 3]{knutson2009frobenius} in the case of multiprojective varieties. 

\subsection{Grothendieck polynomials}
\label{subsect_Groth_poly}
Given an integer $p\geq 1$ and $j\in[p-1]$, consider the following operators: for any polynomial  $f \in \mathbb{Z}[z_1,\ldots,z_p]$, we set
\[\partial_j(f)
:=\frac{f(z_1,\ldots,z_p)-f(z_1,\ldots,z_{j-1},z_{j+1},z_j,z_{j+2},\ldots,z_p)}{z_j-z_{j+1}},\]
and 
$
	\overline{\partial}_j(f):=\partial_j((1-z_{j+1})f).
$

The {\it Grothendieck polynomial} $\mathfrak{G}_w \in \ZZ[z_1,\ldots,z_p]$ of a permutation $w\in S_p$ was originally defined by Lascoux and Sch\"utzenberger in 
\cite{LASCOUX_SCHUTZENGERGER}. 
These polynomials can be computed inductively as: if  $w_0=(p, p-1, \ldots,1)\in S_p$ is the longest permutation, then 
$\mathfrak{G}_{w_0}=z_1^{p-1}z_2^{p-2}\cdots z_{p-1}$. 
For  any $w\neq w_0$ and $j\in [p-1]$ with $w(j)<w(j+1)$, the polynomial $\mathfrak{G}_{w}$ is defined as $\overline{\partial}_j \mathfrak{G}_{s_jw} $, 
where $s_j=(j,j+1)$ is the transposition of the positions $j$-th and $(j+1)$-th. 
The  sum of the terms of lowest degree of $\mathfrak{G}_w$ is the {\it Schubert polynomial} of $w$, and it is denoted by $\mathfrak{S}_w \in \ZZ[z_1,\ldots,z_p]$.

In \cite{KNUTSON_MILLER_SCHUBERT}, Knutson and Miller showed that  $ \mathfrak{G}_{w} $ is equal to $ \mathcal{K}(\overline{X}_w; \zzz)$, where $\overline{X}_w$ is the \emph{matrix Schubert variety} of $w\in S_p $. 
If $\overline{X}_w$ is multiplicity-free, the corresponding $\mathfrak{S}_w$ is said to be a {\it zero-one Schubert polynomial} \cite{FINK_MESZAROS_DIZIER}. 
Thus, \autoref{main} implies the following theorem.

\begin{theorem}\label{thm:grothendieck}
	Let $w \in S_p$  be such that $\mathfrak{S}_w$ is a zero-one Schubert polynomial, then $\supp(\mathfrak{G}_{w})$ is a g-polymatroid. 
\end{theorem}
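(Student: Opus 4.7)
The plan is to deduce \autoref{thm:grothendieck} directly from the main theorem \autoref{main} by combining it with the geometric interpretation of Grothendieck and Schubert polynomials established by Knutson and Miller. The first step is to recall that by \cite[Theorem A]{KNUTSON_MILLER_SCHUBERT}, for any permutation $w \in S_p$ the Grothendieck polynomial $\mathfrak{G}_w$ coincides with the twisted $K$-polynomial $\mathcal{K}(\overline{X}_w;\zzz)$, while the Schubert polynomial $\mathfrak{S}_w$ coincides with the multidegree polynomial $\mathcal{C}(\overline{X}_w;\zzz)$, where $\overline{X}_w$ denotes the matrix Schubert variety of $w$. As noted in the discussion following \autoref{thmB}, the variety $\overline{X}_w$ is naturally realized as a subvariety of the multiprojective space $\PP = \big(\PP_\kk^{p-1}\big)^p$.

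The second step is to translate the zero-one hypothesis into the geometric condition of multiplicity-freeness. By definition (see \cite{FINK_MESZAROS_DIZIER}), saying that $\mathfrak{S}_w$ is a zero-one Schubert polynomial means that all of its coefficients lie in $\{0,1\}$. Since $\mathfrak{S}_w = \mathcal{C}(\overline{X}_w;\zzz)$ and the coefficients of the multidegree polynomial are precisely the top-degree multidegrees $\deg_\PP^\bn(\overline{X}_w)$ with $|\bn|=\dim(\overline{X}_w)$, this is exactly the statement that $\overline{X}_w$ is a multiplicity-free variety in the sense of \autoref{def_multideg}.

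The final step is to apply \autoref{main} to the multiplicity-free variety $\overline{X}_w \subset \PP$. That theorem guarantees that $\supp\left(\mathcal{K}(\overline{X}_w;\zzz)\right)$ is a g-polymatroid. Since $\mathfrak{G}_w = \mathcal{K}(\overline{X}_w;\zzz)$, this is the desired conclusion that $\supp(\mathfrak{G}_w)$ is a g-polymatroid.

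The proof is essentially a packaging of previously established results, so there is no real obstacle once \autoref{main} is in hand; the substantive work has been carried out in the earlier sections establishing \autoref{thm_hsupp_polymatroid} and the combinatorial gluing \autoref{thm_glue_poly}. The only bookkeeping point worth verifying is that the definition of multiplicity-free used throughout the paper agrees with the condition extracted from the zero-one hypothesis, which follows immediately from the Knutson--Miller identification of $\mathfrak{S}_w$ with $\mathcal{C}(\overline{X}_w;\zzz)$.
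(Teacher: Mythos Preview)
Your proposal is correct and follows essentially the same approach as the paper: invoke Knutson--Miller to identify $\mathfrak{G}_w$ with $\mathcal{K}(\overline{X}_w;\zzz)$, observe that the zero-one hypothesis on $\mathfrak{S}_w$ is precisely the statement that $\overline{X}_w$ is multiplicity-free, and then apply \autoref{main}.
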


 \autoref{thm:grothendieck} settles a special case of \cite[Conjecture 5.5]{monical2019newton} that every Grothendieck polynomial has SNP.
Permutations with a zero-one Schubert polynomial were classified by a pattern avoidance condition in \cite{FINK_MESZAROS_DIZIER}.
In the table below we show the number of such permutations in $S_p$ for $p\le 12$.\footnote{These computations were done with the PermPy python library developed by Homberger and Pantone http://jaypantone.com/software/permpy/ } 

\medskip

\begin{center}
	\begin{tabular}{ |c|c|c|c|c|c|c|c|c|c|c|c|c|c|} 
		\hline
		p  & 1 & 2 & 3 & 4 & 5 & 6 & 7 & 8 & 9 & 10 & 11 & 12 \\
		\hline
		\#  & 1 & 2 & 6 & 24 & 115 & 605 & 3343 & 19038 & 110809 & 656200 & 3941742 & 23962510 \\
		\hline
	\end{tabular}
\end{center}

\medskip

The interpretation of Grothendieck polynomials as $K$-polynomials together with \autoref{prop:K-poly_mobius} also settles \cite[Conjecutre 1.5]{meszaros2022support}. 
However, a proof of this latter conjecture is also implicit in the work of Knutson \cite{knutson2009frobenius}. 

\subsection{Linear polymatroids and beyond}

\begin{definition}
Let $V_1,\dots,V_p$ be  $\kk$-subspaces of a finite dimensional $\kk$-vector space $V$.
We define a polymatroid $\mathscr{P}$ as 
\[
\NN^p\cap \left\{  (y_1,\dots,y_p)\in\mathbb{R}^p \;\,\mid\;\, \sum_{j\in \fJ} y_j \leq \dim_\kk\left(\sum_{i\in \fJ} V_j\right) \text{ for all $\fJ \subset [p]$},\;\, \sum_{i=1}^p y_i =  \dim_\kk\left(\sum_{i=1}^p V_i\right)\right\}.
\]
A polymatroid that arises in this way is called {\it linear}.
\end{definition}

The following proposition is an extension of \cite[Proposition 5.4]{POSITIVITY}. Here, 
we show that for a given linear  polymatroid,  there exists a  multiplicity-free variety that realizes it as the multidegree support.

\begin{proposition}\label{prop_linear_polymatroid}
	Given a linear 
	polymatroid $\mathscr{P}$ on $[p]$, there is a multiplicity-free variety $X \subset \PP = \PP_\kk^{m_1} \times_\kk \cdots \times_\kk \PP_\kk^{m_p}$ such that $\mathscr{P} = \msupp_\PP(X)$.
\end{proposition}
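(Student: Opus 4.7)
The plan is to realize $\mathscr{P}$ as the multidegree support of a multiview variety in the sense of Li \cite{li2013images}, thereby extending \cite[Proposition 5.4]{POSITIVITY} from the matroid case to the full polymatroid case. Starting from a linear representation $V_1,\ldots,V_p\subset V$ of $\mathscr{P}$ (so that the rank function of $\mathscr{P}$ is $\fJ\mapsto \dim_\kk(\sum_{j\in\fJ}V_j)$), I would set $m_i:=\dim_\kk(V_i)-1$, take the restriction maps $V^\ast\twoheadrightarrow V_i^\ast$, and let $\varphi:\PP(V^\ast)\dashrightarrow \PP:=\PP_\kk^{m_1}\times_\kk\cdots\times_\kk\PP_\kk^{m_p}$ be the induced rational map (componentwise the projective map attached to each restriction). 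I then define $X$ to be the closure of the image of $\varphi$.

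First I would invoke Li's theorem to conclude that $X$ is multiplicity-free: the multiview variety $X$ admits a flat degeneration to a reduced union of products of coordinate subspaces $\PP_\kk^{a_1}\times_\kk\cdots\times_\kk\PP_\kk^{a_p}$ indexed by the bases of $\mathscr{P}$, so every multidegree of $X$ belongs to $\{0,1\}$ and corresponds to a base of $\mathscr{P}$. This identifies candidate points of $\msupp_\PP(X)$ with bases of $\mathscr{P}$, and in particular shows $\msupp_\PP(X)\subseteq \mathscr{P}$.

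Next I would match the combinatorial data through the projections. For any nonempty $\fJ\subseteq[p]$, the projection $\Pi_\fJ(X)\subset \prod_{j\in\fJ}\PP_\kk^{m_j}$ is itself the multiview variety attached to the subarrangement $\{V_j\}_{j\in\fJ}$, since it is the closure of the image of $\PP(V^\ast)\dashrightarrow\prod_{j\in\fJ}\PP(V_j^\ast)$ induced by $V^\ast\to\bigoplus_{j\in\fJ}V_j^\ast$. The kernel of the latter linear map is the annihilator $\bigl(\sum_{j\in\fJ}V_j\bigr)^\perp$, so the linear image has dimension exactly $\dim_\kk(\sum_{j\in\fJ}V_j)$; this computes $\dim\bigl(\Pi_\fJ(X)\bigr)$ (up to the projective $-1$ normalization). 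Combined with \autoref{thm_pos_multdeg}, the defining inequalities of $\msupp_\PP(X)$ then coincide with the defining inequalities of $\mathscr{P}$, giving the reverse inclusion $\mathscr{P}\subseteq\msupp_\PP(X)$.

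The main delicate step in carrying this out is the bookkeeping that matches the affine rank function of $\mathscr{P}$ with the projective dimensions of the $\Pi_\fJ(X)$: it must be arranged that the total $|\bn|=\dim(X)$ equals $\dim_\kk(\sum_i V_i)$, which is achieved by the standard device of adjoining a common $\kk$-summand to all the $V_i$ (equivalently, passing through a cone/affine chart), and that $\varphi$ is generically injective so no multiplicity factor is introduced. Once these normalizations are in place, Li's multiplicity-freeness combined with \autoref{thm_pos_multdeg} yields $\mathscr{P}=\msupp_\PP(X)$, as desired.
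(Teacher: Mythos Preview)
Your proposal is correct and follows essentially the same approach as the paper: both construct $X$ as the multiview variety associated to the linear representation of $\mathscr{P}$, invoke Li's theorem \cite{li2013images} (the paper also cites \cite{conca2019resolution}) for multiplicity-freeness, and then identify $\msupp_\PP(X)$ with $\mathscr{P}$ by computing $\dim(\Pi_\fJ(X))=\dim_\kk\bigl(\sum_{j\in\fJ}V_j\bigr)$ and applying \autoref{thm_pos_multdeg}. The only cosmetic difference is that the paper builds in the extra $\kk$-summand (your ``standard device'') from the outset via an explicit variable $x_0$ and then computes the projection dimensions algebraically as transcendence degrees of the rings $R_{(\fJ)}$, whereas you phrase the same computation geometrically via the kernel of $V^\ast\to\bigoplus_{j\in\fJ}V_j^\ast$.
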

\begin{proof}
	The proof follows similarly to \cite[Proposition 5.4]{POSITIVITY}.
	Choose a representation of $\mathscr{P}$ given by  a $\kk$-vector space $V$  and $\kk$-vector subspaces $V_1,\ldots,V_p$. 
	Let $S$ be the polynomial ring $S = \Sym(V)[x_0] = \kk[x_0,x_1,\ldots,x_q]$, where $q = \dim_\kk(V)$.
	By using the isomorphism $[S]_1 \cong V \oplus \kk$, we identify each $V_i$ with a $\kk$-subspace $U_i$ of $[S]_1$.
	For each $1 \le i \le p$, let $\{x_{i,1},x_{i,2},\ldots,x_{i,r_i}\} \subset [S]_1$ be a basis of the $\kk$-vector space $U_i$.
	Let $T$ be the $\NN^p$-graded polynomial ring 
	$$
	T := \kk\left[y_{i,j} \mid 1 \le i \le p, \,0 \le j \le r_i,\, \deg(y_{i,j}) = \ee_i\right].
	$$
	Induce an $\NN^p$-grading on $S[t_1,\ldots,t_p]$ given by $\deg(t_i) = \ee_i$ and $\deg(x_j) = \mathbf{0}$.
	Consider the $\NN^p$-graded $\kk$-algebra homomorphism 
	$$
	\varphi = T \rightarrow S[t_1,\ldots,t_p], \qquad 
	\begin{array}{ll}
		y_{i,0} \mapsto  x_0t_i  & \text{ for } \;  1 \le i \le p \\
		y_{i,j} \mapsto  x_{i,j}t_i  & \text{ for } \;  1 \le i \le p,\, 1 \le j \le r_i.
	\end{array}
	$$
	Note that $\fP := \Ker(\varphi) \subset T$ is an $\NN^p$-graded prime ideal. 
	Set $R:= T/\fP$ and $X := \multProj(R)$.
	Notice that $\varphi$ yields a rational map
	$$
	\Phi : \PP_\kk^{q} \,\dashrightarrow\, \PP_\kk^{r_1} \times_\kk \cdots \times_\kk \PP_\kk^{r_p},
	$$
	and that $X \subset \PP_\kk^{r_1} \times_\kk \cdots \times_\kk \PP_\kk^{r_p}$ equals the closure of the image of $\Phi$.
	By either \cite[Theorem~1.1]{li2013images} or  \cite[Theorem~3.9]{conca2019resolution}, we obtain that $X$ is a multiplicity-free variety.
	
	It remains to show the equality $\mathscr{P} = \msupp_\PP(X)$.
	For that we utilize the description given in \autoref{thm_pos_multdeg}.
	By construction, for each $\fJ \subseteq [p]$, we obtain the isomorphism 
	$$
	R_{(\fJ)} \;\cong\; \kk\left[x_{i,j}t_i \mid  i \in \fJ,\, 1 \le j \le r_i\right]\left[x_0t_i\mid i \in \fJ\right] \;\subset\; S[t_1,\ldots,t_p];
	$$
	thus, it is clear that 
	\begin{align*}
		\dim\left(R_{(\fJ)}\right) &= \trdeg_\kk\big(\kk\left[x_{i,j}t_i \mid  i\in \fJ,\, 1 \le j \le r_i\right]\left[x_0t_i\mid i \in \fJ\right]\big)\\
		&= \trdeg_\kk\left(\kk\left[\frac{x_{i,j}}{x_0}\mid  i\in \fJ,\, 1 \le j \le r_i\right]\Big[t_i \mid i \in \fJ\Big]\right)\\
		&= \dim_\kk\left(\sum_{i\in\fJ} U_i\right) + \lvert \fJ \rvert = \dim_\kk\left(\sum_{i\in\fJ} V_i\right) + \lvert \fJ \rvert.
	\end{align*}
	Therefore, we have that $\dim\left(\Pi_{\fJ}(X)\right)= \dim_\kk\left(\sum_{i\in\fJ} V_i\right)$, and so the result follows from \autoref{thm_pos_multdeg}.
\end{proof}

From our methods we obtain the following surprising result for linear polymatroids.

\begin{setup}\label{setup_poset}
Given a polymatroid  $\mathscr{P}\subset \NN^p$, we define the set $\mathscr{P}_\le\subset \NN^p$ and the poset $\Gamma:=\Gamma(\mathscr{P})$ following the same strategy in \autoref{mobius_sec}. As before, $\hat{1}$ denotes the largest element in $\Gamma$. Moreover, we denote by $\hat{0}$ the smallest element of $\Gamma$ and by 
$\mu\text{-supp}(\mathscr{P}):=\left\{\fu \in \mathscr{P}_\leq ~\mid~ \mu_\Gamma(\fu,\hat{1}) \neq 0 \right\}$ its {\it M\"obius support}.
\end{setup} 

\begin{theorem}\label{thm:linear_polymatroid}
Assume \autoref{setup_poset}. 
If  $\mathscr{P}$ is linear, then the M\"obius support  
$\mu\text{\rm-supp}(\mathscr{P})$ 
is a g-polymatroid.
\end{theorem}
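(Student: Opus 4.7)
The plan is to derive \autoref{thm:linear_polymatroid} as a direct translation of the geometric result \autoref{thm_hsupp_polymatroid} via the dictionary established in the previous subsections. Since $\mathscr{P}$ is a linear polymatroid, \autoref{prop_linear_polymatroid} provides a multiplicity-free variety $X \subset \PP = \PP_\kk^{m_1} \times_\kk \cdots \times_\kk \PP_\kk^{m_p}$ such that $\msupp_\PP(X) = \mathscr{P}$. The associated poset $\Gamma = \Gamma(\mathscr{P})$ from \autoref{setup_poset} is then exactly the poset studied in \autoref{mobius_sec}, so the M\"obius function $\mu_\Gamma$ refers to the same object in both places.

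Next I would identify the M\"obius support of $\mathscr{P}$ with the Hilbert support of $X$. By \autoref{thm_deg_mobius}, for every $\bn \in \mathscr{P}_\leq$ we have the equality
\[
\deg_\PP^\bn(X) \;=\; -\mu_\Gamma(\bn, \hat{1}),
\]
so $\bn \in \mathscr{P}_\leq$ belongs to $\mu\text{-supp}(\mathscr{P})$ if and only if $\deg_\PP^\bn(X) \neq 0$. To upgrade this to an equality $\mu\text{-supp}(\mathscr{P}) = \hsupp_\PP(X)$, I need the inclusion $\hsupp_\PP(X) \subseteq \mathscr{P}_\leq$; this is immediate from the stalactite description in \autoref{lem_stalactite_mult_free}, which presents $\hsupp_\PP(X)$ as a union of stalactites hanging below elements of $\msupp_\PP(X) = \mathscr{P}$, so every point of $\hsupp_\PP(X)$ is dominated componentwise by some point of $\mathscr{P}$.

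With the identification $\mu\text{-supp}(\mathscr{P}) = \hsupp_\PP(X)$ in hand, the conclusion follows immediately from \autoref{thm_hsupp_polymatroid}, which asserts that the Hilbert support of a multiplicity-free variety is a g-polymatroid. There is no significant obstacle at this stage, since all the heavy lifting is absorbed into the earlier results: \autoref{prop_linear_polymatroid} supplies the geometric model, \autoref{thm_deg_mobius} supplies the M\"obius-theoretic interpretation of Hilbert coefficients, and \autoref{thm_hsupp_polymatroid} (which in turn relies on the gluing \autoref{thm_glue_poly}) provides the g-polymatroid structure. The proof is therefore essentially a bookkeeping argument assembling these three inputs.
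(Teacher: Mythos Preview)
Your proposal is correct and follows essentially the same approach as the paper: invoke \autoref{prop_linear_polymatroid} to realize $\mathscr{P}$ as $\msupp_\PP(X)$ for a multiplicity-free variety $X$, then combine \autoref{thm_deg_mobius} with \autoref{thm_hsupp_polymatroid}. Your explicit verification that $\hsupp_\PP(X)\subseteq\mathscr{P}_\le$ via \autoref{lem_stalactite_mult_free} is a helpful detail that the paper leaves implicit.
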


\begin{proof}
By  \autoref{prop_linear_polymatroid}, there exists a multiplicity-free variety $X$ such that $\Msupp_\PP(X)=\mathscr{P}$.
Therefore, the result follows by \autoref{thm_deg_mobius} and \autoref{thm_hsupp_polymatroid}.
\end{proof}

We propose the following conjecture whose validity would extend  \autoref{thm:linear_polymatroid} to all polymatroids.

\begin{conjecture}\label{conjecture}
Assume \autoref{setup_poset}. For any {\rm(}base{\rm)} polymatroid $\mathscr{P}$, the M\"obius support $ \mu \text{\rm-supp}(\mathscr{P})$ is a g-polymatroid.
\end{conjecture}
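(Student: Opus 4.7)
The plan is to realize $\mu\text{-supp}(\mathscr{P})$ as a cave in the sense of \autoref{def_caves} and then invoke \autoref{thm_glue_poly}. For any base polymatroid $\mathscr{P}\subset\NN^p$ of rank $r$, let $\bn_1\prec\cdots\prec\bn_d$ be its elements sorted lexicographically, and define
$$
\mathscr{C}(\mathscr{P}) \;:=\; \bigcup_{i=1}^{d}\St\bigl(\bn_i;\{\bn_1,\ldots,\bn_{i-1}\}\bigr),\qquad c_\bn(\mathscr{P}) \;:=\; (-1)^{r-|\bn|}\bigl|\{\,i\colon \bn\in\St(\bn_i;\{\bn_1,\ldots,\bn_{i-1}\})\,\}\bigr|,
$$
imitating the formula of \autoref{lem_stalactite_mult_free}. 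The first task is to prove the combinatorial identity $c_\bn(\mathscr{P})=-\mu_\Gamma(\bn,\hat{1})$ for every $\bn\in\mathscr{P}_\le$. Since the M\"obius function of $\Gamma$ is uniquely characterized by $\sum_{\bw\ge\bn,\,\bw\in\mathscr{P}_\le}(-\mu_\Gamma(\bw,\hat{1}))=1$ for all $\bn\in\mathscr{P}_\le$, it would suffice to verify the analogous telescoping identity $\sum_{\bw\ge\bn,\,\bw\in\mathscr{C}(\mathscr{P})}c_\bw(\mathscr{P})=1$ directly from the stalactite formula. This would yield $\mu\text{-supp}(\mathscr{P})=\mathscr{C}(\mathscr{P})$.

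Once this identification is in hand, it remains to show that $\mathscr{C}(\mathscr{P})$ is a cave. I would proceed by induction on the rank $r$, the base case $r=0$ being trivial. For each $\bb\in\NN^p$ with $\mathscr{C}(\mathscr{P})_\bb\neq\emptyset$, one first verifies that the classical truncation
$$\mathscr{P}^{(\bb)}\;:=\;\{\,\bw-\bb : \bw\in\mathscr{P},\;\bw\ge\bb\,\}$$
is again a base polymatroid of strictly smaller rank $r-|\bb|$, and then establishes the commutation law $\mathscr{C}(\mathscr{P})_\bb=\mathscr{C}(\mathscr{P}^{(\bb)})+\bb$. Axiom (a) of \autoref{def_caves} would then follow because $\mathscr{P}^{(\bb)}$ is a polymatroid, axiom (b) would follow from the very definition of $\mathscr{C}(-)$, and axiom (c) would follow from the inductive hypothesis applied to $\mathscr{P}^{(\bb)}$. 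An application of \autoref{thm_glue_poly} would then complete the proof.

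The principal obstacle is both the order-independence of $\mathscr{C}(\mathscr{P})$ and the commutation law $\mathscr{C}(\mathscr{P})_\bb=\mathscr{C}(\mathscr{P}^{(\bb)})+\bb$; equivalently, one must show that the lex-stalactite construction is compatible with truncations and insensitive to the chosen order on $[p]$. In the algebraic setting these facts are encoded in the shellability of the Stanley-Reisner complex of $\gin_>(\fP)$ (\autoref{thm_gin_mult_free}(iv)) together with \autoref{lemma_Lex}, but for an abstract polymatroid we lack this monomial-ideal machinery. The right intrinsic substitute should be a shelling of the base polytope $\conv(\mathscr{P})$ induced by the lex order, so that the shelling argument of \autoref{lemma_Lex} is upgraded to a purely combinatorial statement driven by the symmetric exchange property of \autoref{prop_symmetric}. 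For ordinary matroids the shellability of matroid base polytopes is classical work of Bj\"orner and Provan-Billera, so the matroid case of the conjecture looks within reach; the genuine polymatroid case appears to require developing the analogous shelling theory for arbitrary base polytopes.
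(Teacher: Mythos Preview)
This statement is presented in the paper as an \emph{open conjecture}; the authors do not give a proof of it in general. So there is no ``paper's own proof'' against which to compare your proposal. What the paper does prove is two special cases: linear polymatroids (\autoref{thm:linear_polymatroid}, via the algebraic machinery of multiplicity-free varieties) and matroids (\autoref{thm_matroid}).

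Your outline is coherent and identifies the right obstruction, but two remarks are in order.

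First, the gap you flag is more pervasive than you indicate. You present the argument as two independent steps: (1) verify the telescoping identity $\sum_{\bw\ge\bn}c_\bw(\mathscr{P})=1$ to get $c_\bn=-\mu_\Gamma(\bn,\hat 1)$, and then (2) verify the cave axioms. But step (1) already requires the commutation law $\mathscr{C}(\mathscr{P})_\bb=\mathscr{C}(\mathscr{P}^{(\bb)})+\bb$. Indeed, unwinding the stalactite sum one finds $\sum_{\bw\ge\bn}c_\bw(\mathscr{P})=\bigl|\{\,i:\bn_i\ge\bn\text{ and }[\bn_i]_\ell=[\bn]_\ell\text{ for all }\ell\in\fI_i\,\}\bigr|$, and showing this cardinality equals $1$ for every $\bn\in\mathscr{P}_\le$ is essentially the commutation law in disguise. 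In the paper the analogous identity (\autoref{prop_mult_free_statements}(iii)) is proved by cutting with generic hyperplanes, which is exactly the geometric incarnation of commutation. So both of your steps hinge on the same unproved lemma.

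Second, for the matroid case the paper takes a much more elementary route than the shelling argument you sketch. The proof of \autoref{thm_matroid} observes that $\mu_\Gamma(\fI,\hat 1)$ is the reduced Euler characteristic of the independence complex of the contraction $\mathscr{M}/\fI$, and that this Euler characteristic vanishes precisely when $\mathscr{M}/\fI$ has a coloop. Since colooplessness is inherited by contractions, one concludes that $\mu\text{-supp}(\mathscr{M})$ is a translate (by the indicator vector of the coloop set) of the independence set of a contraction, which is manifestly a g-polymatroid. This bypasses stalactites and shellings entirely. Your approach, if the commutation law could be established combinatorially, would of course yield the full conjecture and not just the matroid case; but for matroids specifically the paper's argument is both shorter and more transparent.
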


We now show that \autoref{conjecture} is also true for all matroids. First, we briefly review some notions.
A {\it matroid} $\mathscr{M}$ is a  polymatroid such that $[\bu]_i\in\{0,1\}$ for every $\bu\in\mathscr{M}$ and $i\in[p]$.
If we interpret the elements of a matroid $\mathscr{M}$ as subsets of $[p]$, then  $\mathscr{M}_\le$ can be seen as a simplicial complex $\Delta(\mathscr{M})$ with vertex set $[p]$. 
The poset $\Gamma(\mathscr{M})$  is the face poset of the complex (with a greatest element added).
An element $i\in[p]$ is called a \textit{coloop} if it is contained in every facet of $\Delta(\mathscr{M})$.
Given any $\mathfrak{I}\in \mathscr{M}_\le$, we have that $ \text{link}_\mathfrak{I}(\Delta(\mathscr{M})) $ induces a matroid $ \mathscr{M}/\mathfrak{I} $ on the set $ [p]\setminus \mathfrak{I} $, called the \textit{contraction} of $ \mathfrak{I} $.

\begin{theorem}\label{thm_matroid}
	 \autoref{conjecture} holds for matroids.
\end{theorem}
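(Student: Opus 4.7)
The plan is to identify $\mu\text{-supp}(\mathscr{M})$ explicitly and then recognize it as a translate of a standard g-polymatroid. Let $C\subseteq [p]$ denote the set of coloops of $\mathscr{M}$. For any $\mathfrak{I}\in \mathscr{M}_\le$, the standard identification of the M\"obius function of an augmented face poset with a reduced Euler characteristic yields
\[
\mu_\Gamma(\mathfrak{I},\hat{1}) \;=\; -\tilde{\chi}\big(\text{link}_{\mathfrak{I}}\Delta(\mathscr{M})\big) \;=\; -\tilde{\chi}\big(\Delta(\mathscr{M}/\mathfrak{I})\big),
\]
using that the link of a face in the independence complex of a matroid is the independence complex of the contraction. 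Thus $\mathfrak{I}\in \mu\text{-supp}(\mathscr{M})$ if and only if $\tilde{\chi}(\Delta(\mathscr{M}/\mathfrak{I}))\neq 0$.

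Next, a short rank calculation shows that the coloops of $\mathscr{M}/\mathfrak{I}$ are precisely $C\setminus \mathfrak{I}$: for $j\in [p]\setminus \mathfrak{I}$, the element $j$ is a coloop of $\mathscr{M}/\mathfrak{I}$ iff $r_{\mathscr{M}}([p]\setminus\{j\})=r(\mathscr{M})-1$, a condition independent of $\mathfrak{I}$ that says exactly that $j$ is a coloop of $\mathscr{M}$. Combining this with the classical matroid fact that $\tilde{\chi}(\Delta(\mathscr{N}))\neq 0$ iff $\mathscr{N}$ has no coloops---the ``only if'' direction being immediate since any coloop $c$ makes $\Delta(\mathscr{N})$ a cone with apex $c$ (no circuit contains $c$, so $J\cup\{c\}$ is independent whenever $J$ is), while the ``if'' direction follows from the shellability of matroid independence complexes---we obtain the clean characterization
\[
\mu\text{-supp}(\mathscr{M}) \;=\; \{\mathfrak{I}\in \mathscr{M}_\le \,:\, C\subseteq \mathfrak{I}\}.
\]

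Identifying subsets of $[p]$ with their $\{0,1\}$-indicator vectors, this set equals the translate $\mathbf{1}_C+(\mathscr{M}/C)_\le$ inside $\NN^p$, where $(\mathscr{M}/C)_\le\subset \NN^{[p]\setminus C}$ is embedded into $\NN^p$ by zero-padding on the $C$-coordinates. Now $(\mathscr{M}/C)_\le$ is the set of integer points of the matroid independence polytope $\{x\in \mathbb{R}_{\ge 0}^{[p]\setminus C} : x(A)\le r_{\mathscr{M}/C}(A) \text{ for all }A\}$, which is a g-polymatroid polytope by the submodularity of the rank function; hence $(\mathscr{M}/C)_\le$ is itself a g-polymatroid. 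Since translates of g-polymatroids are g-polymatroids by \cite[Theorem 14.2.1]{FRANK}, we conclude that $\mu\text{-supp}(\mathscr{M})$ is a g-polymatroid.

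The main obstacle in this plan is the Euler-characteristic statement for matroid complexes: the cone argument handles the ``only if'' direction trivially, but the ``if'' direction (that a coloop-free matroid has $\tilde{\chi}\neq 0$) requires deeper input. Two natural routes are available---invoke Bj\"orner's shelling of $\Delta(\mathscr{N})$ and exhibit a facet with nonempty restriction precisely when $\mathscr{N}$ has no coloops, forcing the top $h$-number to be positive; or argue inductively by deletion-contraction that $\sum_{\mathfrak{I}\in \mathscr{N}_\le}(-1)^{|\mathfrak{I}|}$ vanishes if and only if $\mathscr{N}$ has a coloop.
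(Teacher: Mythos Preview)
Your proof is correct and follows essentially the same route as the paper's: identify $\mu_\Gamma(\mathfrak{I},\hat{1})$ with the reduced Euler characteristic of $\Delta(\mathscr{M}/\mathfrak{I})$, invoke the coloop criterion for nonvanishing, and conclude that $\mu\text{-supp}(\mathscr{M})$ is the translate by $\mathbf{1}_C$ of the independence set of $\mathscr{M}/C$. The only notable difference is how you handle the coloop structure of contractions: the paper argues by basis exchange that a coloopless matroid has coloopless contractions and then treats the general case separately, whereas your rank computation $r_{\mathscr{M}/\mathfrak{I}}((E\setminus\mathfrak{I})\setminus\{j\})=r_{\mathscr{M}}(E\setminus\{j\})-r_{\mathscr{M}}(\mathfrak{I})$ directly shows that the coloops of $\mathscr{M}/\mathfrak{I}$ are exactly $C\setminus\mathfrak{I}$, which is slightly cleaner and handles both cases at once.
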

\begin{proof}
	By \cite[Proposition 3.8.8]{stanley2012enumerative}, the value of $ \mu_\Gamma(\hat{0},\hat{1})$ equals the (reduced) Euler characteristic of the complex $\Delta(\mathscr{M})$.
	It is known that the Euler characteristic of the independence complex of a matroid is zero if and only if it has a coloop: 
	this follows from \cite[Corollary 7.2.4]{bjorner1992homology} and the determination of the restrictions sets in \cite[Equation 7.9]{bjorner1992homology}.
	More specifically, if $\mathscr{M}$ is coloopless, then in the last lexicographic basis every element can be exchanged (necessarily by a smaller index), hence they are all internally passive.
	
	Next, we prove that if $ \mathscr{M} $ is coloopless, then so are  all  of its contractions  $ \mathscr{M}/\mathfrak{I}$.  
	Assume by contradiction that there exists an element $i \in [p]\setminus \mathfrak{I}$  such that it is contained in every basis of $ \mathscr{M} $ containing $\mathfrak{I}$. 
	Let $\mathfrak{B}$ be a basis of $ \mathscr{M} $ containing $\mathfrak{I}$ and $i$.
	Since $ \mathscr{M} $ is coloopless, there exists a basis $\mathfrak{B}'$ not including $i$. Now, performing  exchange between $\mathfrak{B}$ and  $\mathfrak{B'}$ at index  $i$,  we can find a basis $\mathfrak{B}''$ containing $\mathfrak{I}$ but not $i$, which is a contradiction. Set $\Gamma_{\mathfrak{I}}:=\Gamma\left(\mathscr{M}/\mathfrak{I}\right)$. 
	Since $\mu_{\Gamma_\mathfrak{I}}(\hat{0}, \hat{1})=\mu_{\Gamma}(\mathfrak{I}, \hat{1})$, we conclude that if $ \mathscr{M} $ is coloopless then $ \mu \text{-supp}(\mathscr{M})=\mathscr{M}_\le$
	and so,  in particular, it is a g-polymatroid.
	
	More generally, if there is a nonempty subset $\mathfrak{A}\subset [p]$ of coloops, then by the same reasoning we have that $ \mu(\mathfrak{I},\hat{1}) = 0 $ for every $\mathfrak{I}\in \mathscr{M}_\le$ that does not contain $\mathfrak{A}$. 
	We obtain that  $\mu_\Gamma(\mathfrak{I},\hat{1}) \neq 0$ if and only if $\mathfrak{I}$ is the union of an independent set of the contraction $ \mathscr{M}/\mathfrak{A} $ and $\mathfrak{A}$. 
	Hence, as a point configuration the set $\mu\text{-supp}(\mathscr{M})$ is equal to the set of indicator vectors of the independence sets of $\mathscr{M}/\mathfrak{A}$ each translated by the vector $\mathbf{1}_\mathfrak{A}=\sum_{i\in \mathfrak{A}}\be_i$.
\end{proof}

\begin{remark}
	\autoref{thm_matroid} shows that the M\"obius support of matroids always has exactly one minimum element.
	This does not necessarily hold for an arbitrary polymatroid as seen in our
     running example~\autoref{sec_running_example}.
\end{remark}

In \cite[Conjecture 22]{HuhLogarithmic}, it was conjectured that the normalization of the homogeneous Grothendieck polynomial is Lorentzian.
Motivated by this, we wonder about the Lorentzian property of the following polynomial: for any polymatroid $\mathscr{P}$, we consider the polynomial
\begin{equation*}\label{eq:lorentzian?}
	g_{\mathscr{P}}(w_0,w_1,\ldots,w_p)\;:=\;\sum_{\fn\in\mathscr{P}_\le} -\mu_\Gamma(\fn,\hat{1})(-w_0)^{r(\mathscr{P})-|\fn|}\www^\fn \in \ZZ[w_0,\ldots,w_p].
\end{equation*}
The sign in $-w_0$ is there to ensure that the coefficients are always nonnegative, since $\mu_\Gamma$ alternates in sign. 
Via a similar construction, the proof of Mason's conjecture in \cite[Theorem 4.14]{HUH_BRANDEN} proceeds by associating to each matroid $ \mathscr{M} $ the polynomial
$
	f_{\mathscr{M}}(w_0,w_1,\ldots,w_p) = \sum_{\fn\in\mathscr{M}_\le}w_0^{r(\mathscr{M})-|\fn|}\www^\fn\in \ZZ[w_0,\ldots,w_p]
$	
and proving that $f_{\mathscr{M}}$ is Lorentzian. 
We consider the normalization operator $N : \ZZ[w_0,\ldots,w_p] \rightarrow \ZZ[w_0,\ldots,w_p]$ given by $N(w_0^{a_0}\cdots w_p^{a_p}) = \frac{w_0^{a_0}\cdots w_p^{a_p}}{a_0!\cdots a_p!}$.
Since the support of a Lorentzian polynomial is a polymatroid, in light of \autoref{conjecture},  it is natural to ask the following:

\begin{question}
	 Is $N(g_{\mathscr{P}})$  Lorentzian?
\end{question}

\section*{Acknowledgments}
Most of the research included in this article was developed in the Mathematisches Forschungsinstitut Oberwolfach (MFO) while the authors were in residence at the institute in November-December 2022. 
The authors thank MFO for their hospitality and excellent conditions for conducting research. 
F.C. was partially funded by  FONDECYT Grant 1221133.
Y.C.R. was partially funded by an FWO Postdoctoral Fellowship (1220122N). 
F.M. was partially supported by the grants G0F5921N (Odysseus programme) and G023721N from the Research Foundation - Flanders (FWO), and the grant iBOF/23/064 from KU Leuven. 
J.M. was partially funded by NSF Grant DMS \#2001645/2303605.
The authors thank Alejandro Morales and Jos\'e Samper for helpful conversations.

\bibliography{references.bib}

\end{document}